\theoremstyle{plain}
\newtheorem{theorem}{Theorem}[section]
\newtheorem{proposition}[theorem]{Proposition}
\newtheorem{corollary}[theorem]{Corollary}
\newtheorem{lemma}[theorem]{Lemma}
\theoremstyle{definition}
\newtheorem{definition}[theorem]{Definition}
\newtheorem{example}[theorem]{Example}
\newtheorem{remark}[theorem]{Remark}
\theoremstyle{plain}
\newtheorem*{theorem*}{Theorem}
\newtheorem*{proposition*}{Proposition}
\newtheorem*{corollary*}{Corollary}
\newtheorem*{lemma*}{Lemma}
\theoremstyle{definition}
\newtheorem*{definition*}{Definition}
\newtheorem*{remark*}{Remark}
\newtheorem*{notation*}{Notation}
\newcommand{\CC}{\mathbb{C}}
\newcommand{\PP}{\mathbb{P}}
\newcommand{\ZZ}{\mathbb{Z}}
\newcommand{\OO}{\mathcal{O}}
\newcommand{\UU}{\mathcal{U}}
\newcommand{\FF}{\mathcal{F}}
\newcommand{\EE}{\mathcal{E}}
\newcommand{\LL}{\mathcal{L}}
\newcommand{\KK}{\mathcal{K}}
\newcommand{\II}{\mathcal{I}}
\newcommand{\JJ}{\mathcal{J}}
\newcommand{\pp}{\mathfrak{p}}
\DeclareMathAlphabet{\mathpzc}{OT1}{pzc}{m}{it}
\newcommand{\codim}{\mathrm{codim}}
\newcommand{\ann}{\mathrm{ann}}
\newcommand{\proj}{\mathrm{Proj}}
\newcommand{\ext}{\mathrm{Ext}}
\renewcommand{\hom}{\mathrm{Hom}}
\newcommand{\sing}{\mathrm{Sing}(\omega)}
\newcommand{\per}{\mathpzc{Per}(\omega)}
\newcommand{\kup}{\mathpzc{Kup}(\omega)}
\title{Foliations with persistent singularities}
\author[1]{C\'esar Massri\thanks{The author was fully supported by CONICET, Argentina.}}
\author[2]{Ariel Molinuevo\thanks{The author was fully supported by Universidade Federal do Rio de Janeiro, Brasil.}}
\author[1]{Federico Quallbrunn$^*$}
\affil[1]{\small Departamento de Matem\'atica, Universidad CAECE, Argentina}
\affil[2]{\small Instituto de Matem\'atica, Universidade Federal do Rio de Janeiro, Brazil}
\date{}
\begin{document}

\maketitle

\renewcommand{\thefootnote}{\fnsymbol{footnote}} 
\footnotetext{MSC2020: 14Mxx, 37F75, 32S65, 32G13.}     
\renewcommand{\thefootnote}{\arabic{footnote}}

\begin{abstract}
Let $\omega$ be a differential $q$-form defining a foliation of codimension $q$ in a projective variety.
In this article we study the singular locus of $\omega$ in various settings. We relate a certain type of singularities, which we name \emph{persistent}, with the unfoldings of $\omega$, generalizing previous work done on foliations of codimension $1$ in projective space. We also relate the absence of persistent singularities with the existence of a connection in the sheaf of $1$-forms defining the foliation. 

\end{abstract}

\section*{Introduction}

\subsection*{Motivation and overview of the problem}

Foliations of arbitrary codimension over algebraic varieties have been considered for instance in the works of Malgrange \cite{malgrange, malgrange2} in the local case, and Jouanolou \cite{jou} in a more global approach. Aside from the main result of \cite{malgrange2} and general definitions, most of the early theorems about foliations on projective algebraic varieties have been formulated for codimension $1$ foliations on the projective space $\PP^n$. 
In those articles, codimension $q$ foliations were defined locally by $1$-forms $\omega_1,\dots,\omega_q$ satisfying Frobenius integrability equations: $d\omega_i\wedge\omega_1\wedge\dots\wedge\omega_q=0$ for $i=1,\dots,q$. This definition is not general enough for singular foliations of codimension $q$, as singular foliations by curves in dimension $n\geq 3$ cannot be given by $n-1$ forms even locally, see \cite{deMed1} and Example \ref{exampleCodimension2inA3}. The correct definition is given by a $q$-form verifying the Pl\"ucker relations and Frobenius integrability (see below for definitions).

As for why many results were stated with $\PP^n$ as ambient variety, notice that working in $\PP^n$ allows the use of homogeneous coordinates and so one can define a codimension $1$ foliation with an integrable polynomial $1$-form $\omega=\sum_i f_i(x) dx_i$, that is a $1$-form verifying $\omega\wedge d\omega=0$ and $\sum_i x_i f_i(x)=0$.  Such a setting can give concrete examples of foliations which may be hard to produce and study in more general contexts.

An important problem with many results in the codimension $1$ case and over $\PP^n$ but not so much in arbitrary codimension and over an arbitrary variety is the local and global characterization of the singularities of a foliation. Local results include the main theorems of \cite{malgrange} in codimension $1$ and of \cite{saito} and \cite{malgrange2} in higher codimension. Global studies have been made in the case of logarithmic foliations in \cite{fmi} and in the case of foliations defined by polynomial representations of affine lie algebras in \cite{ocgln} among others.
An important type of singularity of a holomorphic foliation was discovered by Ivan Kupka in \cite{kupka}. A Kupka singularity for an integrable $1$-form $\omega$ is a point $p$ such that $\omega(p)=0$ and $d\omega(p)\neq 0$. Kupka showed that this type of singularity of codimension $1$ foliation is stable, meaning that if $\omega_t$ is a family of integrable $1$-forms parameterized by $t$ and $\omega_0$ has a Kupka singularity then $\omega_t$ also has a Kupka singularity for small enough $t$.
Also if a foliation have a Kupka singularity then there is a codimension $2$ subvariety whose points are singular points of the foliation. 
Kupka singularities were generalized to arbitrary codimension by de~Medeiros in \cite{deMedTesis}, where stability for this singularities is proved in general. In codimension $q$ Kupka singularities come in subvarieties of codimension less or equal than $q+1$. In codimension $1$ there are many results relating the geometry of the variety of Kupka points with the global properties of the foliation, see \emph{e.g.}: \cite{omegar-ivan,omegar-kupka}. In higher codimension there is the work of Calvo-Andrade \cite{omegar2009}.

Another subject we look upon in this work is the study of the unfoldings of a foliation. Unfoldings in the context of foliations were introduced independently by Suwa and Mattei in different contexts, see \cite{suwa-review} for a survey on the subject. Unfoldings of foliations where computed mostly in some codimension $1$ cases, locally by Suwa (see \emph{loc. cit.}) and on $\PP^n$ by Molinuevo in \cite{moli}.

Recently we have related the study of unfoldings and singularities of a codimension $1$ foliation on $\PP^n$. Indeed, in \cite{mmq} we define a homogeneous ideal $I(\omega)$ defining a subscheme of the singular scheme of $\omega$ (see below for precise definitions), the elements of degree equal to the degree of $\omega$ in $I(\omega)$ are in natural correspondence with the infinitesimal unfoldings of $\omega$. Under generic conditions we can prove that if $K(\omega)$ is the ideal defining the closure of the variety of Kupka points then $\sqrt{I(\omega)}=\sqrt{K(\omega)}$, using this result we were able to compute the unfoldings of foliations of codimension $1$ on $\PP^n$ with split tangent sheaf and also prove the existence of Kupka points for every foliation in $\PP^n$ with reduced singular scheme.

\subsection*{Main results}

Our aim in this article is to generalize previous results on the relation of unfoldings and singular points of a foliation to arbitrary codimension and to foliations on a non-singular projective variety. 
In codimension $1$ there is a direct relation between unfoldings and a certain type of singularities which we call persistent singularities. In this respect we prove Proposition \ref{JJcIIcKK} relating Kupka and persistent singularities:
\begin{proposition*}
 Let $\JJ$ be the ideal sheaf of the singular locus of $\omega$, $\KK$ the ideal of the Kupka singularities of $\omega$ and $\II$ the ideal of persistent singularities. Then the following inclusions hold,
 \[
  \JJ\subseteq \II\subseteq \KK.
 \]
\end{proposition*}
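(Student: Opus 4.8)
We prove the two inclusions separately; both concern coherent ideal sheaves on $X$, so it suffices to argue stalk-wise, replacing $X$ by a polydisc with coordinates $x_1,\dots,x_n$ on which $\omega$ is an integrable, locally decomposable germ of $q$-form, which we may assume saturated (so that its singular locus has codimension at least two). The inputs are two: the description of $\II$ in terms of the infinitesimal unfoldings of $\omega$ over $\CC[\varepsilon]/(\varepsilon^2)$ set up above, which generalizes \cite{mmq}; and the local normal form of a Kupka singularity, due to de~Medeiros \cite{deMedTesis} and extending Kupka \cite{kupka}.

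For $\JJ\subseteq\II$, note that $\JJ$ is generated locally by the coefficients of $\omega$ in a coframe, equivalently by the coefficients of the $(q-1)$-forms $\iota_w\omega$ as $w$ ranges over local vector fields. For such a $w$, the pull-back $\Phi^*\omega$ of $\omega$ along $\Phi(x,t)=\phi^w_t(x)$, with $\phi^w_t$ the flow of $w$, is an unfolding of $\omega$ over the parameter $t$ whose associated datum is precisely the $(q-1)$-form $\iota_w\omega$. Since by construction the coefficients of the data of all unfoldings lie in $\II$, letting $w$ run over a local frame shows that every generator of $\JJ$ belongs to $\II$. (If $\II$ is defined outright as $\JJ$ enlarged by the data of the nontrivial unfoldings, this inclusion is tautological.)

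The inclusion $\II\subseteq\KK$ is the essential one: it is the scheme-theoretic form of Kupka's stability theorem, asserting that a Kupka singularity — together with the subvariety it spans — persists under every unfolding. Concretely, we must show that the datum of any unfolding of $\omega$ is a section of $\KK$, i.e.\ vanishes along the closure of the Kupka locus. Localizing at a Kupka point $p$, I would invoke \cite{deMedTesis}: after multiplying $\omega$ by a unit (which changes neither the foliation nor the sheaves $\JJ,\II,\KK$) and a biholomorphic change of coordinates, $\omega=\pi^*\omega_0$, where $\pi$ is the projection germ onto the first $r$ coordinates ($2\le r\le q+1$) and $\omega_0$ is a fixed integrable decomposable germ of $q$-form on $(\CC^r,0)$ with $d\omega_0(0)\neq0$ whose singular scheme near $0$ is the reduced point. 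In these coordinates $\JJ=(x_1,\dots,x_r)$, which is also the local equation of the Kupka locus, so $\KK=(x_1,\dots,x_r)$ near $p$. It then remains to check that every infinitesimal unfolding of $\pi^*\omega_0$ is, modulo a trivial unfolding, the $\pi$-pull-back of an unfolding of $\omega_0$; the datum of such an unfolding depends only on $x_1,\dots,x_r$ and vanishes at $0$, since $\omega_0$ already has an isolated singularity there, hence lies in $(x_1,\dots,x_r)=\KK$. Patching these local inclusions and using that $\KK$ is the ideal of the Kupka closure then yields $\II\subseteq\KK$ globally.

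The step I expect to be the main obstacle is precisely the claim that, near a Kupka point, every unfolding is trivial-plus-$\pi$-pull-back — equivalently, that the transverse directions $x_1,\dots,x_r$ exhaust all the freedom in the data of unfoldings there. Establishing it should require coupling the integrability and Pl\"ucker constraints on an unfolding with the non-degeneracy $d\omega_0(0)\neq0$, and dividing the resulting $(q+1)$-forms by $\omega$ away from the singular set, then extending across it by a Saito-type division lemma \cite{saito} (using that the singular locus has codimension at least two). A final, routine point is the matching of scheme structures: if $\KK$ is taken reduced, the inclusion of supports $V(\KK)\subseteq V(\II)$ already yields $\II\subseteq\sqrt{\KK}=\KK$; otherwise one argues at a general point of each component of the Kupka locus, where the normal form above makes both $\II$ and $\KK$ radical.
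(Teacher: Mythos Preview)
Your route to $\II\subseteq\KK$ is far heavier than the statement warrants, and it leaves unproven precisely the step you flag as the ``main obstacle'', while the paper's argument is a one-line unwinding of the definitions. In the codimension-$1$ setting of this proposition, $\II$ consists locally of those $h$ with $h\,d\varpi=\varpi\wedge\eta$ for some $\eta$ (Remark~\ref{remarkI}), and $\KK$ consists of those $h$ with $h\,d\varpi\in\JJ\cdot\Omega^2_X$ (Definition~\ref{KK2}). But the coefficients of $\varpi$ generate $\JJ$ by definition, so $\varpi\in\JJ\cdot\Omega^1_X$, and therefore $\varpi\wedge\eta\in\JJ\cdot\Omega^2_X$ automatically. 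That is the entire argument: no de~Medeiros normal form, no Saito division, no classification of unfoldings near a Kupka point. Your strategy imports a deep structural theorem to prove something that follows immediately from the annihilator descriptions, and even granting the normal form, the claim that every infinitesimal unfolding of $\pi^*\omega_0$ is trivial-plus-$\pi$-pullback is a genuine lemma you have not established. Finally, your last paragraph only reaches an inclusion of supports, or $\II\subseteq\sqrt{\KK}$ under a radicality hypothesis; the proposition asserts the honest scheme-theoretic inclusion $\II\subseteq\KK$, which your argument does not deliver.

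For $\JJ\subseteq\II$ the paper also argues directly: given $h=i_v\varpi$, contract the integrability relation $\varpi\wedge d\varpi=0$ with $v$ to obtain $h\,d\varpi=\varpi\wedge i_v(d\varpi)$, so $h\in\II$ with $\eta=i_v(d\varpi)$. Your flow construction is not wrong---linearizing $\Phi^*\omega$ in $\varepsilon$ reproduces exactly this identity via the Cartan formula---but the explicit contraction is shorter and avoids any appeal to the unfolding formalism.
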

\noindent and Theorem \ref{teoKnotempty} stating the existence of Kupka points under certain hypotheses:
\begin{theorem*}
 Let $X$ be a projective variety and $\LL\xrightarrow{\omega}\Omega^1_X$ a foliation of codimension $1$ such that $\JJ(\omega)$ is a sheaf of radical ideals and such that $c_1(\LL)\neq 0$ and $H^1(X,\LL)=0$. Then $\omega$ has Kupka singularities.
\end{theorem*}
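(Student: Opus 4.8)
The plan is to argue by contradiction: assuming $\omega$ has no Kupka points, I will build an algebraic connection on the line bundle $\LL$, and then Atiyah's theorem forces the Atiyah class $c_1(\LL)\in H^1(X,\Omega^1_X)$ to vanish, contradicting the hypothesis. Fix an affine cover $\{U_\alpha\}$ trivializing $\LL$, with transition functions $g_{\alpha\beta}\in\OO_X^{\ast}(U_{\alpha\beta})$ and frames $e_\alpha=g_{\alpha\beta}e_\beta$, so that $\omega$ is represented by honest integrable $1$-forms $\omega_\alpha:=\omega(e_\alpha)\in\Omega^1_X(U_\alpha)$ with $\omega_\alpha=g_{\alpha\beta}\omega_\beta$, and $\JJ|_{U_\alpha}$ is generated by the coefficients of $\omega_\alpha$, so $\omega_\alpha\in\JJ\cdot\Omega^1_X(U_\alpha)$. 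Since $\omega$ is a nonzero morphism out of a line bundle it is injective as a sheaf map, so its image $\mathcal{M}\subseteq\Omega^1_X$ is a line subsheaf isomorphic to $\LL$ (locally $\mathcal{M}|_{U_\alpha}=\OO_X\cdot\omega_\alpha$), and because $\codim\sing\geq2$ one has $\ker({-}\wedge\omega_\alpha\colon\Omega^1_X\to\Omega^2_X)=\mathcal{M}|_{U_\alpha}$. On overlaps $d\omega_\alpha$ and $g_{\alpha\beta}\,d\omega_\beta$ differ by $dg_{\alpha\beta}\wedge\omega_\beta\in\JJ\cdot\Omega^2_X$, so the classes $d\omega_\alpha\bmod\JJ\cdot\Omega^2_X$ patch to a global section $\overline{d\omega}$ of the locally free $\OO_{\sing}$-module $(\Omega^2_X\otimes\LL^{\vee})|_{\sing}$, and a point $p\in\sing$ is Kupka precisely when $\overline{d\omega}(p)\neq0$. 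Since $\JJ$ is radical, $\sing$ is reduced, so a section of a locally free sheaf on $\sing$ vanishing at every point is zero; hence the no-Kupka hypothesis gives $d\omega_\alpha\in\JJ\cdot\Omega^2_X(U_\alpha)$ for all $\alpha$.

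The heart of the argument is the following local claim: on each $U_\alpha$ there is $\theta_\alpha\in\Omega^1_X(U_\alpha)$ with $d\omega_\alpha=\theta_\alpha\wedge\omega_\alpha$. Away from $\sing$ this is immediate, as $\omega_\alpha$ is nowhere zero and $d\omega_\alpha\wedge\omega_\alpha=0$; the issue is extension across $\sing$ (trivial when $\sing=\varnothing$). The classical de Rham–Saito division lemma would supply this if $\codim\sing\geq3$, which is not assumed here, so one must handle the obstruction supported on the codimension-two part of $\sing$, exactly where Kupka components would sit; the point is that the extra information $d\omega_\alpha\in\JJ\cdot\Omega^2_X$ obtained above, together with reducedness of $\sing$, is precisely what makes the division go through. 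I expect this local statement to be the main obstacle, and I would prove it by reducing to a generic point of each (reduced, generically codimension-two) component of $\sing$, where one can put $\omega_\alpha$ in a normal form and check that ``$d\omega_\alpha$ vanishes on $\sing$ and $d\omega_\alpha\wedge\omega_\alpha=0$'' already forces $d\omega_\alpha\in\omega_\alpha\wedge\Omega^1_X$.

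Granting this, differentiating $\omega_\alpha=g_{\alpha\beta}\omega_\beta$ and substituting $d\omega_\bullet=\theta_\bullet\wedge\omega_\bullet$ yields $(\theta_\alpha-\theta_\beta-d\log g_{\alpha\beta})\wedge\omega_\beta=0$ on $U_{\alpha\beta}$, so $c_{\alpha\beta}:=\theta_\alpha-\theta_\beta-d\log g_{\alpha\beta}$ is a section of $\mathcal{M}$ over $U_{\alpha\beta}$ by the kernel computation above. The cocycle identity for $g_{\alpha\beta}$ gives $c_{\alpha\beta}+c_{\beta\gamma}=c_{\alpha\gamma}$, so $\{c_{\alpha\beta}\}$ is a \v{C}ech $1$-cocycle with values in $\mathcal{M}\cong\LL$; as $H^1(X,\LL)=0$ it is a coboundary, $c_{\alpha\beta}=s_\beta\omega_\beta-s_\alpha\omega_\alpha$ for some $s_\alpha\in\OO_X(U_\alpha)$. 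Then $\vartheta_\alpha:=\theta_\alpha+s_\alpha\omega_\alpha$ satisfies $\vartheta_\alpha-\vartheta_\beta=d\log g_{\alpha\beta}$, i.e. $\{\vartheta_\alpha\}$ defines an algebraic connection on $\LL$. By Atiyah's theorem the existence of an algebraic connection on a line bundle forces $c_1(\LL)=0$ in $H^1(X,\Omega^1_X)$, contradicting $c_1(\LL)\neq0$. Therefore $\omega$ has Kupka singularities.
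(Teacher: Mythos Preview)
Your proposal is correct and follows essentially the same route as the paper: the paper deduces the theorem from Proposition~\ref{propc1L=0} (which is exactly your \v{C}ech-cocycle/connection argument showing that absence of persistent singularities together with $H^1(X,\LL)=0$ forces $c_1(\LL)=0$) and Theorem~\ref{propP=K} (radical $\JJ$ gives $\per_{\mathrm{red}}=\kup_{\mathrm{red}}$), and your write-up simply unpacks these two ingredients inline into a single contradiction argument.

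Your ``local claim'' is precisely the division statement the paper isolates as Theorem~\ref{teodivision}, quoted from \cite{mmq}. One point worth flagging: that theorem only yields a \emph{formal} $\eta$ at each singular point $\pp$, and the paper passes from this to a regular $\theta_\alpha$ on affine opens via coherence of the annihilator ideal $\II=\ann([d\omega])$ together with faithful flatness of completion (see Remark~\ref{formalremark}); your sketch via generic points and normal forms would instead need to control possible embedded components of $\mathcal{H}^2(\omega)$ to conclude that vanishing at generic points of $\sing$ forces $[d\omega]=0$ globally. Either way, since both you and the paper treat this division step as an external input, the two arguments stand on equal footing.
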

In higher codimension the relation of persistent and Kupka singularities is not so clear, specially in the case where the foliation is not given locally by a complete intersection of $1$-forms, as in Example \ref{exampleCodimension2inA3}. However, under suitable cohomological conditions the absence of persistent singularities impose very strong consequences on the foliation. If $\EE$ is the sheaf of $1$-forms defining the foliation, and if $\ext^1_{\OO_X}(\EE, \mathrm{Sym}^2\EE)=0$ then the absence of persistent singularities implies the existence of a connection on $\EE$, see Theorem \ref{teoConnectionE}:
\begin{theorem*}
 Let $X$ be a projective variety and $\LL\xrightarrow{\omega}\Omega_X^q$ be an integrable $q$-form and $\EE$ be the associated subsheaf of $1$-forms $\EE\subseteq \Omega^1_X$.
 Let $\mathrm{Sym}^2(\EE)$ denote the symmetric power of $\EE$ and suppose $\ext^1_{\OO_X}(\EE,\mathrm{Sym}^2(\EE))=0$. If $\II(\omega)=\OO_X$ then $\EE$ admits a holomorphic connection, in particular is locally free and every Chern class of $\EE$ vanishes.
\end{theorem*}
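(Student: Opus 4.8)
The plan is to construct a holomorphic connection on $\EE$ as a global section of a natural sheaf of ``connections lifting the exterior derivative'', which is a torsor under $\mathcal{H}om_{\OO_X}(\EE,\mathrm{Sym}^2\EE)$: the hypothesis $\II(\omega)=\OO_X$ will make this torsor locally nonempty, and $\ext^1_{\OO_X}(\EE,\mathrm{Sym}^2\EE)=0$ will produce a global section.

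First I would introduce the sheaf $\mathcal{C}$ on $X$ whose sections over $U$ are the $\OO_U$-linear maps $\nabla\colon\EE|_U\to\Omega^1_U\otimes\EE|_U$ satisfying the Leibniz rule and such that the composite $\EE\xrightarrow{\nabla}\Omega^1_X\otimes\EE\xrightarrow{\wedge}\Omega^2_X$, with the second arrow induced by $\EE\subseteq\Omega^1_X$, equals $d|_\EE$. A global section of $\mathcal{C}$ is in particular a holomorphic connection on $\EE$, so it suffices to show $H^0(X,\mathcal{C})\neq\varnothing$. On the open set $X\setminus\sing$, where $\EE$ is locally free of rank $q$ with a local frame $\alpha_1,\dots,\alpha_q$, Frobenius integrability gives $d\alpha_i=\sum_j\theta_{ij}\wedge\alpha_j$ for local $1$-forms $\theta_{ij}$, so $\alpha_i\mapsto\sum_j\theta_{ij}\otimes\alpha_j$ is a local section of $\mathcal{C}$; and the difference of two local sections of $\mathcal{C}$ is an $\OO$-linear map killed by the wedge map, hence by Cartan's lemma applied to the independent $\alpha_j$ it has the form $\alpha_i\mapsto\sum_{j,k}c^i_{jk}\,\alpha_j\otimes\alpha_k$ with $c^i_{jk}=c^i_{kj}$, i.e.\ a section of $\mathcal{H}om_{\OO_X}(\EE,\mathrm{Sym}^2\EE)$. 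Thus $\mathcal{C}$ is a pseudo-torsor under $\mathcal{G}:=\mathcal{H}om_{\OO_X}(\EE,\mathrm{Sym}^2\EE)$, and I would check that this torsor structure extends over all of $X$.

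The heart of the argument is to show $\mathcal{C}_p\neq\varnothing$ at every point $p$, including the points of $\sing$, and this is where $\II(\omega)=\OO_X$ enters. Using the characterization of $\II(\omega)$ and of persistent singularities in terms of (infinitesimal) unfoldings of $\omega$ established in the previous sections (cf.\ Proposition \ref{JJcIIcKK}), I would argue that $1\in\II(\omega)_p$ is equivalent to the local solvability near $p$, for local generators $\alpha_1,\dots,\alpha_m$ of $\EE$, of the equations $d\alpha_i=\sum_j\theta_{ij}\wedge\alpha_j$ with regular $\theta_{ij}$; concretely, the trivial element of $\II(\omega)_p$ corresponds to a locally trivial unfolding of $\omega$, whose defining data yields precisely a local lift $\nabla\in\mathcal{C}_p$. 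Once $\mathcal{C}$ is a genuine nonempty torsor under $\mathcal{G}$, it is classified by a class in $H^1(X,\mathcal{G})$; the five-term exact sequence of the local-to-global $\ext$ spectral sequence gives an injection $H^1(X,\mathcal{G})\hookrightarrow\ext^1_{\OO_X}(\EE,\mathrm{Sym}^2\EE)$, which vanishes by hypothesis. Hence the torsor is trivial, $H^0(X,\mathcal{C})\neq\varnothing$, and $\EE$ admits a holomorphic connection $\nabla$.

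Finally, a coherent sheaf on a smooth variety that admits a connection is locally free (a standard local argument with the Leibniz rule forces the stalks to be free), and a locally free sheaf with a holomorphic connection has vanishing Atiyah class in $\ext^1_{\OO_X}(\EE,\Omega^1_X\otimes\EE)$; since the Chern classes of $\EE$ are polynomial expressions in the Atiyah class (Atiyah), they all vanish. The step I expect to be the main obstacle is the local one: extracting from $\II(\omega)=\OO_X$ the existence of a local connection lifting $d$ at the singular points of the foliation — that is, making precise and proving the dictionary between trivial elements of $\II(\omega)$, trivial infinitesimal unfoldings, and local sections of $\mathcal{C}$ — together with verifying that the $\mathrm{Sym}^2\EE$-torsor structure survives across $\sing$, where a priori $\EE$ is not given by a frame.
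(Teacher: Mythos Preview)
Your overall strategy---lifting $d|_\EE:\EE\to F^1$ through the surjection $\Omega^1_X\otimes\EE\twoheadrightarrow F^1$ with kernel $\mathrm{Sym}^2\EE$---is the same as the paper's, but the torsor packaging introduces a genuine gap at exactly the point you flag: showing $\mathcal{C}_p\neq\varnothing$ at points where $\EE$ is not known to be locally free. The hypothesis $\II(\omega)=\OO_X$ tells you only that $d\varpi$ lands in $F^1$ for every local section $\varpi$ of $\EE$; it does \emph{not} hand you a local lift $\EE\to\Omega^1_X\otimes\EE$. If $\EE_p$ has generators $\varpi_1,\dots,\varpi_m$ subject to relations, writing $d\varpi_i=\sum_j\alpha_{ij}\wedge\omega_j$ and declaring $\nabla\varpi_i=\sum_j\alpha_{ij}\otimes\omega_j$ need not respect those relations (only the image under $\wedge$ of the discrepancy is forced to vanish, not the element of $\Omega^1_X\otimes\EE$ itself). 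Your appeal to the unfolding dictionary of Proposition~\ref{prop} does not help, since that proposition \emph{assumes} local decomposability of $\omega$, which is exactly the ``$\EE$ locally free'' conclusion you are trying to reach; the argument becomes circular.

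The paper sidesteps this by never asking for local lifts. It encodes $d|_\EE$ as an $\OO_X$-linear map $[\nabla]:\mathcal{PE}\to F^1$ on the sheaf of first principal parts and works with the global obstruction $\delta([\nabla])\in\ext^1_{\OO_X}(\mathcal{PE},\mathrm{Sym}^2\EE)$ to lifting it to $\Omega^1_X\otimes\EE$. The key step---absent from your sketch---is the explicit computation that the image of $\delta([\nabla])$ in $\ext^1_{\OO_X}(\Omega^1_X\otimes\EE,\mathrm{Sym}^2\EE)$ vanishes: the restriction of $[\nabla]$ to $\Omega^1_X\otimes\EE\subset\mathcal{PE}$ is nothing but exterior multiplication, so the pulled-back extension splits tautologically. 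The Atiyah sequence $0\to\Omega^1_X\otimes\EE\to\mathcal{PE}\to\EE\to 0$ then forces $\delta([\nabla])$ to come from $\ext^1_{\OO_X}(\EE,\mathrm{Sym}^2\EE)=0$, producing a global lift without ever needing a local one; a second invocation of the same vanishing upgrades the lift to one with symbol the identity, i.e.\ an honest connection. In short: replace your torsor class in $H^1(X,\mathcal{G})$ by the obstruction class in global $\ext^1$ and use the principal-parts sequence to push it into the group assumed to vanish---this is what makes the argument go through at the non--locally-free points.
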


\section{Kupka scheme in the Projective space for codimension 1 foliations}

Along this section we will revisit some definitions that we used in \cite{mmq}, among them we will define the Kupka variety  as a projective scheme $\kup$ over $\PP^n$ and $I=I(\omega)$ 
the \emph{ideal of persistent singularities} (\emph{a.k.a.}
\emph{unfoldings ideal}) of $\omega$. 
Then we will recall some results that we proved in \emph{loc. cit.} that we will generalize later. The scheme $\kup$ and the ideal $I$ were of central importance in those results. 
We refer the reader to \cite{mmq} for a full overview of this subject.

\

With the exception of Theorem \ref{teodivision} through this section we will restrict to the projective space $\PP^n$. So let us denote $S=\CC[x_0,\dots,x_n]$ to the homogeneous coordinate ring of $\PP^n$ and $\Omega^1_{\PP^n}(e)$ the sheaf of twisted differential 1-forms in $\PP^n$ of degree $e$. With $\sing_{set}$ we will denote the (set theoretic) singular set of $\omega\in H^0(\PP^n, \Omega^1_{\PP^n}(e))$ in $\PP^n$,
\[
 \sing_{set} = \{p\in \PP^n: \omega(p) = 0 \}\ .
\]

\

\begin{definition}\label{foliation} Let $\LL\simeq \OO_{\PP^n}(-e)$, $e\geq 2$, be a line bundle and $\omega:\LL\to \Omega^1_{\PP^n}$ be a morphism of sheaves, we will say that $\omega$ defines an
\emph{algebraic foliation of codimension 1} on $\PP^n$, if $\Omega^1_{\PP^n}/\LL$ is torsion free and the morfism is generated by a non zero global section
$\omega\in H^0(\PP^n,\Omega^1_{\PP^n}(e))$ such that
$\omega\wedge d\omega = 0$. 
We recall that such a foliation has \emph{geometric degree} $e-2$, where by geometric degree we mean the degree of annihilation of $\omega$ with a generic line immersed in $\PP^n$.
\end{definition}

\

The condition of $\Omega^1_{\PP^n}/\LL$ to be torsion free in the definition of a foliation is equivalent to ask the singular set to have codimension greater than 2. Indeed, this is the same to ask that $\omega$ is not of the form $f\cdot\omega'$, for some global section $f\in H^0(\PP^n,\OO_{\PP^n}(d))$
and a 1-form $\omega'\in H^0(\PP^n,\Omega^1_{\PP^n}(e-d))$. Also, integrable differential 1-forms define the same foliation up to scalar multiplication. Then, we will denote the set of codimension 1 foliations of geometric degree $e-2$ as
\begin{equation}\label{torsion}
\FF^1(\PP^n,e) := \left\{\omega\in\PP\left(H^0(\PP^n,\Omega^1_{\PP^n}(e))\right):\ \omega\wedge d\omega=0,\ \codim(\sing_{set})\geq 2 \right\}.
\end{equation}

\begin{definition}\label{IJ} We define the graded ideals of $S$ associated to $\omega$ as
\begin{align*}
I(\omega) &:= \left\{ h\in S:\  h\ d\omega = \omega\wedge\eta\text{ for some } \eta\in\Omega^1_S\right\}\\
J(\omega) &:= \left\{ i_X(\omega)\in S:\ X\in T_S \right\}.
\end{align*}
We will name $I(\omega)$ the \emph{ideal of persistent singularities} of $\omega$. We will also denote them $I=I(\omega)$ and $J=J(\omega)$ if no confusion arises.
\end{definition}

\begin{remark}\label{1notinI}
Notice that $1\not\in I$, since the class of $d\omega$ in the Koszul complex of $\omega$, $\mathcal{H}^2(\omega)$ is not zero, see Definition \ref{koszulcomplex}. Also $J(\omega)$ equals the ideal defining the singular locus of $\omega$. This last thing, can be seen by contracting with the vector fields ${\partial}/{\partial x_i}$. The definition given
for $J(\omega)$ is better 
suited for our schematic approach that we will develop next.
\end{remark}

\begin{definition}
For $\omega\in\FF^1(\PP^n,e)$, we define the \emph{Kupka set} as the subset of the singular set
\[
\mathpzc{K}_{set} = \overline{\{p\in \sing_{set} :  d\omega(p)\neq 0 \}}\ .
\]
\end{definition}

\begin{remark}
 Notice that the definition above it is not the standard definition of the Kupka set. Usually it is defined just as the set of points in $\sing_{set}$ such that $d\omega(p)\neq 0$. Instead, we consider the closure of that set.
\end{remark}

\begin{definition}\label{KK} For $\omega\in\FF^1(\PP^n,e)$, we define the \emph{Kupka scheme} $\kup$ as the scheme theoretic support of $d\omega$ at $\Omega^2_{S}\otimes_S 
(S\big/J)$. Then, $\kup=\proj(S/K(\omega))$ where $K(\omega)$ is the homogeneous ideal defined as
\[
K(\omega)=\ann(\overline{d\omega})+J(\omega)\subseteq S,\quad \overline{d\omega}\in \Omega^2_{S}\otimes_S \left(S\big/J(\omega)\right).
\]
We will denote $K=K(\omega)$ if no confusion arises.
\end{definition}

We recall the notion of \emph{ideal quotient} of two $S$-modules $M$ and $N$ as
\[
(N:M) := \left\{a\in S: a.M\subseteq N\right\},
\]
then, one could also define $K(\omega)$ as $K(\omega)=(J\cdot \Omega^2_S: d\omega)$. Also, given that $\Omega^2_S$ is free, we can also write
\begin{equation}\label{Kbis}
K(\omega)=(J(\omega):J(d\omega)),
\end{equation}
where $J(d\omega)$ denotes the ideal generated by the polynomial coefficients of $d\omega$.

\

From the properties of ideal quotient, it follows that if $J$ is radical,
then $K$ is radical as well.

\

With the Example 4.5 in \cite{mmq}[p.~1034] we showed that the algebraic geometric approach is indeed necessary, since the reduced structure associated to the Kupka scheme $\KK$ differs from the reduced variety associated to $\mathpzc{K}_{set}$.  With the following lemma we show that the Kupka scheme and the Kupka set coincide when the singular locus it is radical.

\begin{lemma}(\cite{mmq}[Lemma 4.6, p.1034])\label{K=Kset} 
Let $\omega\in\FF^1(\PP^n,e)$ such that $J=\sqrt{J}$. Then
\[
\kup = \mathpzc{K}_{set}.
\]
\end{lemma}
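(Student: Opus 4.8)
The plan is to prove the set-theoretic equality $\kup = \mathpzc{K}_{set}$ by showing that the zero locus of $K(\omega)$ agrees with $\mathpzc{K}_{set}$ pointwise, exploiting the assumption $J = \sqrt{J}$ together with the identity $K(\omega) = (J(\omega) : J(d\omega))$ from \eqref{Kbis}. First I would establish the easy inclusion $\mathpzc{K}_{set}\subseteq \kup$: since $\kup$ is a \emph{closed} subscheme containing every point $p\in\sing_{set}$ with $d\omega(p)\neq 0$ (at such a point $J(\omega)$ vanishes but $J(d\omega)$ does not, so every element of $(J:J(d\omega))$ must vanish at $p$), and since $\kup$ is closed, it contains the closure $\mathpzc{K}_{set}$ of that set.

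For the reverse inclusion $\kup \subseteq \mathpzc{K}_{set}$, I would argue on an arbitrary point $p$ in the zero locus $V(K(\omega))$. By definition of the ideal quotient, $p\in V(K(\omega))$ means every $a\in S$ with $a\cdot J(d\omega)\subseteq J(\omega)$ vanishes at $p$; equivalently, localizing at $p$, the image of $d\omega$ in $\Omega^2_S\otimes (S/J)$ is not annihilated by any element of $S$ not vanishing at $p$, i.e.\ $\overline{d\omega}$ has nonzero stalk at $p$ in $\Omega^2_X/(J\cdot\Omega^2_X)$. The key step is then to use that $J$ is radical: since $\sing_{set}$ is reduced, near a generic (smooth) point $q$ of any irreducible component of $\sing_{set}$ through $p$, the ideal $J$ is generated by a regular sequence cutting out that component, and the nonvanishing of the stalk of $\overline{d\omega}$ forces $d\omega(q)\neq 0$ for such generic $q$; hence that whole component of $\sing_{set}$ lies in $\overline{\{d\omega\neq 0\}} = \mathpzc{K}_{set}$, and in particular $p\in\mathpzc{K}_{set}$. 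Here one must be slightly careful to separate the components of $\sing_{set}$ on which $d\omega$ vanishes identically from those on which it does not, and to check that a point on the former cannot acquire a nonzero stalk of $\overline{d\omega}$ — which is precisely where the radicality (no embedded or nilpotent structure) of $J$ is used.

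I expect the main obstacle to be this last point: controlling the stalk of $\overline{d\omega}\in\Omega^2_X\otimes(\OO_X/J)$ at a point $p$ lying on several components of $\sing_{set}$, some of which may be contained in $\mathpzc{K}_{set}$ and some not. The clean way around it is to note that $\OO_X/J$, being reduced, embeds into the product of its quotients by minimal primes $\prod_{\pp} \OO_X/\pp$, so the stalk of $\overline{d\omega}$ is nonzero at $p$ iff its image in some $\Omega^2_X\otimes(\OO_X/\pp)$ is nonzero at $p$; but on a reduced irreducible component $V(\pp)$ the latter is nonzero at $p$ exactly when $d\omega$ does not vanish identically on $V(\pp)$, and then $V(\pp)\subseteq\mathpzc{K}_{set}$ by definition of the closure. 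Assembling the two inclusions gives $\kup=\mathpzc{K}_{set}$ as sets; since both sides carry their reduced structure ($K$ is radical because $J$ is, as noted just before the lemma), the scheme-theoretic equality follows, and one may simply cite Lemma~4.6 of \cite{mmq} for the detailed verification.
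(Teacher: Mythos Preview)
The paper you are working from does not actually prove this lemma: it is only recalled here, with a citation to \cite{mmq}[Lemma~4.6]. So there is no ``paper's own proof'' to compare against in this text; you have supplied what the present paper omits.

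Your argument is correct and essentially complete. The easy inclusion $\mathpzc{K}_{set}\subseteq\kup$ follows exactly as you say from $K=(J:J(d\omega))$ and closedness of $\kup$. For the reverse inclusion, your key observation---that for $J$ radical the reduced ring $S/J$ injects into $\prod_{\pp\text{ minimal}} S/\pp$, and hence (since $\Omega^2_S$ is free) the stalk of $\overline{d\omega}$ at $p$ is nonzero iff $d\omega$ fails to vanish identically on some irreducible component of $\sing_{set}$ through $p$---is exactly what is needed, and handles the multi-component issue you flagged cleanly. The final remark that $K$ is radical whenever $J$ is (already noted in the paper just before the lemma) upgrades the set-theoretic equality to a scheme-theoretic one.

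Two small comments. First, your closing sentence ``one may simply cite Lemma~4.6 of \cite{mmq} for the detailed verification'' is unnecessary: your argument already \emph{is} the detailed verification, and deferring to the citation undercuts it. Second, in the reverse inclusion you might make explicit that on an integral component $V(\pp)$ a nonzero element of the free $(S/\pp)$-module $\Omega^2_S\otimes S/\pp$ has support equal to all of $V(\pp)$ (no torsion over a domain), which is the precise reason the stalk condition at $p$ propagates to the generic point and forces $V(\pp)\subseteq\mathpzc{K}_{set}$.
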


We have the following chain of inclusions, see Proposition \ref{JJcIIcKK} and Proposition \ref{JJcIIcKK2} for a generalization, in the codimension one and codimension $q$ case, respectively:

\begin{proposition}(\cite{mmq}[Proposition 4.7, p.~1035])\label{incl2}
Let $\omega\in\FF^1(\PP^n,e)$. Then, we have the following relations
\[
J\subseteq I\subseteq K\ .
\]
\end{proposition}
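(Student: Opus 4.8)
The plan is to derive both inclusions from two equations only: the integrability relation $\omega\wedge d\omega=0$ and the defining equation $h\,d\omega=\omega\wedge\eta$ of $I$. Both will follow by contracting these relations with polynomial vector fields and using the graded Leibniz rule for the interior product, $i_X(\alpha\wedge\beta)=(i_X\alpha)\wedge\beta+(-1)^{\deg\alpha}\,\alpha\wedge(i_X\beta)$.

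For the inclusion $J\subseteq I$, since $J$ is generated by the elements $i_X(\omega)$ with $X\in T_S$ it is enough to show each such generator lies in $I$. Given $h=i_X(\omega)$, I contract the integrability relation with $X$: as $\omega$ has degree $1$ this gives $0=i_X(\omega\wedge d\omega)=i_X(\omega)\,d\omega-\omega\wedge i_X(d\omega)$, hence $h\,d\omega=\omega\wedge\eta$ with $\eta:=i_X(d\omega)\in\Omega^1_S$. Thus $h\in I$.

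For the inclusion $I\subseteq K$, I will use the characterization $K=(J:J(d\omega))$ recorded in \eqref{Kbis}, so it suffices to prove $h\cdot J(d\omega)\subseteq J$ for every $h\in I$. Fix such an $h$ and write $h\,d\omega=\omega\wedge\eta$. Contracting successively with the coordinate vector fields $\partial/\partial x_i$ and $\partial/\partial x_j$, the left-hand side becomes $h$ times a coefficient of $d\omega$ (up to sign), while a double application of the Leibniz rule to $\omega\wedge\eta$ yields
\[
h\cdot i_{\partial/\partial x_j}i_{\partial/\partial x_i}(d\omega)=i_{\partial/\partial x_i}(\omega)\,i_{\partial/\partial x_j}(\eta)-i_{\partial/\partial x_j}(\omega)\,i_{\partial/\partial x_i}(\eta).
\]
Each summand on the right lies in $J$ because $i_{\partial/\partial x_i}(\omega),\,i_{\partial/\partial x_j}(\omega)\in J$ by definition of $J$. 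As $i$ and $j$ vary, the elements $i_{\partial/\partial x_j}i_{\partial/\partial x_i}(d\omega)$ run over (signed) coefficients of $d\omega$ and hence generate $J(d\omega)$; therefore $h\cdot J(d\omega)\subseteq J$, i.e. $h\in K$.

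None of these steps presents a genuine obstacle: the computations are short once the right contractions are chosen. The only points that require a little care are the sign bookkeeping in the iterated Leibniz rule and the remark that the double contractions of $d\omega$ by coordinate fields recover exactly the generators of the ideal $J(d\omega)$. One can alternatively phrase the second inclusion directly through $K=\ann(\overline{d\omega})+J$: the displayed identities show $h\,d\omega\in J\cdot\Omega^2_S$, that is $h\,\overline{d\omega}=0$ in $\Omega^2_S\otimes_S(S/J)$, so $h\in\ann(\overline{d\omega})\subseteq K$.
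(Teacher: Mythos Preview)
Your proof is correct and follows essentially the same approach as the paper's generalization in Proposition~\ref{JJcIIcKK}: the first inclusion is obtained identically by contracting $\omega\wedge d\omega=0$ with a vector field, and for the second the paper simply observes that $\omega\in J\cdot\Omega^1_S$ (its coefficients generate $J$), so $h\,d\omega=\omega\wedge\eta\in J\cdot\Omega^2_S$ immediately, which is exactly the alternative phrasing you give at the end. Your double-contraction argument is just an explicit unpacking of this same observation.
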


Let $\mathfrak{p}$ be a point in $\PP^n$, \emph{e.g.}, an homogeneous prime ideal in $S$ different from the \emph{irrelevant ideal} $(x_0.\ldots,x_n)$, and let $\omega$ be an integrable differential 1-form. We will denote with a subscript $\mathfrak{p}$ the localization at the point $\mathfrak{p}$ and with $\widehat{S}_\mathfrak{p}$ the completion of the local ring $S_\mathfrak{p}$ with respect to the maximal ideal defined by $\mathfrak{p}$.

\begin{definition} We say that $\mathfrak{p}\in\PP^n$ is a \emph{division point of $\omega$} if $1\in I(\omega)_\mathfrak{p}$.
\end{definition}

We now define a subset of the moduli space of foliations on which we are going to state our next result.

\begin{definition}\label{generic} We define the set $\UU\subseteq\FF^1(\PP^n,e)$ as
\[
\UU = \left\{\omega\in\FF^1(\PP^n,e): \ \forall \mathfrak{p}\not\in\kup,\,\mathfrak{p}\text{ is a division point of }\omega\right\}.
\]
\end{definition}

See Theorem \ref{propP=K} for a generalization of the following:

\begin{theorem}(\cite{mmq}[Theorem 4.12, p.~1036])\label{teo1}
Let $\omega\in\mathcal{U}\subseteq\FF^1(\PP^n,e)$. Then,
\[
\sqrt{I}=\sqrt{K}.
\]
Furthermore, if $\sqrt{I}=\sqrt{K}$ then $\omega\in\UU$.
\end{theorem}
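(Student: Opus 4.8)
The plan is to prove the two implications separately, relying on the relation $I\subseteq K$ from Proposition \ref{incl2} and on a local-to-global argument that localizes the defining equations of $I$ and $K$ at homogeneous primes $\mathfrak{p}$. First I would record the key observation that, since ideal quotient and the formation of $I$, $J$, $K$ all commute with localization at a homogeneous prime, one has $I_\mathfrak{p}=I(\omega_\mathfrak{p})$, $K_\mathfrak{p}=K(\omega_\mathfrak{p})$, and moreover $\sqrt{I}_\mathfrak{p}=\sqrt{I_\mathfrak{p}}$, so that $\mathfrak{p}\supseteq\sqrt{I}$ if and only if $I_\mathfrak{p}$ is not contained in $\mathfrak{p}S_\mathfrak{p}$, i.e. if and only if $1\in I_\mathfrak{p}$, which is exactly the condition that $\mathfrak{p}$ is a division point. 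The analogous statement holds for $K$: $\mathfrak{p}\supseteq\sqrt{K}$ iff $1\notin K_\mathfrak{p}$ iff $\overline{d\omega}$ localizes to $0$ in $\Omega^2_{S_\mathfrak{p}}\otimes(S_\mathfrak{p}/J_\mathfrak{p})$, which (by Lemma \ref{K=Kset}, or directly) says $\mathfrak{p}\notin\kup$. So the geometric content is: $V(\sqrt{I})$ is the set of non-division points, and $V(\sqrt{K})=\kup$.

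For the forward implication, assume $\omega\in\UU$. Since $I\subseteq K$ we get $\sqrt{I}\subseteq\sqrt{K}$ for free, so it remains to show $\sqrt{K}\subseteq\sqrt{I}$, equivalently $V(\sqrt I)\subseteq V(\sqrt K)=\kup$. Take $\mathfrak{p}\in V(\sqrt I)$, i.e. $\mathfrak{p}$ is not a division point; by the definition of $\UU$, every $\mathfrak{p}\notin\kup$ \emph{is} a division point, so contrapositively $\mathfrak{p}\in\kup$. Hence $V(\sqrt I)\subseteq\kup=V(\sqrt K)$, giving $\sqrt K\subseteq\sqrt I$ and therefore $\sqrt I=\sqrt K$.

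For the converse, assume $\sqrt I=\sqrt K$ and let $\mathfrak{p}\notin\kup$; we must show $\mathfrak{p}$ is a division point, i.e. $1\in I_\mathfrak{p}$. Since $\mathfrak{p}\notin\kup=V(\sqrt K)$, there is $f\in\sqrt K$ with $f\notin\mathfrak{p}$; but $\sqrt K=\sqrt I$, so $f^m\in I$ for some $m\geq 1$, and $f^m\notin\mathfrak{p}$ since $\mathfrak{p}$ is prime. Localizing, $f^m$ becomes a unit in $S_\mathfrak{p}$, so $1\in I_\mathfrak{p}$, as desired. Thus $\omega\in\UU$.

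The only genuinely non-formal step is the compatibility of all the constructions with localization — in particular that forming $I(\omega)=\{h: h\,d\omega=\omega\wedge\eta\}$ commutes with localization at $\mathfrak{p}$ (this uses that $\Omega^1_S$ and $\Omega^2_S$ are finitely generated free, so solving the linear equation $h\,d\omega=\omega\wedge\eta$ over $S_\mathfrak{p}$ is the localization of solving it over $S$), and likewise for $K(\omega)=(J(\omega):J(d\omega))$ (ideal quotients of finitely generated modules commute with flat base change). I expect this localization bookkeeping to be the main obstacle, or at least the part requiring the most care; the geometric dictionary and the two implications themselves are then immediate. Everything here is taken verbatim from \cite{mmq}; we include it only because the generalizations in Theorem \ref{propP=K} follow the same pattern.
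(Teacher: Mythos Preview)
The paper does not supply a proof of this statement; it is quoted from \cite{mmq}, so there is nothing here to compare your argument against directly. Your argument is correct and is precisely the expected one: once one has $I\subseteq K$ from Proposition~\ref{incl2} and knows that the formation of $I$ and $K$ localizes, both implications are tautological restatements of the definition of $\UU$. (The same pointwise dictionary is what drives the proof of the generalization, Theorem~\ref{propP=K}.)

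One caveat: the chain of equivalences in your first paragraph has its signs reversed in places. You write ``$\mathfrak{p}\supseteq\sqrt{I}$ if and only if $\dots$ $1\in I_\mathfrak{p}$'', but of course $\mathfrak{p}\supseteq I$ iff $I_\mathfrak{p}\subseteq\mathfrak{p}S_\mathfrak{p}$ iff $1\notin I_\mathfrak{p}$; likewise the $K$-chain you write ends at ``$\mathfrak{p}\notin\kup$'' when it should read ``$\mathfrak{p}\in\kup$'' (since $\mathfrak{p}\supseteq K$ is exactly the condition that $\mathfrak{p}$ lies in $\kup=\proj(S/K)$). Your summary sentence --- ``$V(\sqrt{I})$ is the set of non-division points, and $V(\sqrt{K})=\kup$'' --- and the two implications that follow are stated with the correct orientation, so the proof itself stands; just repair the setup paragraph so that it is internally consistent with that summary.
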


See Theorem \ref{teoKnotempty} for a generalization of the following:

\begin{theorem}(\cite{mmq}[Theorem 4.24, p.~1041])\label{teo3}
Let $\omega\in\FF^1(\PP^n,e)$ such that $J=\sqrt{J}$.
Then
\[
\kup=\mathpzc{K}_{set}\neq \emptyset.
\]
\end{theorem}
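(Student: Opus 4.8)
The plan is to combine Lemma \ref{K=Kset}, which identifies $\kup$ with $\mathpzc{K}_{set}$ under the radicality hypothesis on $J$, with a purely cohomological non-vanishing argument to show $\kup\neq\emptyset$. Since $J=\sqrt{J}$, Lemma \ref{K=Kset} gives $\kup=\mathpzc{K}_{set}$ immediately, so the entire content is proving that the Kupka scheme is nonempty, equivalently that $K(\omega)=(J(\omega):J(d\omega))$ is not the irrelevant ideal (nor all of $S$, which is ruled out anyway by $\overline{d\omega}\neq 0$ in $\Omega^2_S\otimes S/J$, cf. Remark \ref{1notinI}). Suppose for contradiction that $\kup=\emptyset$. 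Then the support of the coherent sheaf associated to $\Omega^2_S\otimes S/J$ (the support of $\overline{d\omega}$) is empty, which means $d\omega$ vanishes modulo $J$ on all of $\PP^n$, i.e. every coefficient of $d\omega$ lies in $J$ up to the irrelevant ideal; more precisely $K(\omega)$ is $(x_0,\dots,x_n)$-primary or all of $S$.

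The key step is then to derive a contradiction from $d\omega\in J\cdot\Omega^2_S$ away from a finite set. Geometrically this says: at every point $p$ of the singular set $\sing_{set}$ (which has codimension $\geq 2$ by the definition of $\FF^1(\PP^n,e)$) we would have $d\omega(p)=0$, except possibly at finitely many points. But $\sing_{set}$ is a projective scheme of codimension $\leq 2$, hence of dimension $\geq n-2\geq 1$ (using $n\geq 2$; note the foliation is non-trivial so $\sing_{set}$ is nonempty of the expected codimension $2$). So $d\omega$ vanishes on a subvariety of positive dimension inside $\sing_{set}$. I would then examine the Koszul-type complex of $\omega$ (Definition \ref{koszulcomplex}, referenced in Remark \ref{1notinI}): the hypothesis that $\codim(\sing_{set})\geq 2$ means $\omega$ is a regular section up to codimension $2$, so the Koszul complex $\OO(-e)\xrightarrow{\omega}\Omega^1\to\Omega^2\to\cdots$ is exact far enough that $\mathcal{H}^2(\omega)$, the cohomology at $\Omega^2$, is supported in codimension $\geq 2$ and — crucially — $\overline{d\omega}$ represents a nonzero class there. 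If $d\omega$ vanished on all of $\sing_{set}$ modulo the structure sheaf of $\sing_{set}$, one can localize at a minimal prime $\pp$ of $J$ of codimension exactly $2$; there $\omega_\pp$ is given by two generic coordinates (the local ring is regular of dimension $2$ after localizing), $d\omega_\pp\in J_\pp\cdot\Omega^2_{S_\pp}=\pp_\pp\Omega^2$, and I claim this forces $\overline{d\omega}=0$ in $\mathcal{H}^2(\omega)_\pp$, contradicting Remark \ref{1notinI}.

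The main obstacle I anticipate is making the localization-at-a-codimension-2-prime argument airtight: one needs to know that a minimal prime of $J$ of codimension exactly $2$ exists (this is where $J=\sqrt{J}$ and $\codim(\sing_{set})=2$ — not just $\geq 2$ — must be used; if every component of $\sing_{set}$ had codimension $\geq 3$ then $\Omega^1/\LL$ would have even better depth and one has to rule this out, perhaps via a Chern class / first Chern class computation showing $\sing_{set}$ cannot be too small since $\omega$ has geometric degree $e-2$ and $e\geq 2$), and that after localizing and completing, the structure of $\omega_\pp$ as a regular sequence of two $1$-forms gives $\mathcal{H}^2(\omega)_\pp=S_\pp/J_\pp$ up to a twist, so that $\overline{d\omega}\neq 0$ literally means $d\omega\notin J_\pp\Omega^2_{S_\pp}$, which is exactly the negation of $\pp\in\kup$. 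Assembling this local picture into the global statement $\kup\neq\emptyset$ should then be formal. I would also double-check the edge case $n=2$ separately, where $\sing_{set}$ is $0$-dimensional and the argument above via positive-dimensional subvarieties needs adjustment — there one argues directly that a codimension-$2$ (i.e. a point) component of the singular scheme of a nonzero integrable $1$-form on $\PP^2$ is automatically Kupka when the singular scheme is reduced.
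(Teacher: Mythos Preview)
Your localization argument has a genuine gap. You assert that at a codimension-$2$ minimal prime $\pp$ of $J$ one has $\mathcal{H}^2(\omega)_\pp\cong S_\pp/J_\pp$, so that $d\omega_\pp\in J_\pp\,\Omega^2_{S,\pp}$ would directly force $[d\omega]_\pp=0$. But $\mathcal{H}^2(\omega)$ is $\ker(\omega\wedge-)\big/\omega\wedge\Omega^1$, not $\Omega^2/J\Omega^2$; since $\omega\wedge\Omega^1\subseteq J\Omega^2$ (the coefficients of $\omega\wedge\alpha$ lie in $J$ for every $\alpha$), knowing $d\omega\in J\Omega^2$ does \emph{not} yield $d\omega\in\omega\wedge\Omega^1$. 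Passing from the first containment to the second is precisely the content of the division theorem, Theorem~\ref{teodivision}, which uses the radicality hypothesis and only produces a \emph{formal} $\eta$ with $d\omega=\omega\wedge\eta$. You never invoke it, and the identification you posit is unjustified once $n\ge 3$ (after localizing at a height-$2$ prime, $\Omega^1_{S,\pp}$ still has rank $n+1$, so the degree-$2$ Koszul cohomology is not one-dimensional over $S_\pp/J_\pp$). The side discussions---whether a component of codimension exactly $2$ exists, the separate case $n=2$, positive-dimensionality of $\sing$---are red herrings.

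The paper proves the generalization (Theorem~\ref{teoKnotempty}, which specializes to the present statement) along a different line. First, Theorem~\ref{propP=K}---whose proof \emph{does} apply Theorem~\ref{teodivision} at every point outside $\kup$---shows that when $\JJ$ is radical the reduced persistent and Kupka loci agree. Second, Proposition~\ref{propc1L=0} is a global \v{C}ech argument: if there were no persistent singularities (i.e.\ $\II=\OO_X$) and $H^1(X,\LL)=0$, then $c_1(\LL)=0$. For $X=\PP^n$ and $\LL=\OO_{\PP^n}(-e)$ with $e\ge 2$ one has $H^1(\PP^n,\OO(-e))=0$ and $c_1(\OO(-e))\ne 0$, so persistent---hence Kupka---singularities must exist. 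Thus the Chern-class obstruction is the main global input, not the afterthought you make of it.
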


\

The following statement is valid in a non-singular variety $X$ and we will use it later. We will consider a $1$-form $\omega$ on $X$ with singular set of codimension equal or greater than 2. And we will denote with $\mathcal{J}$ the ideal sheaf of $\sing$.


\begin{theorem}(\cite{mmq}[Theorem 2.7, p.~1030])\label{teodivision}
  Let $\omega$ be an integrable $1$-form in a non-singular variety $X$ and let $\pp\in\sing$ be such that $\JJ_\pp$ is radical and such that $d\omega_\pp\in \JJ_\pp \cdot \Omega^2_{X,\pp}$. Then there is a formal $1$-form $\eta$ such that $d\omega=\omega\wedge\eta$.
\end{theorem}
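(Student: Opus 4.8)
Since the statement is local at $\pp$, I would pass at once to the completed local ring $R:=\widehat{\OO}_{X,\pp}$. As $X$ is non-singular, $R$ is a complete regular local $\CC$-algebra, so $R\cong\CC[[x_1,\dots,x_n]]$ and the module of formal K\"ahler differentials $\widehat\Omega^1_R$ is free of rank $n$; write $\omega=\sum_i f_i\,dx_i$. By Remark \ref{1notinI} the ideal $\JJ:=\JJ_\pp R$ equals $(f_1,\dots,f_n)$, so in particular $\omega\in\JJ\,\widehat\Omega^1_R$ and, by hypothesis, $d\omega\in\JJ\,\widehat\Omega^2_R$; moreover $\JJ$ is radical of height $\geq 2$. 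The goal is to find $\eta\in\widehat\Omega^1_R$ with $\omega\wedge\eta=d\omega$.

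Consider the Koszul--de Rham cochain complex
\[
 R\xrightarrow{\ \omega\wedge\ }\widehat\Omega^1_R\xrightarrow{\ \omega\wedge\ }\widehat\Omega^2_R\xrightarrow{\ \omega\wedge\ }\widehat\Omega^3_R\longrightarrow\cdots,
\]
whose cohomology in degree $i$ is, by Koszul self-duality, $H_{n-i}(K_\bullet(f_1,\dots,f_n;R))$, hence is annihilated by $\JJ$ and vanishes for $i<\mathrm{grade}(\JJ,R)$. Integrability, $\omega\wedge d\omega=0$, says $d\omega$ is a $2$-cocycle, and the theorem is exactly the vanishing of its class $[d\omega]$ in the degree-$2$ cohomology $H$ of this complex. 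If $\mathrm{grade}(\JJ,R)\geq 3$ then $H=0$ and we are done; the real content is the pure-codimension-$2$ case $\mathrm{grade}(\JJ,R)=2$, which I treat next. Since $\omega\wedge\widehat\Omega^1_R\subseteq\JJ\,\widehat\Omega^2_R$, reduction modulo $\JJ$ induces $H\to\widehat\Omega^2_R\otimes_R(R/\JJ)$ carrying $[d\omega]$ to $\overline{d\omega}=0$; so it suffices to prove that this map is injective, i.e.
\[
 \ker\!\bigl(\widehat\Omega^2_R\xrightarrow{\omega\wedge}\widehat\Omega^3_R\bigr)\cap\JJ\,\widehat\Omega^2_R=\omega\wedge\widehat\Omega^1_R .
\]

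I would establish this equality (only the inclusion $\subseteq$ needs proof) by localizing. Away from $\sing=V(\JJ)$ it is the classical de Rham lemma, since there $\omega$ is a non-vanishing $1$-form. At a minimal prime $\pp'$ of $\JJ$, the ring $R_{\pp'}$ is regular local and, because $\JJ$ is \emph{radical}, $\JJ_{\pp'}$ is its whole maximal ideal $\mathfrak m'$; thus $f_1,\dots,f_n$ generate $\mathfrak m'$ and $\omega_{\pp'}$ has an isolated singularity at the smooth point $\pp'$. If $\dim R_{\pp'}\geq 3$ then $H_{\pp'}=0$; if $\dim R_{\pp'}=2$, writing $\mathfrak m'=(a,b)$ and viewing the remaining $f_i$ as redundant generators, one computes $H_\bullet\bigl(K_\bullet(f_1,\dots,f_n;R_{\pp'})\bigr)\cong\textstyle\bigwedge^\bullet\!\bigl(\kappa(\pp')^{\,n-2}\bigr)$, so $H_{\pp'}\cong H_{n-2}\cong\kappa(\pp')$ is one-dimensional. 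As $\mathfrak m'$ annihilates $H_{\pp'}$ and every coboundary has coefficients in $\mathfrak m'$, while a generator of $H_{\pp'}$ is represented by a cocycle with a unit coefficient, every $2$-cocycle with coefficients in $\mathfrak m'$ is a coboundary over $R_{\pp'}$; this gives the displayed equality after localizing at each minimal prime of $\JJ$. Finally, the degree-$2$ cohomology module $H$ is naturally $\mathrm{Ext}^2_R(R/\JJ,R)$ (the lowest non-vanishing Koszul cohomology of $\mathbf f$ agrees with the lowest $\mathrm{Ext}$ of $R/\JJ$), and over the regular ring $R$, with $\JJ$ reduced, this module has no associated primes other than the codimension-two components of $\sing$; hence a class annihilated by $\JJ$ that vanishes at all those components — such as $[d\omega]$ — is zero. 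This yields the required (formal) $1$-form $\eta$.

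The step I expect to be the main obstacle is the two-dimensional local computation together with the unmixedness of $H\cong\mathrm{Ext}^2_R(R/\JJ,R)$: codimension $2$ by itself does not make this module vanish, and both the identification $H_{\pp'}\cong\kappa(\pp')$ and the absence of embedded primes rely essentially on the reducedness hypothesis — without it the singular scheme carries multiplicity at generic points and the argument breaks. An alternative to the final gluing is to build $\eta$ by successive approximation along the $\mathfrak m$-adic filtration: at each stage the error $d\omega-\omega\wedge\eta^{(k)}$ is again a $2$-cocycle lying in a higher power of $\JJ$, which one divides using the local statement above, the limit being a genuinely formal solution — in accordance with the wording of the theorem.
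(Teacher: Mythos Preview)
The paper does not prove this statement; Theorem~\ref{teodivision} is imported from \cite{mmq} and used as a black box in Theorem~\ref{propP=K}. There is therefore no in-paper proof to compare against, so I assess your argument on its own.

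Your strategy is correct and nearly complete. The reduction to the vanishing of $[d\omega]$ in the Koszul cohomology $H^2$, the trivial case $\mathrm{grade}(\JJ)\ge 3$, the identification $H^2\cong\mathrm{Ext}^2_R(R/\JJ,R)$ at the bottom of the range, and the two-dimensional local calculation at a codimension-$2$ minimal prime $\pp'$ (radicality gives $\JJ_{\pp'}=\mathfrak m'$, so $H^2_{\pp'}\cong\kappa(\pp')$, and a generator is represented by $\theta_1\wedge\theta_2$ when $\omega=a\,\theta_1+b\,\theta_2$ for parameters $a,b$, visibly with a unit coefficient) are all sound.

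The only point that needs a genuine argument, and which you yourself flag, is the unmixedness of $\mathrm{Ext}^2_R(R/\JJ,R)$. You say this ``relies essentially on the reducedness hypothesis'', but that is misleading: it holds for \emph{any} ideal of grade $2$. If $\qq\supset\JJ$ has codimension $c>2$, local duality gives that $\mathrm{Ext}^2_{R_\qq}((R/\JJ)_\qq,R_\qq)$ is Matlis-dual to $H^{c-2}_{\mathfrak m_\qq}((R/\JJ)_\qq)$; either $(R/\JJ)_\qq$ has dimension $<c-2$ and this vanishes, or it has dimension exactly $c-2\ge 1$, and the top local cohomology of any local ring of positive dimension is divisible by its maximal ideal (immediate from the \v{C}ech description: every class $b/(x_1\cdots x_d)^n$ is $x_1$ times $b/(x_1^{n+1}x_2^n\cdots x_d^n)$), so $\kappa(\qq)$ is not a quotient and $\qq\notin\mathrm{Ass}$. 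Radicality is used only at the codimension-$2$ generic points, to force $\JJ_{\pp'}=\mathfrak m'$ and run your local computation. With this paragraph supplied, your proof is complete; the successive-approximation alternative would also work but needs a comparable local input at each stage.
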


\section{Unfoldings over schemes}\label{unf-schemes}

Along this section we will give the definition of \emph{codimension $q$ foliation} on a smooth variety $X$. Then we will redefine the \emph{singular locus} with a scheme theoretic approach. Finally we define an \emph{unfolding} of a codimension $q$ foliation.

\

If $\Xi \in \Gamma(U, \bigwedge^p TX)$ is a multivector and $\varpi\in \Gamma(U,\Omega^q_X)$ a $q$-form we will denote by $i_\Xi \varpi \in \Gamma(U,\Omega^{q-p}_X)$ the contraction.
Recall that the \emph{Pl\"ucker relations} for $\varpi$ are given by
\[
 i_\Xi \varpi\wedge \varpi=0
\]
for any $\Xi\in\bigwedge^{q-1} TX$.

When $\varpi(\pp)\neq 0$ for some closed point $\pp\in X$ then $\varpi$ is \emph{locally decomposable} as a product $\varpi=\varpi_1\wedge\dots\wedge\varpi_q$ of $q$ $1$-forms.

\begin{definition}
Let $\LL$ be a line bundle and $\omega:\LL\to \Omega^q_X$, with $1\leq q\leq dim(X)-1$, be a (non trivial) morphism of sheaves, we will say that the morphism is \emph{integrable} if 
\begin{itemize}
\item $\Omega^q_X/\LL$ is torsion free.
\item The map 
\[
 i_\Xi \omega\wedge \omega:\LL\to\Omega^{q+1}_X\otimes\LL^{-1}
\]
is zero for every local section $\Xi$ of $\bigwedge^{q-1} TX$.
\item For every local section $s$ of $\LL$ and $\Xi$ of $\bigwedge^{q-1} TX$,
$\omega(s)$ verifies
\begin{equation}\label{frobenius2}
 d (i_\Xi \omega(s))\wedge \omega(s)=0.	
\end{equation}
\end{itemize}

We also say that $\omega$ determines a \emph{codimension $q$ foliation}.
\end{definition}

\begin{remark}
 By using Equation (\ref{frobenius2}) with $q=1$ we recover the definition of codimension one foliation as in Definition \ref{foliation}. 
\end{remark}

\begin{remark}
If $\omega(s)$ is locally decomposable for every $s\in X$ as a product of $q$ 1-forms $\varpi_1(s),\dots,\varpi_q(s)$ then there exist a rank $q$ vector bundle 
$\mathcal{E}\hookrightarrow \Omega^1_X$, locally generated by $\varpi_1(s),\dots,\varpi_q(s)$ 
and  such that $\LL\simeq \bigwedge^q\mathcal{E}$. 
 Reciprocally, given a locally free sheaf of rank $q$, $\mathcal{E}$ and a map $\mathcal{E}\hookrightarrow \Omega^1_X$, we have that $\bigwedge^q\mathcal{E}$ is a line bundle $\LL$ and a map $\LL\to\Omega^q_X$. The condition that $\Omega^q_X/\LL$ is torsion free is equivalent to $\Omega^1_X/\mathcal{E}$ being torsion free. 
Example \ref{exampleCodimension2inA3} shows that the condition \emph{locally free} is necessary for this equivalence.
\end{remark}

\begin{remark}\label{Edef}
Let $\omega:\LL\to\Omega^q$ be a integrable $q$-form. Then, we can consider
two maps,
\[
\xymatrix{
\bigwedge^{q-1} TX\otimes \LL\ar[r]^<<<<<{i_{(-)}\omega}
&\Omega^1_X\ar[r]^<<<<<{\omega\wedge -}&\Omega_X^{q+1}\otimes\LL^{-1}
}
\]
The integrability condition on $\omega$ implies that this diagram is a complex
and it is easy to check that its homology is supported over the points
where $\omega$ is not decomposable. We define the sheaf
associated to $\omega$, denoted $\EE=\EE(\omega)$, as the kernel 
of $\omega\wedge -$. By definition, $\EE$ is a reflexive sheaf.
\end{remark}

\begin{example}\label{exampleCodimension2inA3}
 Let $X=\mathbb{A}^3$ or, in the holomorphic case, a polydisk of dimension $3$. We take $v\in \Gamma(X,TX)$ a vector field, generic in the sense that in a coordinate system $(x_1,x_2,x_3)$ we can write $v=f_1\frac{\partial}{\partial x_1}-f_2\frac{\partial}{\partial x_2}+f_3\frac{\partial}{\partial x_3}$ with $f_1, f_2, f_3 \in k[x_1,x_2,x_3]$ and 
such that the ideal $(f_1, f_2,f_3)\subseteq k[x_1,x_2,x_3]$ is a complete intersection, that is, there are no nontrivial relations among the $f_i$'s.
 
 The vector field $v$ generates a codimension $2$ foliation in $X$, this foliation is determined by a $2$-form $\omega$ such that $i_v\omega=0$. 
 One such $\omega$ is given by
 \[
  \omega= f_3 dx_1\wedge dx_2 + f_2 dx_1\wedge dx_3 + f_1 dx_2\wedge dx_3.
 \]
It can be verified that this $\omega$ satisfies Pl\"ucker relations, is integrable, $i_v\omega=0$ and that $\Omega^2_X/(\omega)$ is torsion free. Therefore $\omega$ determines the same foliation of codimension $2$ as $v$. 
If we now look at the $1$-forms annihilated by $v$ we get the subsheaf generated by the forms
\[
\omega_1= f_3 dx_2+f_2dx_3,\quad \omega_2=f_3 dx_1- f_1 dx_3\ \text{ and } 
\omega_3=f_2 dx_1 + f_1 dx_2.
\]
 These generators satisfy the relation $f_1\omega_1 + f_2 \omega_2= f_3\omega_3$. The subsheaf $\mathcal{E}=(\omega_1,\omega_2,\omega_3)$ is generically of rank $2$ outside the zeros of the ideal $(f_1,f_2,f_3)$ but $\mathcal{E}\otimes k(\pp)$ is of rank $3$ when $\pp$ is in the zeros of this ideal. Therefore $\mathcal{E}$ is \emph{not} locally free.
Moreover when we compute the determinant of $\mathcal{E}$ we get $\wedge^2\mathcal{E}= (f_1,f_2,f_3)\cdot(\omega)\subseteq \Omega^2_X$,
\[
\omega_1\wedge\omega_2=f_3\omega,\quad
\omega_3\wedge\omega_1=f_2\omega,\quad
\omega_2\wedge\omega_3=f_1\omega.
\]
In particular $\omega$ is not in $\wedge^2\mathcal{E}$. But by \cite[Lemma, p.~210]{GH}, if $\omega$ is locally decomposable, then $\omega\in \wedge^2\mathcal{E}$.
Then $\omega$ is \emph{not} locally decomposable around the zeros of the ideal $(f_1,f_2,f_3)$.\hfill\qedsymbol
\end{example}

Composing a morphism $\omega:\LL\to \Omega^q$ with the contraction of forms with vector fields give us a morphism
\[
\bigwedge^q TX\otimes \LL\to\OO_X.
\]
\begin{definition}
The ideal sheaf $\JJ(\omega)$ is defined to be the sheaf-theoretic image of the morphism $\bigwedge^q TX\otimes \LL\to\OO_X$. The subscheme it defines is called \emph{the singular set} of $\omega$ and denoted $\sing\subseteq X$. We will denote it just as $\JJ$ if no confusion arises.
\end{definition}

\begin{remark}
 This definition agrees with Remark \ref{1notinI}, where we said that the ideal $J(\omega)$ gives the ideal defining the singular locus of $\omega$.
\end{remark}

From \cite{suwa-review}[(4.6) Definition, p.~192] we get the following definition for a codimension $q$ foliation:

\begin{definition}
Let $S$ be a scheme, $p\in S$ a closed point, and $\LL\xrightarrow{\omega} \Omega^q_X$ a codimension $q$ foliation on $X$. An \emph{unfolding} of $\omega$ is a codimension $q$ foliation $\widetilde{\LL}\xrightarrow{\widetilde{\omega}}\Omega^q_{X\times S}$ on $X\times S$ such that $\widetilde{\omega}|_{X\times\{p\}}\cong \omega$. In the case $S=\mathrm{Spec}(k[x]/(x^2))$ we will call $\widetilde{\omega}$ a \emph{first order infinitesimal unfolding}.
\end{definition}

\section{Kupka scheme in general for codimension 1 foliations}

Over this section we will restate the definition of \emph{persistent singularities} and of the \emph{Kupka scheme}, through its ideal sheaf, in a more general setting, see Definition \ref{II2} and Definition \ref{KK2}, respectively. 
In \cite{mmq} we showed that 
persistent singularities are related to unfoldings in codimension one.
We want to extend this relation to higher codimension.

First we prove Proposition \ref{JJcIIcKK}, generalizing Proposition \ref{incl2} in the codimension one case. Then we define the Kupka scheme and we prove Theorem \ref{propP=K} and Theorem \ref{teoKnotempty}, generalizing Theorems  \ref{teo1} and \ref{teo3}.

\

Given a line bundle $\LL$ and a global section $\omega\in H^0\left(X,\Omega^1_X\otimes \LL^{-1}\right)$ we will consider the Koszul complex associated with $\omega$,
\begin{equation}\label{koszulcomplex}
\xymatrix{K(\omega): & \OO_X\ar[r]^-{\wedge\omega}& \Omega^1_X\otimes\LL^{-1} \ar[r]^-{\wedge\omega} & \dots\ar[r] & \Omega^i\otimes\LL^{-i}\ar[r]& \dots}
\end{equation}
where we are following \cite[Chapter 2, B, p.~51]{GKZ} and using the identification $\bigwedge^k\left(\Omega^1_X\otimes \LL^{-1}\right)\simeq \left(\bigwedge^k \Omega^1_X \right)\otimes \left(\LL^{-k}\right)$. We will denote the cohomology sheaves of this complex by $\mathcal{H}^\bullet(\omega)$, the \emph{Koszul cohomology sheaves} of $\omega$.

\bigskip

We can use $K(\omega)$ to compute the codimension of $\sing$ by the well known result, see \cite[Theorem 17.4, p.~424]{eisenbud}:
\begin{theorem}\label{koszul}
 Let $\omega\in H^0\left(X,\Omega^1_X\otimes \LL^{-1}\right)$. The following 
statements are equivalent:
 \begin{itemize}
  \item[i)] $codim(\sing)\geq k$
  \item[ii)] $\mathcal{H}^\ell(\omega)=0$ for all $\ell <k$
 \end{itemize}
\end{theorem}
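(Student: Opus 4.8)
The plan is to reduce Theorem~\ref{koszul} to the classical depth-sensitivity of the Koszul complex over a Cohen--Macaulay local ring. Both conditions can be tested stalkwise: the cohomology sheaves $\mathcal{H}^\ell(\omega)$ are coherent, so $\mathcal{H}^\ell(\omega)=0$ if and only if $\mathcal{H}^\ell(\omega)_p=0$ for every (closed) point $p$; and $\codim(\sing)\geq k$ holds if and only if $\mathrm{height}(\JJ_p)\geq k$ in $\OO_{X,p}$ for every such $p$, with the convention that $\mathrm{height}(\OO_{X,p})=\infty$ so that the condition is vacuous at points $p\notin\sing$. So I would fix a closed point $p\in X$, trivialize $\LL$ near $p$ and choose local coordinates $x_1,\dots,x_n$ with $n=\dim X$, so that $\omega=\sum_{i=1}^n f_i\,dx_i$ with $f_i\in R:=\OO_{X,p}$. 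Under the canonical identification $\Omega^i_{X,p}\otimes\LL_p^{-i}\simeq\bigwedge^i R^n$, the localized complex $(K(\omega))_p$ becomes the ascending Koszul complex of the sequence $\mathbf{f}=(f_1,\dots,f_n)$, and $\JJ_p$ is exactly the ideal $I:=(f_1,\dots,f_n)$.

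First I would carry out the index bookkeeping. Contracting against a generator of $\bigwedge^n R^n$ identifies the ascending Koszul complex in degree $\ell$ with the usual homological Koszul complex $K_\bullet(\mathbf{f};R)$ in degree $n-\ell$, compatibly with the differentials up to sign and a fixed twist; in particular the truncation of $K(\omega)$ at $\Omega^n_X\otimes\LL^{-n}$ (all higher terms vanish because $\dim X=n$) corresponds exactly to the length-$n$ Koszul complex, so no spurious homology is introduced. Hence $\mathcal{H}^\ell(\omega)_p=0$ if and only if $H_{n-\ell}(\mathbf{f};R)=0$, and condition (ii) localized at $p$ reads: $H_j(\mathbf{f};R)=0$ for all $j>n-k$.

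Then comes the commutative-algebra input. Since $X$ is non-singular, $R$ is regular, hence Cohen--Macaulay, so $\mathrm{depth}(I,R)=\mathrm{height}(I)$. When $I\neq R$ the depth-sensitivity theorem for Koszul complexes (\cite[Theorem 17.4]{eisenbud} and its corollaries) gives
\[
\mathrm{depth}(I,R)=n-\max\{\,j:\ H_j(\mathbf{f};R)\neq 0\,\},
\]
while $H_j(\mathbf{f};R)=0$ for all $j\geq 1$ when $I=R$, i.e. when $p\notin\sing$. Combining this with the previous paragraph, $\mathcal{H}^\ell(\omega)_p=0$ for all $\ell<k$ is equivalent to $\mathrm{height}(\JJ_p)=\mathrm{depth}(I,R)\geq k$. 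Letting $p$ range over all closed points of $X$ then yields the equivalence of (i) and (ii).

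The only genuinely delicate point is the sign-and-twist bookkeeping in the identification of $K(\omega)$ with $K_\bullet(\mathbf{f};R)$, together with the standard but essential use of regularity of $\OO_{X,p}$ to pass between depth and height; neither affects any vanishing statement, so the argument goes through. Of course one may instead simply invoke the cited theorem of Eisenbud directly, which is already phrased in essentially the form required here.
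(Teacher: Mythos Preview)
Your argument is correct. Note, however, that the paper does not actually give its own proof of this statement: it simply records it as a well-known result and cites \cite[Theorem~17.4, p.~424]{eisenbud}. What you have written is precisely the standard reduction underlying that citation---localize, trivialize, identify $K(\omega)_p$ with the Koszul complex on the coefficients $(f_1,\dots,f_n)$, and then invoke depth-sensitivity together with the Cohen--Macaulay property of the regular local ring $\OO_{X,p}$ to equate depth and height. So your approach is not different from the paper's; it is a correct fleshing-out of the reference the paper defers to, and your closing remark that one may ``simply invoke the cited theorem of Eisenbud directly'' is exactly what the paper does.
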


\begin{remark}\label{notation}
  Suppose now the morphism $\omega:\LL\to \Omega^1_X$ defines a foliation on $X$. 
  Given a trivializing open set $U$ and a choice of a trivialization $\OO_X|_U\cong \LL|_U$, we take a local generator $\varpi$ of $\LL(U)$ (we think about it as a $1$-form through the morphism $\LL\to \Omega^1_X$) and take the differential $d \varpi$. 
  This defines a $\CC$-linear morphism $\LL(U)\to \Omega^2_X(U)$, which in turn we can compose with the projection $\Omega^2_X\to \mathcal{H}^2(\omega)\otimes \LL^{\otimes 2}$.
  Note that the submodule $\OO_X(U)\cdot (d\varpi)$ of $\left(\mathcal{H}^2(\omega)\otimes\LL^{\otimes 2}\right)(U)$ is independent of the choice of the trivialization. In this way one gets a morphism of coherent sheaves,
  \[
    \LL\to \mathcal{H}^2(\omega)\otimes\LL^{\otimes 2}.
  \]
Or, equivalently, a (non trivial) global section of $\mathcal{H}^2(\omega)\otimes\LL$. 
We will denote the global section or the morphism indistinctly by $[d\omega]$. 
By Theorem \ref{koszul} above, we conclude that $codim(\sing)\geq 2$.
\end{remark}

\begin{definition}\label{II2} The subscheme of \emph{persistent singularities} of $\omega$ is the one defined by the ideal sheaf $\II(\omega):=\ann([d\omega])$, for $[d\omega]\in H^0\left(X,\mathcal{H}^2(\omega)\otimes\LL\right)$. We will denote it just as $\II$ if no confusion arises.
\end{definition}

\begin{remark}\label{remarkI}
  Let $\varpi\in \Omega^1_X(U)$ be a local generator of the image of $\LL\xrightarrow{\omega}\Omega^1_X$, then the local sections of $\II(\omega)$ in $U$ are given by 
  \[
    \II(U)=\{h\in \OO_X(U): \text{there is a section } \eta\in\Gamma(U,\Omega^1_X/\LL)\text{ s.t. }  hd \varpi=\varpi\wedge\eta\}. 
  \]
\end{remark}

\begin{remark}\label{formalremark}
  For a regular local ring $(R,\mathfrak{m})$, an $R$-module $M$ and an element $m\in M$, let us denote $\widehat{R}$ the $\mathfrak{m}$-adic completion of $R$ and $\widehat{M}=M\otimes\widehat{R}$. The element $m\otimes 1\in\widehat{M}$ has as annihilator the ideal $\ann(m)\otimes \widehat{R}$.
  Setting $R=\OO_{X,\pp}$, $M= \left(\mathcal{H}^2(\omega)\otimes\LL\right)_\pp$ and $m=[d\omega]_\pp$, and following the notation of Remark \ref{notation}, we have that 
  \[
   \ann([d\omega]_\pp\otimes 1)=\{h\in \widehat{\OO_{X,\pp}}: \text{there is a formal $1$-form } \eta \ \text{ s.t. } hd \varpi=\varpi\wedge\eta\}.
  \]
\end{remark}

\begin{proposition}
 Let $\pp \in X$ be a point in $\sing$, $\OO_{X,\pp}$ the local ring around $\pp$, and $X_\pp=\mathrm{Spec}(\OO_{X,\pp})$. Then $\pp$ is in the subscheme of persistent singularities if and only if for any infinitesimal first order unfolding $\widetilde{\omega}$  of $\omega$ in $X_\pp$, the point $(\pp, 0)\in X_\pp\times \mathrm{Spec}(k[\varepsilon]/(\varepsilon^2))$ is a singular point of $\widetilde{\omega}$. 
\end{proposition}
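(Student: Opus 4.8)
The plan is to work entirely over the local ring $R=\OO_{X,\pp}$ and reduce the statement to the characterization of persistent singularities given in Remark \ref{remarkI} (together with the formal version in Remark \ref{formalremark}). First I would set up an explicit local description of an infinitesimal first order unfolding. Fix a trivializing open set $U\ni\pp$, so that over $R$ the foliation is given by a single $1$-form $\varpi\in\Omega^1_R$ with $\codim(\sing)\geq 2$ and $d\varpi\wedge\varpi=0$. A first order infinitesimal unfolding is a codimension $1$ foliation on $X_\pp\times\mathrm{Spec}(k[\varepsilon]/(\varepsilon^2))$ restricting to $\varpi$ at $\varepsilon=0$; writing the relative and the $d\varepsilon$ components, such an unfolding is given (up to the appropriate equivalence, and after clearing a unit) by
\[
\widetilde{\varpi}=\varpi+\varepsilon\,\eta+\varepsilon\,g\,d\varepsilon,
\]
with $\eta\in\Omega^1_R$ and $g\in R$; the torsion-freeness condition forces the relevant normalization, and the Frobenius integrability $\widetilde{\varpi}\wedge d\widetilde{\varpi}=0$, expanded modulo $\varepsilon^2$, is equivalent to
\[
\varpi\wedge d\eta+\eta\wedge d\varpi=0\quad\text{and}\quad \varpi\wedge d\varpi=0\ \ (\text{already known}),
\]
together with the equation coming from the $d\varepsilon$ part, which after a short computation reads $g\,d\varpi=\varpi\wedge\eta$ (up to sign). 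This last equation is exactly the defining relation for $g\in\II(\omega)_\pp$ in Remark \ref{remarkI}. This is the computational heart of the argument and I expect it to be the main obstacle: one must be careful about how the unfolding is presented (the distinction between an unfolding as a morphism $\widetilde{\LL}\to\Omega^1_{X\times S}$ and a concrete $1$-form, and the equivalence relation identifying unfoldings that differ by a relative automorphism), and about extracting precisely the relation $g\,d\varpi=\varpi\wedge\eta$ from the full integrability condition. Here the analysis is essentially the one carried out for $\PP^n$ in \cite{mmq}, localized, so I would cite that computation and only indicate the modifications needed over a general regular local ring.

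Next I would translate ``$(\pp,0)$ is a singular point of $\widetilde{\omega}$'' into algebra. The singular set of $\widetilde{\varpi}$ is defined by the ideal generated by the coefficients of $\widetilde{\varpi}$ in $X_\pp\times\mathrm{Spec}(k[\varepsilon]/(\varepsilon^2))$; since $\varpi$ already vanishes at $\pp$ (because $\pp\in\sing$), the only possibly nonvanishing coefficient at $(\pp,0)$ is the coefficient $g$ of $d\varepsilon$. Hence $(\pp,0)\in\mathrm{Sing}(\widetilde{\omega})$ if and only if $g(\pp)=0$, i.e. $g\in\mathfrak{m}_\pp$.

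Finally I would assemble the equivalence. ($\Leftarrow$) Suppose $\pp$ is a persistent singularity, so $[d\omega]_\pp=0$ in $\mathcal{H}^2(\omega)\otimes\LL$ localized at $\pp$, equivalently every $g\in R$ with $g\,d\varpi=\varpi\wedge\eta$ for some $\eta$ lies... no: rather, $\ann([d\omega]_\pp)$ contains the relevant units or not. More precisely, by Definition \ref{II2}, $\pp\in\II(\omega)$ means $\II(\omega)_\pp\subseteq\mathfrak{m}_\pp$; but by the computation above, for any unfolding the corresponding $g$ satisfies $g\,d\varpi=\varpi\wedge\eta$, hence $g\in\II(\omega)_\pp\subseteq\mathfrak{m}_\pp$, so $g(\pp)=0$ and $(\pp,0)$ is singular. ($\Rightarrow$) Conversely, if $\pp$ is not a persistent singularity then $\II(\omega)_\pp=R$, so there exist $g\in R$ a unit and $\eta\in\Omega^1_R$ with $g\,d\varpi=\varpi\wedge\eta$; normalizing, we may take $g=1$, and then $\widetilde{\varpi}=\varpi+\varepsilon\eta+\varepsilon\,d\varepsilon$ is (after checking the first integrability equation $\varpi\wedge d\eta+\eta\wedge d\varpi=0$, which follows by differentiating $d\varpi=\varpi\wedge\eta$ and using $d\varpi\wedge\varpi=0$) an infinitesimal unfolding of $\omega$ whose associated $g$ is a unit, so $(\pp,0)$ is \emph{not} a singular point. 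This gives the ``only if'' direction and completes the proof. I would remark that the formal versus algebraic distinction in Remark \ref{formalremark} is harmless here because $R$ is regular and the annihilator behaves well under completion, so one may run the argument over $\widehat{R}$ if it is more convenient to produce the $1$-form $\eta$.
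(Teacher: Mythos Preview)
Your overall strategy matches the paper's proof almost exactly: work locally, write the unfolding explicitly, read off from the $d\varepsilon$-component both the membership $g\in\II(\omega)_\pp$ (via integrability) and the criterion for $(\pp,0)$ to be nonsingular (via $g(\pp)\neq 0$). The paper does precisely this, constructing $\widetilde{\omega}=\omega+\varepsilon\eta+d\varepsilon$ in one direction and reading off the unit $h$ in the other.

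However, there is a genuine slip that breaks your argument as written. You parametrize unfoldings as $\widetilde{\varpi}=\varpi+\varepsilon\eta+\varepsilon g\,d\varepsilon$, but over $k[\varepsilon]/(\varepsilon^2)$ one has $d(\varepsilon^2)=2\varepsilon\,d\varepsilon=0$, so $\varepsilon\,d\varepsilon=0$; your form therefore has \emph{no} $d\varepsilon$-component at all. The correct decomposition (as in the paper) is $\Omega^1_{X\times S}\cong\Omega^1_X\oplus\varepsilon\,\Omega^1_X\oplus\OO_X\,d\varepsilon$, and a general unfolding reads $\widetilde{\varpi}=\varpi+\varepsilon\eta+h\,d\varepsilon$ with $h\in\OO_{X,\pp}$ and no $\varepsilon$ in front. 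This matters at the crucial step: with your form, the unfolding you build in the ``$\Rightarrow$'' direction is $\varpi+\varepsilon\eta+\varepsilon\,d\varepsilon=\varpi+\varepsilon\eta$, which still vanishes at $(\pp,0)$ and so does not witness non-persistence. Once you drop the spurious $\varepsilon$, your computation of the $d\varepsilon$-part of the integrability condition indeed yields $h\,d\varpi=\varpi\wedge(dh-\eta)$, hence $h\in\II(\omega)_\pp$, and the rest of your argument (including the verification of $\varpi\wedge d\eta+\eta\wedge d\varpi=0$ by differentiating $d\varpi=\varpi\wedge\eta$, which the paper omits) goes through and coincides with the paper's proof.
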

\begin{proof}
 Let $S=\mathrm{Spec}(k[\varepsilon]/(\varepsilon^2))$, $0\in S$ be its closed point, $p:X\times S\to S$ be the projection and $\iota:X\cong X\times\{0\}\hookrightarrow X\times S$ be the inclusion. 
 Then the sheaf $\Omega^1_{X\times S}$ can be decomposed as direct sum of $\iota_*(\OO_X)$-modules as 
 \[
  \Omega^1_{X\times S}\cong \iota_*(\Omega^1_X)\oplus \epsilon\cdot \iota_*(\Omega^1_X)\oplus \iota_*(\OO_X)d\epsilon.
 \]
A point $\pp\in\sing$ is \emph{not} a persistent singularity if and only if $1\in \II_\pp\subseteq \OO_{X,\pp}$ which, by Remark \ref{remarkI}, means that there is an open neighborhood $U\subseteq X$ of $\pp$, a local generator $\varpi$ of the image of $\LL\xrightarrow{\omega}\Omega^1_X$, and a section $\eta\in\Gamma(U,\Omega^1_X/\LL)$ such that $d\omega=\omega\wedge \eta$. 
By shrinking $U$ if necessary we can take a lifting of $\eta$ in $\Omega^1_X$ which by abuse of notation we also call $\eta$ and define
\[
 \widetilde{\omega}= \omega + \varepsilon \eta + d\varepsilon.
\]
Thus $\widetilde{\omega}$ is a form in $\Omega^1_{X\times S}$ and $\widetilde{\omega}(\pp,0)=
d\varepsilon\neq 0$, so $\pp\times \{0\}$ is \emph{not}  a singular point of $\widetilde{\omega}$.
Reciprocally, if there is an unfolding $\widetilde{\omega}$ of $\omega|_U$,
then
\[
 \widetilde{\omega}(\pp, 0)=\omega(\pp) + h(0) d\varepsilon.
\]
As $\pp$ is a singular point of $\omega$, we have $\omega(\pp)$, so if $(\pp,0)$ is not a singular point of $\widetilde{\omega}$, then $h(0)\neq 0$, so again shrinking $U$ if necessary we have that $h$ is a unit, hence $1\in \II_\pp$.
\end{proof}

Most of the known families of foliations on algebraic varieties present persistent singularities, see \cite{gmln,omegar,celn,pullback,fji,fmi,fj,mmq}. As it happens the absence of persistent singularities impose some restrictions on the line bundle $\LL$. To explain this we have to make explicit use of a result that is implied in the proof of Lefschetz Theorem on $(1,1)$ classes as is proved in \cite[Chapter 1.1 p.:~ 141]{GH}. 

\begin{lemma}
Let $\LL$ be a line bundle. Choose a trivialization $(U_i, \phi_i)_{i\in I}$ of $\LL$ with gluing data $g_{ij}\in \OO^*_X(U_{ij})$. The \v Cech  cocycle $\frac{1}{2\pi i }[d\log g_{ij}]\in Z^1(\Omega^1_X)$ represents the Chern class $c_1(\LL)$ of $\LL$ in $H^1(X,\Omega^1_X)$.
\end{lemma}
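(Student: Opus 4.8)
The plan is to reduce this to a direct \v Cech cohomology computation, following the standard argument for the Lefschetz $(1,1)$ theorem but isolating only the part that does not require Hodge theory. First I would recall that for a line bundle $\LL$ on $X$, the class $c_1(\LL)\in H^2(X,\ZZ)$ is computed from the exponential sequence $0\to\ZZ\to\OO_X\xrightarrow{\exp(2\pi i\,\cdot)}\OO_X^*\to 0$: the line bundle corresponds to a class $[g_{ij}]\in H^1(X,\OO_X^*)$ given by its transition functions, and its image under the connecting homomorphism $H^1(X,\OO_X^*)\to H^2(X,\ZZ)$ is $c_1(\LL)$.

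Next I would track where the logarithmic derivative enters. The image of $c_1(\LL)$ in $H^2(X,\CC)$, equivalently in $H^1(X,\Omega^1_X)$ via the Dolbeault--\v Cech comparison for the holomorphic de Rham complex, is obtained by following the same transition functions through the complex $\CC\to\OO_X\xrightarrow{d}\Omega^1_X\xrightarrow{d}\Omega^2_X\to\cdots$. Concretely: lift $g_{ij}$ to $h_{ij}=\frac{1}{2\pi i}\log g_{ij}\in\OO_X(U_{ij})$ locally; the obstruction to $h_{ij}$ being a \v Cech cocycle in $\OO_X$ is an integral class, which recovers $c_1(\LL)\in H^2(X,\ZZ)$; applying $d$ kills that obstruction and produces the cocycle $\frac{1}{2\pi i}\,d\log g_{ij}=\frac{1}{2\pi i}\frac{dg_{ij}}{g_{ij}}\in Z^1(X,\Omega^1_X)$, since on triple overlaps $d\log g_{ij}+d\log g_{jk}+d\log g_{ki}=d\log(g_{ij}g_{jk}g_{ki})=d\log 1=0$ using the cocycle condition $g_{ij}g_{jk}g_{ki}=1$ for $\LL$. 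This cocycle represents the image of $c_1(\LL)$ under $H^2(X,\ZZ)\to H^1(X,\Omega^1_X)$, which is exactly the statement.

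For a cleaner and more self-contained argument I would instead work directly with the logarithmic de Rham / Atiyah-type sequence. Differentiating the cocycle relation $\phi_i=g_{ij}\phi_j$ on trivializations shows that the local $1$-forms $\frac{1}{2\pi i}d\log g_{ij}$ patch to give a genuine $1$-cocycle with values in $\Omega^1_X$; its cohomology class is independent of the chosen trivialization because changing $\phi_i$ to $f_i\phi_i$ with $f_i\in\OO_X^*(U_i)$ changes $g_{ij}$ to $f_i g_{ij}f_j^{-1}$ and hence the cocycle by the coboundary $d\log f_i-d\log f_j$. That this well-defined class is $c_1(\LL)$ then follows by comparing with any connection: choosing a $C^\infty$ Hermitian metric, the curvature form $\Theta$ satisfies $[\tfrac{i}{2\pi}\Theta]=c_1(\LL)$ in de Rham cohomology, and unwinding the \v Cech--de Rham double complex identifies $[\tfrac{i}{2\pi}\Theta]$ with $[\tfrac{1}{2\pi i}d\log g_{ij}]$ in $H^1(X,\Omega^1_X)$ under the Hodge decomposition.

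The main obstacle is purely bookkeeping: making the \v Cech--de Rham (or \v Cech--Dolbeault) comparison explicit enough to see that the particular representative $\frac{1}{2\pi i}[d\log g_{ij}]$ lands in the $H^1(X,\Omega^1_X)$ summand and equals $c_1(\LL)$ there, rather than merely in $H^2(X,\CC)$. Since this is classical and the paper explicitly attributes it to \cite[Chapter 1.1, p.~141]{GH}, I would keep the proof short: state the exponential sequence, exhibit the cocycle, verify the cocycle condition via $d\log(g_{ij}g_{jk}g_{ki})=0$, note independence of trivialization, and cite \emph{loc.\ cit.} for the identification of the resulting class with $c_1(\LL)$ in $H^1(X,\Omega^1_X)$.
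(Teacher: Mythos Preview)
Your proposal is correct and in fact more detailed than the paper's own proof, which consists of a single sentence deferring entirely to \cite[Chapter 1.1, p.~141]{GH}. Your final paragraph, where you exhibit the cocycle, check the cocycle condition via $d\log(g_{ij}g_{jk}g_{ki})=0$, note independence of trivialization, and then cite \emph{loc.\ cit.}, is already strictly more than what the paper provides; the earlier paragraphs sketching the exponential sequence and the \v Cech--de Rham comparison are precisely the content of the cited passage in Griffiths--Harris that the authors ask the reader to unwind.
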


\begin{proof}
The claim follows from a careful reading of the proof of the Proposition in page 141 of \cite[Chern classes of line bundles, Chapter 1.1, p.~141]{GH}.
\end{proof}

\begin{proposition}\label{propc1L=0}
Let $X$ be a smooth projective variety over $\CC$.
If $\LL$ is a line bundle such that $H^1(X,\LL)=0$ and $\LL\xrightarrow{\omega}\Omega^1_X$ is a foliation without persistent singularities then $c_1(\LL)=0$, where $c_1(\LL)$ is the Chern class of the line bundle viewed in $H^2(X,\CC)$.
\end{proposition}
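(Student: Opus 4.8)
The plan is to extract a concrete global $1$-form out of the absence of persistent singularities and use it to exhibit the cocycle $\frac{1}{2\pi i}[d\log g_{ij}]$ representing $c_1(\LL)$ as a coboundary in $H^1(X,\Omega^1_X)$, which already forces $c_1(\LL)=0$ in $H^2(X,\CC)$ since the de Rham image of $c_1(\LL)$ factors through its Hodge component in $H^1(X,\Omega^1_X)$. Write $\omega$ locally, on a trivializing cover $(U_i,\phi_i)$ of $\LL$, as a local generator $\varpi_i\in\Omega^1_X(U_i)$ of the image of $\omega$; on overlaps $\varpi_i=g_{ij}\varpi_j$ with the gluing cocycle $g_{ij}\in\OO_X^*(U_{ij})$. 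Since $\II(\omega)=\OO_X$, Remark \ref{remarkI} gives, after refining the cover, sections $\eta_i\in\Omega^1_X(U_i)$ with $d\varpi_i=\varpi_i\wedge\eta_i$ for each $i$.

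Next I would compare the $\eta_i$ on overlaps. Differentiating $\varpi_i=g_{ij}\varpi_j$ gives $d\varpi_i=dg_{ij}\wedge\varpi_j+g_{ij}\,d\varpi_j=\varpi_i\wedge(\eta_j-d\log g_{ij})$, while also $d\varpi_i=\varpi_i\wedge\eta_i$; hence $\varpi_i\wedge(\eta_i-\eta_j+d\log g_{ij})=0$ on $U_{ij}$. Because $\codim(\sing)\geq 2$ (Remark \ref{notation}) and $\varpi_i$ generates a rank-one subsheaf of $\Omega^1_X$ with torsion-free quotient, the relation $\varpi_i\wedge\theta=0$ on an open set forces $\theta$ to be a local multiple of $\varpi_i$; so there are sections $f_{ij}\in\OO_X(U_{ij})$ with
\[
 \eta_i-\eta_j+d\log g_{ij}=f_{ij}\,\varpi_i.
\]
Now I would absorb the $f_{ij}\varpi_i$ term. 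Set $\mu_{ij}:=\eta_i-\eta_j+d\log g_{ij}=f_{ij}\varpi_i$; the cochain $(\eta_i)$ shows that $(\mu_{ij})$ and $(d\log g_{ij})$ differ by the \v Cech coboundary $\delta(\eta_i)$, so they represent the same class in $H^1(X,\Omega^1_X)$, namely $2\pi i\, c_1(\LL)$ by the preceding Lemma. It remains to see that the class of $(\mu_{ij})=(f_{ij}\varpi_i)$ vanishes. The cocycle condition on $(d\log g_{ij})$ and on $\delta(\eta_i)$ forces $(f_{ij}\varpi_i)$ to be a \v Cech $1$-cocycle for $\Omega^1_X$; writing it through the inclusion $\LL\hookrightarrow\Omega^1_X$, the data $(f_{ij})$ is a $1$-cocycle valued in $\LL$ itself (using $\varpi_i=g_{ij}\varpi_j$ to transport $f_{ij}\varpi_i$ to sections of $\LL$ over $U_{ij}$), i.e. a class in $H^1(X,\LL)$. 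By hypothesis $H^1(X,\LL)=0$, so $(f_{ij})$ is a coboundary, hence so is $(f_{ij}\varpi_i)$ in $Z^1(\Omega^1_X)$, and therefore $c_1(\LL)=0$ in $H^1(X,\Omega^1_X)$ and a fortiori in $H^2(X,\CC)$.

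I expect the main obstacle to be the bookkeeping in the last step: checking carefully that the $\varpi_i$-multiples $(f_{ij}\varpi_i)$ assemble into a genuine \v Cech cocycle with values in the line subsheaf $\LL\subseteq\Omega^1_X$ (so that the hypothesis $H^1(X,\LL)=0$ applies), rather than merely in $\Omega^1_X$. This uses that $\LL$ is a \emph{subsheaf} of $\Omega^1_X$ via $\omega$ and that the local generators $\varpi_i$ glue exactly by $g_{ij}$, together with the torsion-freeness of $\Omega^1_X/\LL$ to pin down the $f_{ij}$ uniquely; one should also double-check that refining the cover to get the $\eta_i$ does not disturb the identification of the cocycle with $c_1(\LL)$, which is immediate since Chern classes are independent of the chosen cover.
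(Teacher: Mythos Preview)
Your argument is correct and is essentially the paper's own proof. The only difference is one of packaging: the paper takes the $\eta_i$ as sections of $\Omega^1_X/\LL$ from the outset (as in Remark~\ref{remarkI}), so that on overlaps one gets directly $\eta_i-\eta_j=d\log g_{ij}$ in $\Omega^1_X/\LL$ with no $f_{ij}\varpi_i$ correction term; this shows at once that the image of $c_1(\LL)$ in $H^1(\Omega^1_X/\LL)$ vanishes, and the long exact sequence of $0\to\LL\to\Omega^1_X\to\Omega^1_X/\LL\to 0$ together with $H^1(X,\LL)=0$ finishes. Your version lifts the $\eta_i$ to $\Omega^1_X$ and then unwinds the same long exact sequence by hand via the $(f_{ij}\varpi_i)$ cocycle; the ``main obstacle'' you anticipate is precisely the step the paper bypasses by staying in the quotient.
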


\begin{proof}
Let $(U_i,\phi_i)$ be a trivialization of $\LL$ with gluing data $g_{ij}\in \OO^*_X(U_{ij})$. On each $U_i$ we have a local generator of $\LL(U_i)$, namely $\phi_i^{-1}(1)$, we denote by $\omega_i$ the image under $\omega$ of this generator.
 The fact that the foliation defined by $\omega$ has no persistent singularities means that on each $U_i$ there is a local section $\eta_i$ of $\Omega^1_X/\LL(U_i)$
such that $d\omega_i=\omega_i\wedge\eta_i$.
On $U_{ij}$ the restriction of the local $1$-form $\omega_i$ satisfies
\[
\omega_i=g_{ij}\omega_j.
\]
So computing the de Rham differential of this forms on $U_{ij}$ gives us,
\begin{eqnarray*}
\omega_i\wedge\eta_i=d\omega_i=d(g_{ij}\omega_j)=\\
=g_{ij}d\omega_j+dg_{ij}\wedge \omega_j=\\
=g_{ij} \omega_j\wedge\eta_j + dg_{ij}\wedge \omega_j=\\
=g_{ij}\omega_j\wedge\left(\eta_j-\frac{dg_{ij}}{g_{ij}}\right).
\end{eqnarray*}
Subtracting both sides of the equality we get that, on $U_{ij}$, 
\[
\eta_i-\eta_j=\frac{dg_{ij}}{g_{ij}},
\]
as sections of $\Gamma(U_{ij},\Omega^1_X /\LL)$.
Therefore we get a \v{C}ech cochain $(\eta_i)_{i\in I}$ of \linebreak
$C^0(\Omega^1_X/\LL)$ whose border is 
\[
\partial (\eta)_{ij}=d\log g_{ij}\in B^1(\Omega^1/\LL).
\]
As the cocycle $(d\log g_{ij})\in Z^1(\Omega^1_X)$ represents $(2\pi i )c_1(\LL)$, the existence of the cochain $(\eta_i)$ implies $c_1(\LL)$ is in the kernel of the map $H^1(\Omega^1_X)\to H^1(\Omega^1_X/\LL)$ induced by the short exact sequence of sheaves
\[
0\to\LL\xrightarrow{\omega}\Omega^1_X\to \Omega^1_X/\LL\to 0.
\] 
The hypothesis $H^1(\LL)=0$ then implies $c_1(\LL)=0$.
\end{proof}

\begin{corollary}
Let $X$ be a smooth projective variety over $\CC$ such that every line bundle $\LL$ verifies $H^1(X,\LL)=0$ and such that $Pic(X)$ is torsion-free (\emph{e.g.}: $X$ smooth complete intersection). Then every foliation on $X$ have persistent singularities.
\end{corollary}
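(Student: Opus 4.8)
The statement to prove is the Corollary: if $X$ is a smooth projective variety over $\CC$ with $\mathrm{Pic}(X)$ torsion-free and $H^1(X,\LL)=0$ for every line bundle $\LL$, then every foliation on $X$ has persistent singularities. The plan is to argue by contradiction, invoking Proposition \ref{propc1L=0} directly. Suppose $\LL\xrightarrow{\omega}\Omega^1_X$ is a foliation on $X$ \emph{without} persistent singularities. Since $H^1(X,\LL)=0$ holds by hypothesis, Proposition \ref{propc1L=0} applies and yields $c_1(\LL)=0$ in $H^2(X,\CC)$. The next step is to upgrade this to the statement that $\LL$ is trivial as a line bundle: since $X$ is smooth projective over $\CC$, the exponential sequence gives an inclusion $\mathrm{Pic}(X)/\mathrm{Pic}^0(X)\hookrightarrow H^2(X,\ZZ)$, and $c_1(\LL)=0$ in $H^2(X,\CC)$ forces the image of $\LL$ in $\mathrm{Pic}(X)/\mathrm{Pic}^0(X)$ to be torsion, hence zero if $\mathrm{Pic}(X)$ is torsion-free — but we also need to kill $\mathrm{Pic}^0(X)$.

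\textbf{Killing $\mathrm{Pic}^0$.} The cleanest route is to observe that if $H^1(X,\LL)=0$ for \emph{every} line bundle, then in particular $H^1(X,\OO_X)=0$, which means $\mathrm{Pic}^0(X)=0$; combined with $\mathrm{Pic}(X)$ torsion-free, the Néron–Severi-type argument above then gives $\LL\cong\OO_X$. Thus a foliation without persistent singularities on such an $X$ would have to be given by a global section $\omega\in H^0(X,\Omega^1_X)$. But here one runs into the issue that $\Omega^1_X/\LL=\Omega^1_X/\OO_X$ must be torsion-free, and more to the point, the \emph{non-triviality} of the Koszul class $[d\omega]\in H^0(X,\mathcal{H}^2(\omega)\otimes\LL)$ (see Remark \ref{notation}, which guarantees $[d\omega]\neq 0$ and hence $\II(\omega)\subsetneq\OO_X$, i.e. $1\notin\II$) shows that $\II(\omega)\neq\OO_X$ globally — so persistent singularities cannot be absent. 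Actually, the nontriviality remark already shows $1\notin\II(\omega)$ as a global statement, but absence of persistent singularities is about the local condition at every point; so I should be careful to phrase the contradiction correctly: the conclusion $c_1(\LL)=0$ together with $H^1(X,\OO_X)=0$ and torsion-freeness forces $\LL\cong\OO_X$, and then I claim this is incompatible with $\omega$ defining a genuine codimension-one foliation. The simplest way to finish: on a projective $X$, $H^0(X,\LL^{-1})=H^0(X,\OO_X)=\CC$, and tracking through the morphism $\omega:\LL\to\Omega^1_X$ with $\LL=\OO_X$ gives a global holomorphic $1$-form, and one must then rule this out — but in general $X$ can have global $1$-forms, so the real point must be that the hypothesis $H^1(X,\LL)=0$ for all $\LL$, applied with $\LL=\Omega^1_X$-summands or via Hodge theory, forces $H^0(X,\Omega^1_X)=0$ too (by Hodge symmetry $h^{0,1}=h^{1,0}$, and $h^{0,1}=\dim H^1(X,\OO_X)=0$), so there is no nonzero $\omega$ at all, contradiction.

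\textbf{Assembling the argument.} So the cleanest logical skeleton is: (1) by hypothesis $H^1(X,\OO_X)=0$, hence by Hodge theory $H^0(X,\Omega^1_X)=0$; (2) if moreover $\LL\cong\OO_X$ then a foliation morphism $\omega:\LL\to\Omega^1_X$ would be a nonzero element of $H^0(X,\Omega^1_X)$, which is impossible — so actually the \emph{only} foliations live on $\LL\not\cong\OO_X$; (3) for such $\LL$, either $H^1(X,\LL)=0$ (true by hypothesis), so Proposition \ref{propc1L=0} says that \emph{if} $\omega$ had no persistent singularities then $c_1(\LL)=0$; (4) but $c_1(\LL)=0$ in $H^2(X,\CC)$ together with $\mathrm{Pic}^0(X)=0$ (from $H^1(X,\OO_X)=0$) and $\mathrm{Pic}(X)$ torsion-free forces $\LL\cong\OO_X$, contradicting step (2). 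Hence every foliation on $X$ has persistent singularities.

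\textbf{Main obstacle.} The one genuinely delicate point is the passage from $c_1(\LL)=0$ in $H^2(X,\CC)$ to $\LL\cong\OO_X$. One needs: (a) $c_1:\mathrm{NS}(X)\hookrightarrow H^2(X,\ZZ)$ has torsion kernel exactly $\mathrm{NS}(X)_{\mathrm{tors}}$, so vanishing in $H^2(X,\CC)$ only gives that $[\LL]$ is torsion in $\mathrm{NS}(X)$; (b) torsion-freeness of $\mathrm{Pic}(X)$ — hence of $\mathrm{NS}(X)$ — to conclude $[\LL]=0$ in $\mathrm{NS}(X)$; (c) $\mathrm{Pic}^0(X)=0$, which I get from $H^1(X,\OO_X)=0$, to conclude $\LL$ is actually trivial and not merely algebraically equivalent to $\OO_X$. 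The parenthetical remark in the statement (smooth complete intersections of dimension $\geq 3$) is precisely the Lefschetz-hyperplane situation where $H^1(X,\OO_X)=0$, $\mathrm{Pic}(X)\cong\ZZ$ generated by $\OO_X(1)$, and all the intermediate $H^1(X,\OO_X(k))=0$ by Serre vanishing / Bott-type vanishing, so I would add a sentence verifying the hypotheses in that example for completeness.
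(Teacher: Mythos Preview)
Your proposal is correct and takes essentially the same approach as the paper: apply Proposition \ref{propc1L=0} to get $c_1(\LL)=0$, use $H^1(X,\OO_X)=0$ together with torsion-freeness of $\mathrm{Pic}(X)$ and the exponential sequence to force $\LL\cong\OO_X$, and finish by observing that Hodge symmetry gives $H^0(X,\Omega^1_X)=0$, leaving no nonzero $\omega$. Your write-up is more verbose and carefully separates the roles of $\mathrm{Pic}^0(X)=0$ and $\mathrm{NS}(X)$ torsion-free, but the logical skeleton (especially your ``Assembling the argument'' paragraph) matches the paper's proof exactly.
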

\begin{proof}
From the exponential sequence and the hypothesis $H^1(X,\OO_X)=0$ 
it follows that $c_1:Pic(X)\to H^2(X,\ZZ)$ is injective. Assume that 
$\omega:\LL\to\Omega^1_X$ is a foliation without persistent singularities. Then 
the above Proposition imply that $c_1(\LL)$ is a torsion element in $H^2(X,\ZZ)$.
But given that $Pic(X)$ is torsion-free, we get $\LL\cong\OO_X$.

In particular, $\omega$ is a global differential $1$-form
which contradicts the fact that $H^0(X,\Omega^1_X)=H^1(X,\OO_X)=0$.
\end{proof}

\begin{remark}\label{{domega}}
Given a trivializing open set $U$, a choice of a trivialization $\OO_X|_U\cong \LL|_U$ and a local generator $\varpi$ of $\LL(U)$, the mapping $\varpi\mapsto d\varpi$ defines a $\OO_X$-linear morphism $\LL\to \Omega^2_X\otimes \OO_{\sing}$. We will denote by $\{d\omega\}$ this morphism or equivalently the global section of $\Omega^2_X\otimes \OO_{\sing}\otimes\LL^{-1}$ it defines.
\end{remark}

\begin{definition}\label{KK2}The subscheme of \emph{Kupka singularities} of $\omega$ is the one defined by the ideal sheaf $\KK(\omega):=\ann(\{d\omega\})\in\Omega^2_X\otimes \OO_{\sing}\otimes\LL^{-1}$. We will denote it just as $\KK$ if no confusion arises.
\end{definition}

\begin{proposition}\label{JJcIIcKK}
 Let $\JJ$ be the ideal sheaf of the singular set of $\omega$, $\KK$ the ideal of the Kupka singularities of $\omega$ and $\II$ the ideal of persistent singularities. Then the following inclusions hold,
 \[
  \JJ\subseteq \II\subseteq \KK.
 \]
\end{proposition}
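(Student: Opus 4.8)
The plan is to verify the two inclusions $\JJ\subseteq\II$ and $\II\subseteq\KK$ locally on a trivializing open set $U$, using the explicit descriptions already recorded in Remark \ref{remarkI} (for $\II$) and Remark \ref{{domega}} / Definition \ref{KK2} (for $\KK$). Fix such a $U$ together with a trivialization $\OO_X|_U\cong\LL|_U$ and the resulting local generator $\varpi$ of the image of $\LL\xrightarrow{\omega}\Omega^1_X$. On $U$ the relevant sections are: $\JJ(U)=\{h : h\varpi\in\LL(U)\cdot(\text{stuff})\}$, more precisely $\JJ(U)$ is generated by the contractions $i_X\varpi$ for $X$ a vector field (Remark \ref{1notinI}, and the definition of $\JJ$ as the image of $\bigwedge^qTX\otimes\LL\to\OO_X$ specialized to $q=1$); $\II(U)=\{h : h\,d\varpi=\varpi\wedge\eta\text{ for some }\eta\in\Gamma(U,\Omega^1_X/\LL)\}$; and $\KK(U)=\{h : h\,d\varpi\in \varpi\wedge\Omega^1_X(U) + \text{terms killed in }\Omega^2_X\otimes\OO_{\sing}\}$, i.e. $h\,d\varpi$ maps to $0$ in $\Omega^2_X\otimes\OO_{\sing}\otimes\LL^{-1}$.

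For the inclusion $\II\subseteq\KK$: if $h\in\II(U)$ then $h\,d\varpi=\varpi\wedge\eta$ with $\eta$ a section of $\Omega^1_X/\LL$; lifting $\eta$ to a genuine $1$-form $\tilde\eta$ on $U$ (shrinking $U$ if necessary), we get $h\,d\varpi-\varpi\wedge\tilde\eta\in\varpi\wedge\LL(U)=0$, hence $h\,d\varpi=\varpi\wedge\tilde\eta$. Now $\Omega^2_X\otimes\OO_{\sing}=\Omega^2_X/(\JJ\cdot\Omega^2_X)$, and since the coefficients of $\varpi$ generate $\JJ$ (again Remark \ref{1notinI}), the image of $\varpi\wedge\tilde\eta$ in $\Omega^2_X\otimes\OO_{\sing}$ is zero; therefore $h$ annihilates $\{d\omega\}$, i.e. $h\in\KK(U)$. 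One must be slightly careful that the notion of "the coefficients of $\varpi$ generate $\JJ$" is exactly the statement that $\varpi\wedge(\Omega^1_X) \subseteq \JJ\cdot\Omega^2_X$, which is immediate from writing $\varpi$ in local coordinates and using that contracting $\varpi$ with $\partial/\partial x_i$ gives the coefficients.

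For the inclusion $\JJ\subseteq\II$: a generator of $\JJ(U)$ has the form $h=i_X\varpi$ for a local vector field $X$. Using the Cartan / Leibniz identity $i_X(\varpi\wedge d\varpi)=(i_X\varpi)\,d\varpi-\varpi\wedge(i_X\,d\varpi)$ together with the integrability hypothesis $\varpi\wedge d\varpi=0$ (which is part of the definition of a codimension $1$ foliation, Definition \ref{foliation} / the integrability condition in Section \ref{unf-schemes}), we obtain $(i_X\varpi)\,d\varpi=\varpi\wedge(i_X\,d\varpi)$. Setting $\eta=i_X\,d\varpi$, this is precisely the relation $h\,d\varpi=\varpi\wedge\eta$ witnessing $h\in\II(U)$ (and we may further pass to the class of $\eta$ in $\Omega^1_X/\LL$). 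Since $\II(U)$ is an ideal and the $i_X\varpi$ generate $\JJ(U)$, this gives $\JJ\subseteq\II$. Finally, all of these constructions are independent of the choice of trivialization — this is already noted in Remarks \ref{notation} and \ref{{domega}} — so the local inclusions glue to the asserted inclusions of ideal sheaves.

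The main obstacle, such as it is, is bookkeeping rather than depth: making sure the lift of $\eta$ from $\Omega^1_X/\LL$ to $\Omega^1_X$ is harmless (it is, because the ambiguity lies in $\LL(U)=\OO_U\cdot\varpi$ and $\varpi\wedge\varpi=0$), and making sure the passage between "$h\,d\varpi$ lies in $\varpi\wedge\Omega^1_X$" and "$h\,d\varpi$ dies in $\Omega^2_X\otimes\OO_{\sing}$" is an equivalence and not just an implication — i.e. that $\varpi\wedge\Omega^1_X(U)$ and $\JJ\cdot\Omega^2_X(U)$ agree, or at least that the former is contained in the latter, which is all that is needed for $\II\subseteq\KK$. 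Both points are routine once one writes $\varpi$ in coordinates.
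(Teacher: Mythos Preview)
Your proof is correct and follows essentially the same route as the paper: both inclusions are checked on a trivializing open set, $\JJ\subseteq\II$ comes from contracting the integrability relation $\varpi\wedge d\varpi=0$ with a vector field, and $\II\subseteq\KK$ comes from observing that $\varpi\wedge\eta\in\JJ\cdot\Omega^2_X$ since the coefficients of $\varpi$ generate $\JJ$. Your additional remarks about lifting $\eta$ and about gluing are sound but not strictly needed beyond what the paper does.
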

\begin{proof}
 Let $U\subseteq X$ be an open subscheme such that $\LL|_U\simeq\OO_X$, and $\varpi$ a local generator of $\LL(U)$.
 
 Suppose $h\in\JJ(U)\subseteq\OO_X(U)$ is a local section. By shrinking $U$ if necessary we may assume that there is a vector field $v\in T_X(U)$ such that $h= i_v(\omega)$. Then we have 
 \[
  0=i_v(\varpi\wedge d\varpi)=i_v(\varpi)d\varpi - \varpi \wedge i_v(d\varpi).
 \]
So, calling $\eta=i_v(d\varpi)$, we get $hd\varpi=\varpi\wedge \eta$.
Hence $h$ is in $\II(U)$, which proves the first inclusion. 

Now assume $h\in \II(U)$, then again by shrinking $U$ if necessary, we may assume that there is a $\eta\in \Omega^1_X/\LL (U)$ such that $hd\varpi= \varpi\wedge\eta$. By definition we have $\varpi \in \JJ(U)\cdot \Omega^1_X(U)$, then $hd\varpi\in \JJ(U)\cdot \Omega^2_X(U)$ so $h$ is in the annihilator of $\{d\omega\}$ in $\Omega^2_X\otimes \OO_{\sing}$. Then $h\in \KK(U)$, which proves the second inclusion.
\end{proof}

With the following results we can generalize Theorem \ref{teo1} and Theorem \ref{teo3} giving conditions for the existence of Kupka singularities:

\begin{definition}
Let $X$ be a smooth projective variety and $\LL\xrightarrow{\omega}\Omega^1_X$ a foliation of codimension $1$, we are going to call $\per\subseteq X$ the subschemes of persistent singularities.
\end{definition}

\begin{theorem}\label{propP=K}
Let $X$ be a smooth projective variety and $\LL\xrightarrow{\omega}\Omega^1_X$ a foliation of codimension $1$ such that $\JJ(\omega)$ is a sheaf of radical ideals. Let $\per\subseteq X$ and $\kup\subseteq X$ be the subschemes of persistent and Kupka singularities respectively. Then $\per_{\mbox{red}}=\kup_{\mbox{red}}$.  
\end{theorem}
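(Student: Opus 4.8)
The plan is to prove the two inclusions $\per_{\mathrm{red}}\subseteq\kup_{\mathrm{red}}$ and $\kup_{\mathrm{red}}\subseteq\per_{\mathrm{red}}$ separately, working locally on a trivializing open set $U$ where we may choose a generator $\varpi$ of $\LL(U)$ and regard it as a $1$-form. By Proposition \ref{JJcIIcKK} we already have $\II\subseteq\KK$, hence $\per\supseteq\kup$ as closed subschemes and a fortiori $\per_{\mathrm{red}}\supseteq\kup_{\mathrm{red}}$. (Note the direction: a larger ideal cuts out a smaller scheme, so the inclusion $\II\subseteq\KK$ of ideal \emph{sheaves} gives $\kup\subseteq\per$.) So the only inclusion that needs work is $\per_{\mathrm{red}}\subseteq\kup_{\mathrm{red}}$, i.e.\ we must show that away from the Kupka locus $\omega$ has no persistent singularities; equivalently, for every point $\pp\notin\kup$ we must produce a local (or formal) $1$-form $\eta$ with $d\varpi=\varpi\wedge\eta$, which exhibits $1\in\II_\pp$.

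The key point is that radicality of $\JJ$ lets us invoke the division theorem, Theorem \ref{teodivision}: if $\pp\in\sing$ is such that $\JJ_\pp$ is radical and $d\omega_\pp\in\JJ_\pp\cdot\Omega^2_{X,\pp}$, then there is a formal $1$-form $\eta$ with $d\omega=\omega\wedge\eta$ near $\pp$. The condition $d\omega_\pp\in\JJ_\pp\cdot\Omega^2_{X,\pp}$ is, by Definition \ref{KK2} and Remark \ref{{domega}}, exactly the statement that $\pp$ lies outside the (scheme-theoretic) Kupka locus, i.e.\ that $\{d\omega\}$ vanishes at $\pp$, which for $\pp\notin\kup_{\mathrm{red}}$ holds on a neighbourhood. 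Thus, concretely, I would argue: take $\pp\notin\kup_{\mathrm{red}}$. If $\pp\notin\sing$ then trivially $1\in\JJ_\pp\subseteq\II_\pp$ by Proposition \ref{JJcIIcKK}, so $\pp\notin\per_{\mathrm{red}}$. If $\pp\in\sing$, then since $\pp\notin\kup_{\mathrm{red}}$ and $\kup$ is closed, on a neighbourhood of $\pp$ we have $\{d\omega\}\equiv 0$, i.e.\ $d\varpi\in\JJ\cdot\Omega^2_X$ locally; combined with the hypothesis that $\JJ_\pp$ is radical, Theorem \ref{teodivision} yields a formal $1$-form $\eta$ with $d\varpi=\varpi\wedge\eta$, and by Remark \ref{formalremark} this means $1\in\II_\pp$, so $\pp\notin\per$ and in particular $\pp\notin\per_{\mathrm{red}}$. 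This gives $\per_{\mathrm{red}}\subseteq\kup_{\mathrm{red}}$, and combined with the reverse inclusion from Proposition \ref{JJcIIcKK} we conclude $\per_{\mathrm{red}}=\kup_{\mathrm{red}}$.

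The main obstacle I anticipate is the passage between the formal statement delivered by Theorem \ref{teodivision} (an $\eta$ over $\widehat{\OO_{X,\pp}}$) and the membership $1\in\II_\pp$ in the ordinary local ring. This is handled by Remark \ref{formalremark}: since $\OO_{X,\pp}$ is a regular (hence Noetherian) local ring, faithful flatness of completion gives $\ann_{\OO_{X,\pp}}([d\omega]_\pp)\otimes\widehat{\OO_{X,\pp}}=\ann_{\widehat{\OO_{X,\pp}}}([d\omega]_\pp\otimes 1)$, so a unit in the completed annihilator forces $\II_\pp=\OO_{X,\pp}$. A secondary point to be careful about is that $\kup$ as defined by $\KK=\ann(\{d\omega\})$ is supported inside $\sing$, so "$\pp\notin\kup_{\mathrm{red}}$ and $\pp\in\sing$" really does force $\{d\omega\}$ to vanish on a neighbourhood of $\pp$ within $\sing$, which is precisely the hypothesis $d\omega_\pp\in\JJ_\pp\cdot\Omega^2_{X,\pp}$ needed to apply the division theorem; one should phrase this using that the support of a coherent sheaf is closed. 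Everything else is bookkeeping with the local description of $\II$ in Remark \ref{remarkI}.
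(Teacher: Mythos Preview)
Your proposal is correct and follows essentially the same approach as the paper: both use Proposition \ref{JJcIIcKK} for the inclusion $\kup\subseteq\per$, and for the reverse inclusion both argue that for $\pp\notin\kup$ one has $d\omega_\pp\in\JJ_\pp\cdot\Omega^2_{X,\pp}$, then invoke Theorem \ref{teodivision} (using the radicality hypothesis) to obtain a formal $\eta$ with $d\omega=\omega\wedge\eta$, and finally appeal to Remark \ref{formalremark} to conclude $1\in\II_\pp$. Your write-up is in fact slightly more careful than the paper's in separating out the trivial case $\pp\notin\sing$ and in spelling out the faithful-flatness argument behind the formal-to-local step.
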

\begin{proof}
We are going to prove that $X\setminus\per=X\setminus\kup$. By Proposition \ref{JJcIIcKK} we have $\kup\subseteq\per$, so $X\setminus \per\subseteq X\setminus \kup$. 
Now suppose $\pp$ is a point \emph{not} in $\kup$, by abuse of notation we will call $\omega$ a local generator of $\LL_\pp$ viewed as a $1$-form. As $\pp$ is not in $\kup$ 
then $d\omega \in \JJ_\pp\cdot\Omega^2_{X,\pp}$. By hypothesis $\JJ_\pp$ is radical and so by Theorem \ref{teodivision} we have that $d\omega$ decomposes as $\omega\wedge\eta$ for some formal $1$-form $\eta$, this implies $1\in \II_\pp$, so $\pp$ is not in $\per$ (see Remark \ref{formalremark}). 
\end{proof}

\begin{theorem}\label{teoKnotempty}
 Let $X$ be a smooth projective variety and $\LL\xrightarrow{\omega}\Omega^1_X$ a foliation of codimension $1$ such that $\JJ(\omega)$ is a sheaf of radical ideals and such that $c_1(\LL)\neq 0$ and $H^1(X,\LL)=0$. Then $\omega$ has Kupka singularities.
\end{theorem}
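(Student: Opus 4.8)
The plan is to argue by contradiction: suppose $\omega$ has no Kupka singularities, i.e. $\KK(\omega) = \OO_X$. Since $\JJ(\omega)$ is radical, Theorem \ref{propP=K} gives $\per_{\mathrm{red}} = \kup_{\mathrm{red}}$, so in particular $\per$ is empty as a set, hence $\II(\omega) = \OO_X$ (an ideal sheaf whose vanishing locus is empty and which contains $\JJ$... actually one needs to be a bit careful here — the cleanest route is to go through the local statement directly). More precisely, if $\KK = \OO_X$ then at every point $\pp$ we have $d\omega_\pp \in \JJ_\pp \cdot \Omega^2_{X,\pp}$, and since $\JJ_\pp$ is radical, Theorem \ref{teodivision} produces a formal $1$-form $\eta$ with $d\omega = \omega\wedge\eta$, so $1 \in \II_\pp$ by Remark \ref{formalremark}. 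As this holds at every point, $\II(\omega) = \OO_X$, i.e. $\omega$ is a foliation without persistent singularities.

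**Next I would apply Proposition \ref{propc1L=0}.** We are given $H^1(X,\LL) = 0$, so a foliation $\LL \xrightarrow{\omega} \Omega^1_X$ without persistent singularities forces $c_1(\LL) = 0$ in $H^2(X,\CC)$. But this contradicts the hypothesis $c_1(\LL) \neq 0$. Therefore the assumption that $\omega$ has no Kupka singularities is untenable, and $\omega$ must have Kupka singularities.

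**The main point to be careful about** is the passage from "$\KK = \OO_X$" (no Kupka points) to "$\II = \OO_X$" (no persistent singularities). One cannot simply invoke $\per_{\mathrm{red}} = \kup_{\mathrm{red}}$ and conclude $\per = \emptyset \Rightarrow \II = \OO_X$ without knowing that $\per$ is reduced — but in fact we don't need that, because the inclusion $\II \subseteq \KK$ of Proposition \ref{JJcIIcKK} immediately gives $\II = \OO_X$ as soon as $\KK = \OO_X$. Wait — that makes the argument even shorter, but it bypasses the radicality hypothesis, which suggests the intended statement is subtler: the natural hypothesis to negate is "$\omega$ has no Kupka \emph{singularities}" in the sense that $\kup_{\mathrm{set}} = \emptyset$, and then one needs radicality of $\JJ$ plus Lemma \ref{K=Kset}-type reasoning (or rather its codimension-one incarnation via $\kup_{\mathrm{red}} = \per_{\mathrm{red}}$) to upgrade emptiness of the Kupka \emph{set} to emptiness of the persistent \emph{scheme}, hence to $\II = \OO_X$ locally via Theorem \ref{teodivision}. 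So the real content is: assume $\kup_{\mathrm{set}} = \emptyset$; then by Theorem \ref{propP=K}, $\per_{\mathrm{set}} = \emptyset$; but $\per_{\mathrm{set}} = \emptyset$ together with $\JJ$ radical means at every $\pp$ we are in the hypotheses of Theorem \ref{teodivision} (since $\pp \notin \kup$ gives $d\omega_\pp \in \JJ_\pp \Omega^2_{X,\pp}$), yielding $1 \in \II_\pp$ everywhere, so $\II = \OO_X$; then Proposition \ref{propc1L=0} with $H^1(X,\LL)=0$ gives $c_1(\LL) = 0$, contradicting the hypothesis. **The one technical obstacle** is verifying that the hypotheses actually assemble correctly — in particular that emptiness of $\kup_{\mathrm{set}}$ really does give $d\omega_\pp \in \JJ_\pp \cdot \Omega^2_{X,\pp}$ at \emph{every} $\pp$ (including singular points of $\JJ$, where one must check the scheme-theoretic Kupka ideal $\KK = \ann(\{d\omega\})$ being trivial is equivalent to this membership), which is essentially the definition of $\KK$ in Definition \ref{KK2} unwound carefully.
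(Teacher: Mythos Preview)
Your overall strategy is correct and coincides with the paper's own proof: the paper simply observes that Proposition~\ref{propc1L=0} (contrapositively) forces $\per\neq\emptyset$ under the hypotheses $c_1(\LL)\neq 0$ and $H^1(X,\LL)=0$, and then Theorem~\ref{propP=K} (using $\JJ$ radical) gives $\per_{\mathrm{red}}=\kup_{\mathrm{red}}$, so $\kup\neq\emptyset$. Your contradiction argument is the contrapositive of this and is fine; the detour through Theorem~\ref{teodivision} at the end is harmless but redundant, since it is already the content of the proof of Theorem~\ref{propP=K}.

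One genuine slip to flag: your aside ``the inclusion $\II\subseteq\KK$ of Proposition~\ref{JJcIIcKK} immediately gives $\II=\OO_X$ as soon as $\KK=\OO_X$'' is backwards --- from $\II\subseteq\KK=\OO_X$ you learn nothing about $\II$. You correctly sensed something was off and abandoned this route, so it does not affect your final argument, but it is worth noting that this is precisely why the radicality hypothesis on $\JJ$ is not superfluous: one genuinely needs Theorem~\ref{propP=K} (equivalently Theorem~\ref{teodivision}) to pass from $\kup=\emptyset$ to $\per=\emptyset$, and that step uses $\JJ$ radical.
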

\begin{proof}
 This follows from Proposition \ref{propc1L=0} and Theorem \ref{propP=K}, as a foliation with $c_1(\LL)\neq 0$ and $H^1(X,\LL)=0$ has persistent singularities on one hand, and having radical singular ideal implies the reduced scheme defined by persistent singularities is equal to the reduced scheme of Kupka singularities, in particular this last scheme is not empty.
\end{proof}

\section{Infinitesimal unfoldings in codimension $q$}

Along this section we review the definition of unfolding of a codimension $q$ foliation on a variety $X$. We will also generalize the definitions of persistent singularities and of Kupka singularities for codimension $q$ foliations, see Definition \ref{defII} and Definition \ref{defKK}, respectively. We classify which singular points of $\omega$ are such that they extend to singular points of every unfolding $\widetilde{\omega}$, see Proposition \ref{prop}, 
and then, we generalize Proposition \ref{incl2} and Proposition \ref{JJcIIcKK} to the codimension $q$ case, see Proposition \ref{JJcIIcKK2}. 
Finally, with Theorem \ref{teoConnectionE} we establish that the absence of persistent singularities implies the existence of a connection on $\EE$, the sheaf of 1-forms
defining the foliation under strong cohomological assumptions.

\

Let $S=\mathrm{Spec}(k[\varepsilon]/(\varepsilon^2))$, $0\in S$ be its closed point, $p:X\times S\to S$ be the projection and $\iota:X\cong X\times\{0\}\hookrightarrow X\times S$ be the inclusion. 
Then the sheaf $\Omega^q_{X\times S}$ can be decomposed as direct sum of $\iota_*(\OO_X)$-modules as 
 \[
  \Omega^q_{X\times S}\cong \iota_*\Omega^q_X\oplus \varepsilon\cdot (\iota_*\Omega^q_X)\oplus \iota_*\Omega^{q-1}_X\wedge d\varepsilon.
 \]
Given a codimension $q$ foliation determined by a morphism $\LL\xrightarrow{\omega}\Omega^q_X$, and a first order infinitesimal unfolding $\widetilde{\omega}: \widetilde{\LL}\to \Omega^q_{X\times S}$ of $\omega$, we take local generators $\varpi$ of $\LL(U)$ and $\widetilde{\varpi}$ of $\widetilde{\LL}(U\times S)$. Suppose $\omega$ and $\widetilde{\omega}$ are locally decomposable, then we may take $U$ small enough such that $\varpi$ and $\widetilde{\varpi}$ decompose as products 
\[
 \varpi=\varpi_1\wedge\dots\wedge\varpi_q, \qquad \widetilde{\varpi}=\widetilde{\varpi}_1\wedge\dots\wedge \widetilde{\varpi}_q.
\]
Then we can write $\widetilde{\varpi}_i=\varpi_i + \varepsilon \eta_i + h_i d\varepsilon$
and the equations $d\widetilde{\varpi}_i\wedge \widetilde{\varpi}=0$ for $i=1,\dots, q$ 
are equivalent to the equations

\begin{equation*}
 \left\{
\begin{aligned}
&d\eta_i\wedge \varpi+ d\varpi_i\wedge \left(\sum_{j=1}^q (-1)^j\eta_j \varpi_{\widehat{j}}\right)=0,\qquad (i=1,\dots,q), \\
&(dh_i-\eta_i)\wedge \varpi+ d\varpi_i\wedge \left(\sum_{j=1}^q (-1)^jh_j \varpi_{\widehat{j}}\right)=0,\qquad (i=1,\dots,q), 
\end{aligned}
\right.
\end{equation*}

where $ \varpi_{\widehat{j}}=\varpi_1\wedge\dots\wedge\varpi_{j-1}\wedge\varpi_{j+1}\wedge\dots\wedge\varpi_q\in \Omega^{q-1}_X(U)$.

\

As is shown in \cite[proof of (6.1) Theorem, p.~199]{suwa-review} the second equation implies the first. So we finally get that the equations $d\widetilde{\varpi}_i\wedge \widetilde{\varpi}=0$ for $i=1,\dots, q$ are equivalent to 

\begin{equation}\label{equnfcodq}
\begin{aligned}
 \left\{
 (dh_i-\eta_i)\wedge \varpi+ d\varpi_i\wedge \left(\sum_{j=1}^q (-1)^jh_j \varpi_{\widehat{j}}\right)=0,\qquad (i=1,\dots,q)\ .
\right. 
\end{aligned}
\end{equation}

\begin{proposition}\label{prop}
 Suppose $\pp$ is a singular point of $\omega$. Then there exist an infinitesimal unfolding $\widetilde{\omega}$ of $\omega$ in $X_\pp$ such that $(\pp,0)$ is \emph{not} a singular point of $\widetilde{\omega}$ if and only if $\omega$ is decomposable locally around $\pp$, not all  $\varpi_{\widehat{j}}(\pp)$ vanish and there are $1$-forms $\alpha_{ij}\in\Omega^1_{X,\pp}$ for $i,j=1,\ldots,q$ such that 
 \[
  d\varpi_i=\sum_{j=1}^q \alpha_{ij}\wedge \varpi_j,\qquad \text{for }i=1,\dots,q.
 \]
\end{proposition}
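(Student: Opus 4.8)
The plan is to analyze the obstruction equations \eqref{equnfcodq} directly. The point $(\pp,0)$ fails to be a singular point of $\widetilde\omega$ precisely when $\widetilde\varpi(\pp,0)\neq 0$; writing $\widetilde\varpi = \widetilde\varpi_1\wedge\cdots\wedge\widetilde\varpi_q$ with $\widetilde\varpi_i = \varpi_i + \varepsilon\eta_i + h_i\, d\varepsilon$, and using that $\omega(\pp)=0$ (so every $\varpi_j(\pp)$ term contributes nothing to the value at $(\pp,0)$ except through wedges that still vanish), one sees that the only surviving terms in $\widetilde\varpi(\pp,0)$ are those involving $d\varepsilon$, which come out as $\sum_j (-1)^{?}\, h_j(\pp)\, \varpi_{\widehat j}(\pp)\wedge d\varepsilon$. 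So first I would make precise the (elementary but bookkeeping-heavy) claim: $(\pp,0)$ is not singular for $\widetilde\omega$ iff $\sum_{j} \pm h_j(\pp)\,\varpi_{\widehat j}(\pp) \neq 0$ in $\Omega^{q-1}_{X,\pp}\otimes k(\pp)$. In particular this forces $\omega$ to be locally decomposable near $\pp$ (otherwise there is no honest local factorization $\varpi=\varpi_1\wedge\cdots\wedge\varpi_q$ to unfold termwise, cf. Example \ref{exampleCodimension2inA3}) and forces not all $\varpi_{\widehat j}(\pp)$ to vanish.

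Next, for the ``only if'' direction, suppose such an unfolding exists, so we have $h_i, \eta_i$ satisfying \eqref{equnfcodq} with $\sum_j \pm h_j(\pp)\varpi_{\widehat j}(\pp)\neq 0$. The idea is to extract the claimed $\alpha_{ij}$ from the obstruction equations by a ``division'' argument. Rewrite the $i$-th equation of \eqref{equnfcodq} as
\[
 d\varpi_i\wedge \Big(\sum_{j=1}^q (-1)^j h_j\,\varpi_{\widehat j}\Big) = (\eta_i - dh_i)\wedge \varpi = (\eta_i-dh_i)\wedge \varpi_1\wedge\cdots\wedge\varpi_q .
\]
Set $\beta := \sum_{j}(-1)^j h_j \varpi_{\widehat j}$, a $(q-1)$-form which at $\pp$ is nonzero and is ``almost decomposable'': note $\varpi_k\wedge\beta = \pm h_k\, \varpi$ for each $k$, so $\varpi_k\wedge\beta$ lies in the line spanned by $\varpi$. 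Since not all $h_k(\pp)$ vanish, after relabeling we may assume $h_1$ is a unit near $\pp$, and then $\beta$ together with $\varpi_2,\dots,\varpi_q$ forms (up to the unit $h_1$ and signs) a local frame for the same rank-$q$ subsheaf $\EE$ that $\varpi_1,\dots,\varpi_q$ generates — indeed $\varpi_1 = \pm h_1^{-1}\beta \mp h_1^{-1}\sum_{j\ge 2}(-1)^j h_j \varpi_{\widehat j}$ modulo lower terms, which one checks expresses $\varpi_1$ in terms of $\beta,\varpi_2,\dots,\varpi_q$. The equation then reads $d\varpi_i\wedge\beta \in \varpi\cdot\Omega^1_{X,\pp}$, i.e. $d\varpi_i\wedge\beta$ is divisible by the decomposable $q$-form $\varpi$. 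Applying the de Rham division lemma (\cite[Lemma, p.~210]{GH} / the same ``$\omega\wedge\eta=0$ locally decomposable'' principle cited in Example \ref{exampleCodimension2inA3}) to $d\varpi_i$ against the frame $\{\beta,\varpi_2,\dots,\varpi_q\}$, one concludes $d\varpi_i \in \sum_j \Omega^1_{X,\pp}\wedge\varpi_j$, which is exactly the assertion $d\varpi_i=\sum_j\alpha_{ij}\wedge\varpi_j$.

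For the ``if'' direction, assume $\omega$ is locally decomposable near $\pp$ with some $\varpi_{\widehat j}(\pp)\neq 0$ and $d\varpi_i=\sum_j\alpha_{ij}\wedge\varpi_j$. Say $\varpi_{\widehat q}(\pp)\neq 0$ after relabeling. I would try the ansatz $h_q = 1$, $h_j=0$ for $j<q$, so $\beta = (-1)^q\varpi_{\widehat q}$ and the $i$-th obstruction equation becomes $(\eta_i-dh_i)\wedge\varpi = (-1)^q d\varpi_i\wedge\varpi_{\widehat q} = (-1)^q\sum_j \alpha_{ij}\wedge\varpi_j\wedge\varpi_{\widehat q} = (-1)^q\alpha_{iq}\wedge\varpi_q\wedge\varpi_{\widehat q} = \pm\alpha_{iq}\wedge\varpi$. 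Since $dh_q=0$ and $dh_j=0$, this is solved by choosing $\eta_i := \pm\alpha_{iq}$ for $i<q$ and $\eta_q$ so that $(\eta_q - 0)\wedge\varpi = \pm\alpha_{qq}\wedge\varpi$, e.g. $\eta_q:=\pm\alpha_{qq}$. Then $\widetilde\varpi_i=\varpi_i+\varepsilon\eta_i$ for $i<q$ and $\widetilde\varpi_q=\varpi_q+\varepsilon\eta_q + d\varepsilon$ defines a genuine first-order unfolding (all of \eqref{equnfcodq}, hence all $d\widetilde\varpi_i\wedge\widetilde\varpi=0$, hold, and torsion-freeness of $\Omega^q_{X\times S}/\widetilde\LL$ is automatic on the small affine $X_\pp$), and $\widetilde\varpi(\pp,0) = (-1)^{q-1}\varpi_{\widehat q}(\pp)\wedge d\varepsilon \neq 0$, so $(\pp,0)$ is not singular.

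I expect the main obstacle to be the ``only if'' direction — specifically, justifying rigorously that divisibility of $d\varpi_i\wedge\beta$ by $\varpi$ together with the near-decomposability of $\beta$ forces $d\varpi_i\in\sum_j\Omega^1\wedge\varpi_j$. This needs the de Rham division lemma applied carefully in the local ring $\OO_{X,\pp}$ (or its completion), using that $\beta,\varpi_2,\dots,\varpi_q$ really is a local frame for $\EE$ after inverting a unit; the bookkeeping of signs and the passage between $\varpi_1,\dots,\varpi_q$ and $\beta,\varpi_2,\dots,\varpi_q$ as frames is where the delicacy lies. The forward implication's computation of $\widetilde\varpi(\pp,0)$ is routine but must be done with care about which wedge terms survive when $\omega(\pp)=0$.
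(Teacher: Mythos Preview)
Your ``if'' direction is essentially the paper's construction with a specific convenient choice of the $h_i$'s: the paper takes arbitrary $h_i$ with $\sum_i(-1)^ih_i(\pp)\varpi_{\widehat i}(\pp)\neq 0$ and sets $\eta_i=dh_i+\sum_j(-1)^jh_j\alpha_{ij}$, and your choice $h_q=1$, $h_j=0$ for $j<q$, $\eta_i=\pm\alpha_{iq}$ is a clean special case of this. That part is fine.

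The ``only if'' direction has a genuine error. You assert that ``$\beta$ together with $\varpi_2,\dots,\varpi_q$ forms a local frame for $\EE$'' and write ``$\varpi_1=\pm h_1^{-1}\beta\mp h_1^{-1}\sum_{j\ge 2}(-1)^jh_j\varpi_{\widehat j}$''. But $\beta=\sum_j(-1)^jh_j\varpi_{\widehat j}$ is a $(q-1)$-form, not a $1$-form; for $q>2$ it cannot lie in $\EE\subseteq\Omega^1_X$, let alone be part of a frame for it. Your displayed identity in fact expresses $\varpi_{\widehat 1}$, not $\varpi_1$, in terms of $\beta$ and the other $\varpi_{\widehat j}$ --- you have conflated $\varpi_1$ with $\varpi_{\widehat 1}$. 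Consequently the appeal to the de~Rham division lemma ``against the frame $\{\beta,\varpi_2,\dots,\varpi_q\}$'' is not well-typed, and the conclusion $d\varpi_i\in\sum_j\Omega^1_{X,\pp}\wedge\varpi_j$ is not established. Even when $q=2$ (so $\beta$ \emph{is} a $1$-form), your relabeling is reversed: $\beta=-h_1\varpi_2+h_2\varpi_1$, so $(\beta,\varpi_2)$ generates $\EE$ iff $h_2$, not $h_1$, is a unit.

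The paper's argument for this direction is different in kind: it does not attempt a direct division in $\OO_{X,\pp}$. Instead it passes to an arbitrary closed point $\mathfrak q$ in the support of the class $[d\varpi_i]\in\Omega^2_{X,\pp}/((\varpi_1,\dots,\varpi_q)\wedge\Omega^1_{X,\pp})$, observes that $\omega(\mathfrak q)=0$ there by Frobenius, and uses that $\varpi_{\widehat 1}(\mathfrak q)\neq 0$ to see $\varpi_2(\mathfrak q),\dots,\varpi_q(\mathfrak q)$ are independent while $\varpi_1(\mathfrak q)$ lies in their span; evaluating the obstruction equation at $\mathfrak q$ then reduces to linear algebra in the fiber and forces $[d\varpi_i](\mathfrak q)=0$. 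Since this holds at every point of the support, $[d\varpi_i]=0$. Your idea can be pushed in a related direction --- from $d\varpi_i\wedge\beta\in\Omega^1\wedge\varpi$ and $\beta+h_1\varpi_{\widehat 1}\in\varpi_1\wedge\Omega^{q-2}$ one does get information about $d\varpi_i\wedge\varpi_{\widehat 1}$ --- but turning this into $d\varpi_i\in\EE\wedge\Omega^1$ requires controlling the behavior of $\varpi_1$ (which degenerates into $\mathrm{span}(\varpi_2,\dots,\varpi_q)$ along $\sing$) and is not the one-step de~Rham division you describe.
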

\begin{proof}
 Given local forms $\alpha_{ij}\in \Omega^1_{X,\pp}$ such that $d\varpi_i=\sum_{j=1}^q \alpha_{ij}\wedge \varpi_j, (i=1,\dots,q)$ we may take local sections $h_i\in\OO_{X,\pp}$ such that $\sum_{i=1}^q(-1)^ih_i(\pp)\varpi_{\widehat{i}}(\pp)\neq 0$.
 With that choice of $h_i$'s we take $\eta_i:=dh_i+\sum_{j=1}^q (-1)^j h_j\alpha_{ij}$.
 We will see that the $\eta_i$'s and $h_i$'s determine an unfolding of $\omega$ locally around $\pp$.
 For that we need to verify the Equation (\ref{equnfcodq}) above. Indeed we have
 \begin{align*}
  (dh_i-\eta_i)&\wedge\varpi+d\varpi_i\wedge\left(\sum_{j=1}^q (-1)^jh_j\varpi_{\widehat{j}}\right)=\\
 &= (dh_i-\eta_i)\wedge\varpi+\left( \sum_{k=1}^q\alpha_{ik}\wedge \varpi_k \right)\wedge\sum_{j=1}^q (-1)^j\varpi_{\widehat{j}}=\\
 &=(dh_i-\eta_i)\wedge\varpi + \left(\sum_{j,k=1}^1(-1)^j h_j\alpha_{ik}\wedge\varpi_k\wedge\varpi_{\widehat{j}}\right)=\\
 &=(dh_i-\eta_i)\wedge \varpi + \left(\sum_{j=1}^q (-1)^j\alpha_{ij}\wedge \varpi\right) =\\
 &=\left((dh_i-\eta_i)+\sum_{j=1}^q (-1)^j h_j\alpha_{ij} \right)\wedge \varpi .
 \end{align*}
 And from the definition of the $\eta_i$ we have that
 \begin{align*}
 &\left((dh_i-\eta_i)+\sum_{j=1}^q (-1)^j h_j\alpha_{ij} \right)\wedge \varpi =\\
 &= \left(-\sum_{j=1}^q (-1)^j h_j\alpha_{ij} +\sum_{j=1}^q (-1)^j h_j\alpha_{ij} \right)\wedge \varpi= 0\\ 
 \end{align*}

Then we have an unfolding $\widetilde{\omega}$ given locally around $\pp$ by 
\[
 \bigwedge_{i=1}^q (\varpi_i+\varepsilon \eta_i + h_i d\varepsilon)= \varpi+\varepsilon \left(\sum_{i=1}^q \eta_i\wedge \varpi_{\widehat{j}}\right)+ \left(\sum_{j=1}^q (-1)^jh_j \varpi_{\widehat{j}}\right)\wedge d\varepsilon.
\]
As $\sum_{j=1}^q (-1)^jh_j \varpi_{\widehat{j}}\neq 0$ then $\widetilde{\omega}$ does not vanishes on $(\pp,0)$.

\bigskip

Reciprocally, let us suppose there is an unfolding $\widetilde{\omega}$ such that $\widetilde{\omega}(\pp,0)\neq 0$.
As $\widetilde{\omega}$ satisfies Pl\"ucker relations and does not vanish in $\pp$, 
then it decomposes as a product of $1$-forms $\varpi_i+\varepsilon\eta_i+h_id\varepsilon$, $i=1,\dots,q$.
As $\widetilde{\omega}|_{X\times\{0\}}=\omega$ then any local generator $\varpi$ of the image of $\omega$ is locally decomposable as $\varpi_1\wedge\dots\wedge\varpi_q$.
We want to prove that the class $[d\varpi_i]$ of $d\varpi_i$ in $\Omega^2_{X,\pp}/((\varpi_1,\dots,\varpi_q)\wedge\Omega^1_{X,\pp})$ is zero for $i=1,\dots, q$.
Let $\mathfrak{q}$ be a point in the support of $[d\varpi]$, then $\omega$ is singular in $\mathfrak{q}$, for otherwise $[d\varpi]=0$ because of the Frobenius condition $d\varpi_i\wedge \varpi = 0$, for $i=1,\ldots,q$. 
By Equation (\ref{equnfcodq}), we have $\sum_{j=1}^q (-1)^jh_j \varpi_{\widehat{j}}(\pp)\neq 0$, in particular not all of the $\varpi_{\widehat{j}}(\pp)$ vanishes. 
Without any loss of generality, we may assume $\varpi_{\widehat{1}}(\pp)$ does not vanish. Then also $\varpi_{\widehat{1}}(\mathfrak{q})\neq 0$.
But $\varpi(\mathfrak{q})=0$, therefore $\varpi_2(\mathfrak{q}),\dots,\varpi_q(\mathfrak{q})$ are linearly independent and $\varpi_1(\mathfrak{q})$ is a linear combination of them.
Hence $\varpi_{\widehat{j}}(\mathfrak{q})=f_j \varpi_{\widehat{1}}(\mathfrak{q})$. 
Then evaluating Equation (\ref{equnfcodq}) in $\mathfrak{q}$, and adding the term $h_{1}(\mathfrak{q}) d\varpi_i(\mathfrak{q}) \wedge \varpi_{\widehat{1}}(\mathfrak{q})$, gives
\[
h_{1}(\mathfrak{q}) d\varpi_i(\mathfrak{q}) \wedge \varpi_{\widehat{1}}(\mathfrak{q})=(dh_i-\eta_i)\wedge\varpi(\mathfrak{q}) +\left(\sum_{j=2}^q h_j(\mathfrak{q}) f_j(\mathfrak{q})\right)d\varpi_i(\mathfrak{q})\wedge\varpi_{\widehat{1}}(\mathfrak{q}) \ .
\]
So, after clearing $h_1(\mathfrak{q})\neq 0$, there is a $1$-form $\alpha_{i1}$ such that 
\[
d\varpi_i\wedge \varpi_{\widehat{1}}(\mathfrak{q})= \alpha_{i1}\wedge \varpi_1\wedge \varpi_{\widehat{1}}(\mathfrak{q}) ,
\]
then we have $(d\varpi_i-\alpha_{i1}\wedge\varpi_1)\wedge \varpi_{\widehat{1}}(\mathfrak{q})=0$, but as $\varpi_{\widehat{1}}(\mathfrak{q})\neq 0$ , this implies that there are forms $\alpha_{ij}$ such that
\[
 (d\varpi_i-\alpha_{i1}\wedge\varpi_1)(\mathfrak{q})=\sum_{j\neq 1}\alpha_{ij}\wedge \varpi_j(\mathfrak{q}).
\]
Hence $[d\varpi_i]=0$ in any point of its support, a contradiction, so $[d\varpi_i]=0$ in $\Omega^2_{X,\pp}/((\varpi_1,\dots,\varpi_q)\wedge\Omega^1_{X,\pp})$.
\end{proof}

 Let $\LL\xrightarrow{\omega}\Omega^q_X$ be an integrable morphism determining a subsheaf $\mathcal{E}\to\Omega^1_X$. Composing $\omega$ with wedge product gives a morphism
$\LL\otimes\Omega^2_X\xrightarrow{} \Omega^{q+2}_X$ and, tensoring by $\mathcal{L}^{-1}$, we get a morphism $\Omega^2_X\xrightarrow{} \Omega^{q+2}_X\otimes\mathcal{L}^{-1}$ which we will call $(\omega\wedge -)_{\Omega^2_X}$ to remark that the domain is $\Omega^2_X$. 
As $\omega$ is integrable, following Remark \ref{Edef} we get a morphism  $\xymatrix@1{\mathcal{E}\otimes\Omega^1_X \ar@{^(->}[r] & \Omega^1_X\otimes \Omega^1_X \ar[r]^{} & \Omega^2_X}$. Then we have that the sheaf $\mathcal{E}\otimes\Omega^1_X$ is in the kernel of $(\omega\wedge -)_{\Omega^2_X}$, since the following diagram commutes
\[
 \xymatrix@C=50pt{
 \Omega^1_X\otimes \Omega^1_X \ar[r]^-{-\wedge -} \ar[d]_-{Id \otimes(\omega\wedge -)} & \Omega^2_X \ar[d]^-{(\omega\wedge -)_{\Omega^2_X}}\\
\Omega^1_X\otimes\Omega^{q+1}_X\otimes\mathcal{L}^{-1}\ar[r]_-{(-\wedge -)\otimes Id}& \Omega^{q+2}_X\otimes\mathcal{L}^{-1}.
}
\]

This allow us to give the following definition.

\begin{definition}
  Let $\LL\xrightarrow{\omega}\Omega^q_X$ be an integrable morphism, we define the sheaf $\mathcal{H}^2(\omega)$ as
 \[
  \mathcal{H}^2(\omega):= \ker((\omega\wedge -)_{\Omega^2_X}) /\mathcal{E}\otimes \Omega^1_X.
 \]
\end{definition}

\begin{remark}
 The restriction of the de Rham differential to $\mathcal{E}$ gives a sheaf map $\mathcal{E}\to \Omega^2_X$ which is not $\OO_X$-linear but whose image is in $ \ker((\omega\wedge -)_{\Omega^2_X}) $ as $\omega$ is integrable.
 The projection of this map to $\mathcal{H}^2(\omega)$ is however $\OO_X$-linear as $d g\varpi\cong g d\varpi \mod \mathcal{E}\otimes\Omega^1_X$ for every local section $\varpi$ of $\mathcal{E}$. 
\end{remark}

 Let us fix $\LL\xrightarrow{\omega}\Omega^q_X$ be an integrable morphism determining a subsheaf $\mathcal{E}\hookrightarrow
\Omega^1_X$. Then we have the following definitions:

\begin{definition}\label{defII}
The subscheme of \emph{persistent singularities} of $\omega$ is the one defined by the ideal sheaf $\II(\omega)$ to be the annihilator of $d(\mathcal{E})$ in $\mathcal{H}^2(\omega)$. In other words the local sections of $\II(\omega)$ in an open set $U\subseteq X$ are given by
 \begin{align*}
  \II(\omega)(U)&=\left\{ h\in \OO_X(U): \forall \varpi\in\mathcal{E}(U),\ hd\varpi=\sum_{j=1}^q \alpha_j\wedge\omega_j \right. \text{ for some local }\\
  &\hspace{3.7cm}\text{$1$-forms $\alpha_j\in\Omega^1_X(U)$ and forms $\omega_j\in\mathcal{E}(U)$ }
 \Bigg\}\ . 
 \end{align*}
We will denote it just as $\II$ if no confusion arises.
\end{definition}

\begin{example}
 With the following example we are showing that the ideal $\II(\omega)$ can have codimension greater than $2$. Let us consider the $2$-form in $\PP^3$ defined by $\omega = i_{\frac{\partial}{\partial x_0}}(i_R\Omega)$ where $\Omega=dx_0\wedge dx_1\wedge dx_2 \wedge dx_3$ and $R$ denotes the radial vector field $\sum_{i=0}^3 x_i\frac{\partial}{\partial x_i}$. We get that:
\begin{align*}
 \omega &= -x_3 \ dx_1\wedge dx_2+x_2 \ dx_1\wedge dx_3-x_1\ dx_2\wedge dx_3
\end{align*}
Such  a differential form it is locally decomposable and locally integrable and has singular locus of codimension 3. The ideal of persistent singularities has also codimension 3 and it coincides with the ideal of the singular locus. This can be easily seen since the singular locus  are all Kupka points. We suggest to use the software {\tt DiffAlg}, see \cite{diffalg} for more elaborate computations.
\end{example}

We can consider an extension of Remark \ref{{domega}} for $\omega\in\Omega^q_X$. Then:

\begin{definition}\label{defKK}
The subscheme of \emph{Kupka singularities} of $\omega$ is the one defined by the ideal sheaf  $\KK(\omega):= \ann(\{d\omega\})\in \Omega^{q+1}_X\otimes\OO_{\sing}\otimes \LL^{-1}$.  We will denote it just as $\KK$ if no confusion arises.
\end{definition}

\begin{remark}
We would like to notice that both definitions above coincide to the ones given in the codimension 1 case, as the reader can see by comparing them to Definition \ref{II2} and Remark \ref{remarkI} and to Defintion \ref{KK2}, respectively.
\end{remark}

\begin{lemma}\label{pluckerFiltration}
 Given a short exact sequence of modules 
 \[
  0\to M\to P\to N \to 0,
 \]
there is a filtration in $\bigwedge^q P$.
\[
 \bigwedge^q P=F^0 \supseteq F^1\supseteq \dots \supseteq F^{q+1}=(0),
\]
such that 
\[
 F^i/F^{i+1}\cong \bigwedge^{q-i}N\otimes \bigwedge^{i}M.
 \]
\end{lemma}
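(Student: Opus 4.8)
This is a classical fact about the filtration on exterior powers induced by a short exact sequence; the content is to make the filtration and the associated graded pieces explicit and natural. The plan is to define $F^i$ as the image of a natural map and then compute the quotients $F^i/F^{i+1}$.

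First I would set $F^i \subseteq \bigwedge^q P$ to be the submodule generated by all products $m_1\wedge\cdots\wedge m_i\wedge p_{i+1}\wedge\cdots\wedge p_q$ with $m_1,\dots,m_i\in M$ (viewed inside $P$ via the injection) and $p_{i+1},\dots,p_q\in P$ arbitrary. Equivalently, $F^i = \operatorname{im}\!\left(\bigwedge^i M \otimes \bigwedge^{q-i} P \to \bigwedge^q P\right)$. It is immediate from this description that $F^0 = \bigwedge^q P$, that $F^{q+1}=0$ (there is no room for $q+1$ factors from $M$ inside a $q$-fold product — more precisely $\bigwedge^{q+1}(\text{anything contributing})$ forces a redundant factor), and that $F^{i+1}\subseteq F^i$, since a product with $i+1$ factors from $M$ is in particular a product with $i$ factors from $M$. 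This establishes the descending chain; only the identification of the graded pieces remains.

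For the quotients, I would construct for each $i$ a surjection $\bigwedge^{i}M \otimes \bigwedge^{q-i}N \twoheadrightarrow F^i/F^{i+1}$ and then exhibit an inverse. The surjection: given $m_1\wedge\cdots\wedge m_i \in \bigwedge^i M$ and $\bar p_{i+1}\wedge\cdots\wedge \bar p_q\in \bigwedge^{q-i}N$, lift each $\bar p_j$ to $p_j\in P$ and send the element to the class of $m_1\wedge\cdots\wedge m_i\wedge p_{i+1}\wedge\cdots\wedge p_q$ in $F^i/F^{i+1}$. This is well defined because changing a lift $p_j$ by an element of $M$ changes the product by something lying in $F^{i+1}$, and it is clearly surjective onto $F^i/F^{i+1}$ by the generating description of $F^i$. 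The reverse map is obtained more conceptually: working locally one may split $P\cong M\oplus N$ (over a local ring, or after passing to stalks, the sequence splits since these are the sheaves we apply it to; alternatively argue for the universal situation of free modules and use that the construction is functorial and compatible with localization), and then $\bigwedge^q P \cong \bigoplus_{i} \bigwedge^i M\otimes\bigwedge^{q-i}N$, with $F^i$ corresponding to $\bigoplus_{k\ge i}\bigwedge^k M\otimes\bigwedge^{q-k}N$, so $F^i/F^{i+1}\cong \bigwedge^i M\otimes\bigwedge^{q-i}N$; one checks the resulting isomorphism is independent of the splitting.

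The main obstacle is the well-definedness and naturality when $P$ does not split globally: the decomposition $\bigwedge^q P\cong\bigoplus \bigwedge^i M\otimes\bigwedge^{q-i}N$ is only local and non-canonical, whereas the filtration $F^\bullet$ is canonical, so care is needed to show the local isomorphisms $F^i/F^{i+1}\cong\bigwedge^i M\otimes\bigwedge^{q-i}N$ glue. The clean way around this is to check that the surjection $\bigwedge^i M\otimes\bigwedge^{q-i}N\to F^i/F^{i+1}$ described above is intrinsic (no splitting used), and then verify injectivity locally, where a splitting is available; since injectivity of a map of sheaves is a local property, this suffices. I expect the bookkeeping with signs and the verification that changing lifts lands in $F^{i+1}$ to be routine, so I would not belabor it.
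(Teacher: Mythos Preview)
Your definition of $F^i$ is identical to the paper's, whose entire proof is that one sentence defining the filtration and asserting the rest follows. You actually supply more detail than the paper does; the only soft spot is the aside that short exact sequences split over local rings (false in general without projectivity of $N$), but your alternative via the free case and naturality is the correct patch, and in any event the paper offers nothing further.
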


\begin{proof}
 The result follows from defining $F^i\subseteq \bigwedge^q P$ to be the submodule generated by the elements of the form $(m_1\wedge\dots\wedge m_i\wedge a_{i+1}\wedge\dots\wedge a_q)$ where $m_j\in M$.
\end{proof}

\begin{proposition}\label{JJcIIcKK2}
 Given an integrable morphism $\LL\xrightarrow{\omega}\Omega^q_X$ we have the inclusions $\JJ(\omega)\subseteq\II(\omega)$ and $\JJ(\omega) \subseteq \KK(\omega)$.
 If moreover $\omega$ is locally decomposable (\emph{i.e.} if $\EE$ is locally free) then we have $\JJ(\omega)\subseteq\II(\omega)\subseteq \KK(\omega)$.
\end{proposition}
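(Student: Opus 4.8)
The plan is to establish each inclusion locally on a trivializing open set $U$ where $\LL|_U\cong\OO_X$, with local generator $\varpi$ of $\LL(U)$ mapping to a $q$-form in $\Omega^q_X(U)$. For $\JJ\subseteq\II$, I would take a local section $h\in\JJ(U)$; after shrinking $U$ we may write $h=i_\Xi\varpi$ for some $\Xi\in\bigwedge^q T_X(U)$. The Pl\"ucker/integrability relations for $\omega$ give $i_{\Xi'}\varpi\wedge\varpi=0$ for all $\Xi'\in\bigwedge^{q-1}T_X$; contracting the Frobenius identity appropriately (writing $\Xi = v\wedge\Xi'$ and using $i_{v\wedge\Xi'} = i_v\circ i_{\Xi'}$ together with the Leibniz rule for contraction against a wedge) should produce, starting from $0 = i_\Xi(\varpi\wedge d\varpi')$ or the analogous identity from \eqref{frobenius2}, an equation of the form $h\,d\varpi \equiv \sum_j \alpha_j\wedge\omega_j \pmod{\mathcal{E}\otimes\Omega^1_X}$ with the $\omega_j$ generators of $\EE(U)$; this places $h$ in the annihilator of $d(\EE)$ inside $\mathcal{H}^2(\omega)$, i.e. $h\in\II(U)$. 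For $\JJ\subseteq\KK$, the argument is the one already used in Proposition \ref{JJcIIcKK}: $h\in\JJ(U)$ means $\varpi$ itself lies in $\JJ(U)\cdot\Omega^q_X(U)$ (contract against a basis of multivectors), hence $h\,d\varpi$ — in fact $d\varpi$ itself — lies in $\JJ(U)\cdot\Omega^{q+1}_X(U)$, so its class in $\Omega^{q+1}_X\otimes\OO_{\sing}$ is annihilated; actually every element of $\OO_X(U)$ works modulo $\JJ$, so $\JJ\subseteq\KK$ is immediate once one knows $d\varpi\in\JJ\cdot\Omega^{q+1}_X$, which follows since $\varpi=\sum (\text{coefficients in }\JJ)\cdot(\text{basis }q\text{-forms})$ and differentiating keeps everything in the ideal generated by $\JJ$ and $d\JJ$ — here one must be slightly careful and instead argue directly that modulo $\JJ_\pp$ the form $\varpi$ vanishes, hence so does its differential in the conormal-type quotient, giving the claim.

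For the final inclusion $\II\subseteq\KK$ under the hypothesis that $\omega$ is locally decomposable (equivalently $\EE$ locally free), I would work on $U$ small enough that $\varpi=\varpi_1\wedge\dots\wedge\varpi_q$ and $\EE(U)$ is free on $\varpi_1,\dots,\varpi_q$. Take $h\in\II(U)$; by Definition \ref{defII}, for each generator $\varpi_i$ we have $h\,d\varpi_i=\sum_{j=1}^q\alpha_{ij}\wedge\varpi_j$ for suitable $1$-forms $\alpha_{ij}$ (applying the defining condition to each of the finitely many generators $\varpi=\varpi_i$ is not quite right — the condition is stated for all $\varpi\in\EE(U)$, in particular for each $\varpi_i$). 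From $h\,d\varpi_i=\sum_j\alpha_{ij}\wedge\varpi_j$ I would compute $h\,d\varpi = h\,d(\varpi_1\wedge\dots\wedge\varpi_q) = h\sum_i (-1)^{i-1}\varpi_1\wedge\dots\wedge d\varpi_i\wedge\dots\wedge\varpi_q$, substitute, and observe that each term $\alpha_{ij}\wedge\varpi_j$ wedged into the remaining $\varpi_k$'s survives only when $j$ equals the missing index $i$, yielding $h\,d\varpi = \big(\sum_i \pm\alpha_{ii}\big)\wedge\varpi$. Hence $h\,d\varpi\in\Omega^1_X(U)\wedge\varpi = \Omega^1_X(U)\wedge\varpi\subseteq \JJ(U)\cdot\Omega^{q+1}_X(U)$ since $\varpi$ has all its coefficients in $\JJ$; therefore $h$ annihilates $\{d\omega\}$ in $\Omega^{q+1}_X\otimes\OO_{\sing}$, i.e. $h\in\KK(U)$.

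The main obstacle I anticipate is the first step, $\JJ\subseteq\II$, in the genuinely non-decomposable setting: the definition of $\II$ via $\mathcal{H}^2(\omega)=\ker((\omega\wedge-)_{\Omega^2_X})/\EE\otimes\Omega^1_X$ means I must check that $h\,d\varpi$, for $\varpi$ a local generator of $\LL$, actually lands in $\ker((\omega\wedge-)_{\Omega^2_X})$ and then that it is killed modulo $\EE\otimes\Omega^1_X$ after multiplying by $h=i_\Xi\omega$. The first is automatic from integrability (Remark \ref{Edef} and the commuting square defining $\mathcal{H}^2$). For the second, the cleanest route is probably to reduce to the decomposable case on the dense open set where $\omega$ is decomposable and invoke the fact that $\mathcal{H}^2(\omega)$ and the relevant annihilators are determined on that open set because $\EE$ is reflexive and $\sing$ has codimension $\geq 2$ — but making that reduction rigorous for the ideal $\II$ (rather than just for $\EE$) is the delicate point, since $\mathcal{H}^2(\omega)$ need not be torsion-free. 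An alternative, which I would try first, is a direct contraction computation: from $i_{\Xi'}\varpi\wedge\varpi=0$ for all $\Xi'$, apply $d$ and contract, or contract the already-differentiated Frobenius relation, to get $h\,d\varpi\equiv(\text{something in }\mathcal{E}\otimes\Omega^1_X)$; the bookkeeping of signs in $i_{v\wedge\Xi'}(\varpi\wedge d\varpi')$ is the part most likely to be fiddly but should go through, and this is where I would expect to spend the most effort.
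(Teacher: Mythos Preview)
Your argument for $\II\subseteq\KK$ in the locally decomposable case is correct and matches the paper. For $\JJ\subseteq\KK$ you overcomplicate things: $\KK$ is by definition the annihilator of a section of $\Omega^{q+1}_X\otimes\OO_{\sing}\otimes\LL^{-1}$, whose support lies in $\sing$, so $\JJ$ annihilates it trivially. The claim ``in fact $d\varpi$ itself lies in $\JJ\cdot\Omega^{q+1}_X$'' is false in general---its failure is exactly what detects Kupka points.

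The genuine gap is in $\JJ\subseteq\II$. First, your notation conflates the local generator of $\LL$ (a $q$-form) with sections of $\EE$ (the $1$-forms whose differentials $\II$ must kill in $\mathcal{H}^2(\omega)$); the sentence ``$h\,d\varpi$, for $\varpi$ a local generator of $\LL$, lands in $\ker((\omega\wedge-)_{\Omega^2_X})$'' is dimensionally inconsistent. More substantively, once one writes things correctly---contracting $0=d\mu\wedge\omega$ for a $1$-form $\mu\in\EE$ against $v\in\bigwedge^q T_X$ with $h=i_v\omega$---one obtains
\[
0=h\,d\mu+\sum_j (i_{a_j} d\mu)\wedge(i_{b_j}\omega)+\sum_j (i_{c_j} d\mu)\cdot(i_{d_j}\omega),
\]
with $b_j\in\bigwedge^{q-1}T_X$ and $d_j\in\bigwedge^{q-2}T_X$. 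The first sum lies in $\Omega^1_X\wedge\EE$ because $i_{b_j}\omega\in\EE$; the second sum does \emph{not} obviously, since one needs $i_{d_j}\omega\in F^1=\EE\wedge\Omega^1_X$, and no amount of sign bookkeeping yields this. The paper's device is to test the class of $i_d\omega$ in $\Omega^2_X/F^1\cong\bigwedge^2(\Omega^1_X/\EE)$ against bivectors $v_1\wedge v_2$ with each $v_i\in(\Omega^1_X/\EE)^\vee$, i.e., vector fields annihilating $\omega$; the pairing is zero, and torsion-freeness of $\bigwedge^2(\Omega^1_X/\EE)$ then forces the class itself to vanish. Your proposed reduction to the dense decomposable locus would need precisely this torsion-freeness to pull the conclusion back, so it is not an alternative route but the same obstacle in disguise.
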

\begin{proof}
 To ease the notation let us set $\JJ=\JJ(\omega)$, and likewise with $\II$ and $\KK$.
 Let $h$ be a local section of $\JJ$, and by abuse of notation we will call $\omega$ a local generator of the image of the morphism $\omega:\LL\to\Omega^q_X$, then by definition of $\JJ$ there is a local $q$-vector $v\in \bigwedge^{q}T_X$ such that $h=i_v\omega$. Then taking the filtration $\Omega^2_X=F^0\supseteq F^1\supseteq F^2\supseteq F^3=0$ of Lemma \ref{pluckerFiltration} associated to the exact sequence
 \[
  \xymatrix{
  0 \ar[r] & \mathcal{E} \ar[r] & \Omega^1_X\ar[r] & \Omega^1_X\big/\mathcal{E} \ar[r] & 0
  }
 \]
 for $q=2$, we can say that $h$ is in $\II$ if and only if for every local section $\varpi \in \EE$ we have $hd\varpi\in F^1\simeq\mathcal{E}\wedge\Omega^1_X$. To establish this we recall that for every local section $\varpi$ of $\EE$ the equation $d\varpi\wedge\omega=0$ holds. Then contracting with $v$ we get 
 \begin{align*}
  0&= i_v(d\varpi\wedge\omega)=d\varpi\wedge i_v\omega \ +\\
  &\hspace{1cm}+ \sum_{\substack{a_j\in T_X,\ b_j\in \bigwedge^{q-1}T_X\\ a_1\wedge b_1+\dots +a_r\wedge b_r=v\\j=1}}^r i_{a_j} d\varpi\wedge i_{b_j} \omega+ \sum_{\substack{c_j\in \bigwedge^2 T_X,\ d_j\in \bigwedge^{q-2}T_X\\ c_1\wedge d_1+\dots +c_r\wedge d_s=v\\j=1}}^r i_{c_j} d\varpi \wedge i_{d_j}\omega.
 \end{align*}
 To verify that $hd\varpi = d\varpi\wedge i_v\omega \in F^1$ we can see that the last two summands of the above equation are in $F^1$.
 By definition of $\EE$ we have that $i_b \omega$ is a local section of $\EE$, so every summand of the form $i_a d\varpi \wedge i_b \omega$ is in $\Omega^1_X\wedge\EE$.
 Hence, to see that $hd\omega\in F^1$ it suffices to show that $i_d\omega$ is in $F^1$ for every $d\in \bigwedge^{q-2}T_X$. To see this we can calculate the class of $i_d\omega$ in $\Omega^2_X/F^1=F^0/F^1\cong \bigwedge^2(\Omega^1_X/\EE)$. The dual sheaf $(\Omega^1_X/\EE)^\vee \subseteq T_X$ is the distribution defined by $\omega$, that is, is the sheaf of vector fields $V$ such that $i_v\omega=0$. Then, when we evaluate $i_d \omega$ in a section $v_1\wedge v_2\in \bigwedge^2(\Omega^1_X/\EE)^\vee$ we get $0$. As $\bigwedge^2(\Omega^1_X/\EE)$ is torsion-free then the class of $i_d \omega$ in $\Omega^2_X/F^1$ is zero, then $i_d\omega\in F^1$, which means $h d\varpi$ is in $F^1$ as we wanted to show.
 
 \medskip
 
 The second assertion is clear by definition, as $\KK$ is the annihilator of a section whose support is contained in $\sing$.
 
 \medskip
 
 Now suppose $\EE$ is locally free. So we can take local generators $\varpi_1,\dots,\varpi_q$ of $\EE$, this sections verify that $\omega=\varpi_1\wedge\dots\wedge\varpi_q$.
 Then for every section $h$ of $\II$ there are local $1$-forms $\alpha_{ij}\in\Omega^1_X$ such that
 \[
  hd\varpi_i=\sum_{j=1}^q \alpha_{ij}\wedge\varpi_j.
 \]
 Therefore we have
 \begin{align*}
  hd\omega&=d(\varpi_1\wedge\dots\wedge\varpi_q)=\sum_{i=1}^q (-1)^i \varpi_1\wedge\dots\wedge d\varpi_i \wedge \varpi_{i+1}\wedge\dots\wedge\varpi_q=\\
  &=\sum_{i=1}^q (-1)^i \varpi_1\wedge\dots\wedge \left(\sum_{j=1}^q \alpha_{ij}\right)\wedge\varpi_j \wedge \varpi_{i+1}\wedge\dots\wedge\varpi_q=\\
&=\sum_{i=1}^q \alpha_{ii}\wedge \varpi_1\wedge\dots\wedge\varpi_q=\left(\sum_{i=1}^q \alpha_{ii}\right)\wedge\omega.
 \end{align*}
 In particular $hd\omega$ vanishes in $\sing$ so $h$ is in $\KK$. 
\end{proof}

\begin{example}
 Let $\omega\in \Omega^2_{\mathbb{A}^3}$ be like in Example \ref{exampleCodimension2inA3} so we write 
 \[
  \omega= f_3 dx_1\wedge dx_2+f_2 dx_1\wedge dx_3+f_1 dx_2\wedge dx_3.
 \]
So we have 
 \[
  d\omega = \left(\frac{\partial f_3 }{\partial x_3}- \frac{\partial f_2 }{\partial x_2}+ \frac{\partial f_1 }{\partial x_1}\right) dx_1\wedge dx_2\wedge dx_3.
 \]
 For a general choice of the $f_i$'s the restriction $d\omega|_{\sing}$ does not vanish, so $\JJ=\KK$. 
 
 However, by setting for instance $f_3=f_3(x_1,x_2)$, $f_2=f_2(x_1,x_3)$ and $f_1=f_1(x_2,x_3)$, we get a form $\omega$ such that $d\omega=0$. 
 With this choice of $\omega$ we have $\KK=\OO_X$. When computing the ideal $\II$ for this case we need to check that $h d\omega_i= \alpha_{i1}\wedge\omega_1+\alpha_{i2}\wedge\omega_2+\alpha_{i3}\wedge\omega_3$ for $i=1,2,3$, where the $\omega_i$'s are the generators of $\EE$ of Example \ref{exampleCodimension2inA3}
and $h\in \OO_X$.
Further specializing our choice of $\omega$ we can take $f_3=x_1$ and $f_2=x_1+x_3$, in order to get $d\omega_1=dx_1\wedge dx_2 + dx_1\wedge dx_3$, so clearly $1\notin \II(\omega)$.
 
 So we see that there are cases where $\KK=\OO_X$ and $1\notin \II$. This is in stark contrast to the situation in codimension $1$ where, from Theorem \ref{teodivision}, follows that the condition $\JJ=\sqrt{\JJ}$ implies $\sqrt{\II}=\sqrt{\KK}$.\hfill\qedsymbol
\end{example}

Now we present a generalization of Proposition \ref{propc1L=0} to arbitrary codimensions. 

\begin{theorem}\label{teoConnectionE}
 Let $X$ be a projective variety and $\LL\xrightarrow{\omega}\Omega_X^q$ be an integrable $q$-form and $\EE$ be the associated subsheaf of $1$-forms $\EE\subseteq \Omega^1_X$.
 Let $\mathrm{Sym}^2(\EE)$ denote the symmetric power of $\EE$ and suppose $\ext^1_{\OO_X}(\EE,\mathrm{Sym}^2(\EE))=0$. If $\II(\omega)=\OO_X$ then $\EE$ admits a holomorphic connection, in particular is locally free (in other words the foliation is locally decomposable) and every Chern class of $\EE$ vanishes.
\end{theorem}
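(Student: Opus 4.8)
The plan is to reduce everything to the vanishing of the \emph{Atiyah class} $at(\EE)\in\ext^1_{\OO_X}(\EE,\Omega^1_X\otimes\EE)$ of $\EE$. By the standard theory of holomorphic connections, an $\OO_X$-linear splitting of the principal–parts (jet) extension
\[
 0\to\Omega^1_X\otimes\EE\longrightarrow J^1(\EE)\longrightarrow\EE\to 0
\]
is the same as a holomorphic connection on $\EE$, and the class of this extension is $at(\EE)$; moreover a coherent sheaf carrying a holomorphic connection on a smooth variety is locally free, and all its Chern classes are universal polynomials in $at(\EE)$, hence vanish once $at(\EE)=0$. So it suffices to prove $at(\EE)=0$ under the hypotheses.

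The key input is a reformulation of $\II(\omega)=\OO_X$. By Definition \ref{defII} this means that the $\OO_X$-linear map $\EE\to\mathcal{H}^2(\omega)$ induced by the de Rham differential is zero, i.e.\ that every local section $\varpi$ of $\EE$ satisfies $d\varpi\in\Omega^1_X\wedge\EE\subseteq\Omega^2_X$, where $\Omega^1_X\wedge\EE$ denotes the image of $\Omega^1_X\otimes\EE$ under the wedge map $\mu\colon\Omega^1_X\otimes\EE\to\Omega^2_X$, $\alpha\otimes s\mapsto\alpha\wedge s$. Now $d|_\EE\colon\EE\to\Omega^2_X$ is a first–order differential operator whose symbol is exactly $\mu$; hence it corresponds to an $\OO_X$-linear map $D\colon J^1(\EE)\to\Omega^2_X$ whose restriction to the symbol part $\Omega^1_X\otimes\EE\subseteq J^1(\EE)$ is $\mu$, and which on jets recovers $\varpi\mapsto d\varpi$. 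The reformulation of $\II(\omega)=\OO_X$ says precisely that $D$ takes values in the subsheaf $\Omega^1_X\wedge\EE=\operatorname{im}\mu$.

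Next I push the jet extension out along $\mu$. Setting $\mathcal S:=\ker\bigl(\mu\colon\Omega^1_X\otimes\EE\to\Omega^2_X\bigr)$, the pushout of the jet sequence along $\mu$ is an extension
\[
 0\to\Omega^1_X\wedge\EE\longrightarrow J^1(\EE)/\mathcal S\longrightarrow\EE\to 0
\]
whose class is $\mu_*\bigl(at(\EE)\bigr)\in\ext^1_{\OO_X}(\EE,\Omega^1_X\wedge\EE)$. Since $\mathcal S\subseteq\ker D$, the map $D$ descends to $\overline D\colon J^1(\EE)/\mathcal S\to\Omega^1_X\wedge\EE$, and because $D$ restricted to $\Omega^1_X\otimes\EE$ is $\mu$, the map $\overline D$ restricts to the identity on the subsheaf $\Omega^1_X\wedge\EE$. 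Thus $\overline D$ is a retraction, the pushed–out extension splits, and $\mu_*\bigl(at(\EE)\bigr)=0$. From the long exact sequence of $\ext^\bullet_{\OO_X}(\EE,-)$ applied to $0\to\mathcal S\to\Omega^1_X\otimes\EE\xrightarrow{\mu}\Omega^1_X\wedge\EE\to 0$ we conclude that $at(\EE)$ lies in the image of $\ext^1_{\OO_X}(\EE,\mathcal S)\to\ext^1_{\OO_X}(\EE,\Omega^1_X\otimes\EE)$.

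Finally I must identify $\mathcal S$ with $\mathrm{Sym}^2(\EE)$ so as to invoke the hypothesis $\ext^1_{\OO_X}(\EE,\mathrm{Sym}^2(\EE))=0$. The composition $\EE\otimes\EE\hookrightarrow\Omega^1_X\otimes\EE\xrightarrow{\mu}\Omega^2_X$ kills symmetric tensors (as $s\wedge t+t\wedge s=0$), so there is a natural map $\mathrm{Sym}^2(\EE)\to\mathcal S$, and on the open set where $\EE$ and $\Omega^1_X/\EE$ are both locally free — whose complement has codimension $\geq 2$ — this map is an isomorphism (there $\Omega^1_X$ splits locally as $\EE\oplus\QQ$ and $\mathcal S$ is the degree–two symmetric part of $\EE\otimes\EE$). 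After the usual reflexivity bookkeeping this identifies $\mathcal S$ with $\mathrm{Sym}^2(\EE)$, and then $at(\EE)$ comes from $\ext^1_{\OO_X}(\EE,\mathrm{Sym}^2(\EE))=0$, so $at(\EE)=0$ and the theorem follows. I expect this last identification — i.e.\ controlling $\ker\mu$ along the locus where $\EE$ fails to be a subbundle of $\Omega^1_X$ — to be the main obstacle; note that in the codimension–one situation $\EE=\LL$ is a line bundle, $\mathcal S=\LL^{\otimes 2}=\mathrm{Sym}^2(\LL)$ identically, $at(\LL)=c_1(\LL)$, and $\ext^1_{\OO_X}(\LL,\mathrm{Sym}^2(\LL))=H^1(X,\LL)$, so this argument specializes exactly to Proposition \ref{propc1L=0}.
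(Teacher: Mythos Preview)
Your argument is essentially the paper's proof, repackaged in the language of the Atiyah class. Both proofs use the principal--parts sequence $0\to\Omega^1_X\otimes\EE\to J^1(\EE)\to\EE\to 0$, observe that $\II(\omega)=\OO_X$ forces the operator induced by the de~Rham differential to land in $F^1=\Omega^1_X\wedge\EE$, identify $\ker(\Omega^1_X\otimes\EE\to F^1)$ with $\mathrm{Sym}^2(\EE)$, and then invoke the vanishing of $\ext^1_{\OO_X}(\EE,\mathrm{Sym}^2(\EE))$. The paper carries this out in two stages---first lifting $[\nabla]:\mathcal{PE}\to F^1$ to some $\tilde\nabla:\mathcal{PE}\to\Omega^1_X\otimes\EE$, then separately arguing that a lift can be chosen restricting to the identity on $\Omega^1_X\otimes\EE$---whereas your pushout/retraction argument collapses both steps into the single observation that $\mu_*(at(\EE))=0$; this is a genuine streamlining but not a different idea. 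On the point you flag as the main obstacle, the identification $\mathcal S=\mathrm{Sym}^2(\EE)$, the paper simply asserts that the kernel is contained in (the image of) $\EE\otimes\EE$ and hence equals $\mathrm{Sym}^2(\EE)$; the implicit reason is the isomorphism $F^1/F^2\cong(\Omega^1_X/\EE)\otimes\EE$ from Lemma~\ref{pluckerFiltration}, so your caution is reasonable but the paper treats this step as routine rather than as an obstacle.
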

\begin{proof}
 In order to prove the vanishing of the Chern classes of $\EE$ we are going to use Atiyah's classical result \cite[Theorem 4, p.~192]{atiyah} which states that if a holomorphic vector bundle on a compact K\"ahler manifold admits a holomorphic connection, then its Chern classes are all zero.
 We will then produce a holomorphic connection for $\EE$ in this case.
 The condition $\II(\omega)=\OO_X$ implies that for every local section $\varpi$ of $\EE$ we have $d\varpi=\sum_i\alpha_i\wedge\omega_i$ for some local $1$-forms $\alpha_i$ and $\omega_i \in \EE$. In other words, let $F^\bullet$ be the filtration of $\Omega^2_X$ associated with the short exact sequence 
 \[
  0\to \EE\to\Omega^1_X\to \Omega^1_X/\EE\to 0,
 \]
 as in Lemma \ref{pluckerFiltration},by the proof of this lemma the subsheaf $F^1$ is the image of exterior multiplication $\Omega^1_X\otimes\EE\to \Omega^2_X$. Then the de~Rham differential applied to sections of $\EE$ give us a map $d:\EE\to F^1\subseteq \Omega^2_X$ such that $d(f\varpi)=df\wedge\varpi+fd\varpi$, that is a differential operator of order $1$ between $\EE$ and $F^1$. We will call $\mathrm{Diff}^{\leq 1}(A, B)$ the set of differential operators of order $\leq 1$ between two sheaves $A$ and $B$.
 Let us denote with $\mathcal{PE}$ the sheaf of principal parts of $\EE$ of order $1$, see \cite[16.7, p. 36]{egaivIV}, this sheaf is defined by the universal property $\hom_{\OO_X}(\mathcal{PE},M)=\mathrm{Diff}^{\leq 1}(\EE, M)$ for every coherent sheaf $M$, see \cite[Proposition 16.8.4, p. 41]{egaivIV}. So the de~Rham differential defines a coherent sheaves morphism $[\nabla]:\mathcal{PE}\to F^1$. To see if we can lift $[\nabla]$ to a morphism $\nabla:\mathcal{PE}\to \Omega^1_X\otimes\EE$ defining a connection, we first observe that the kernel of the map $\Omega^1_X\otimes\EE\to F^1$ (which is the exterior multiplication map) contains $\mathrm{Sym}^2(\EE)$ and is contained in $\EE\otimes\EE$, so the kernel must be $\mathrm{Sym}^2(\EE)$. Then we have the short exact sequence $0\to\mathrm{Sym}^2(\EE)\to\Omega^1_X\otimes \EE\to F^1\to 0$ which gives an exact sequence of modules
 \begin{align*}
  0 &\to \hom_{\OO_X}(\mathcal{PE},\mathrm{Sym}^2(\EE))\to \hom_{\OO_X}(\mathcal{PE},\Omega^1_X\otimes\EE)\to \hom_{\OO_X}(\mathcal{PE},F^1 )\xrightarrow{\delta} \\
  &\xrightarrow{\delta} \ext^1_{\OO_X}(\mathcal{PE},\mathrm{Sym}^2(\EE))\to \ext^1_{\OO_X}(\mathcal{PE},\Omega^1_X\otimes\EE)\to\cdots
 \end{align*}
 So $[\nabla]$ lifts to a morphism $\mathcal{PE}\to \Omega^1_X\otimes\EE$ if and only if is in the kernel of $ \hom_{\OO_X}(\mathcal{PE},F^1)\xrightarrow{\delta} \ext^1_{\OO_X}(\mathcal{PE},\mathrm{Sym}^2(\EE))$.
 
 In order to compute $\ext^1_{\OO_X}(\mathcal{PE},\mathrm{Sym}^2(\EE))$ recall the short exact sequence of sheaves
 \[
  0\to \Omega^1_X \to \mathcal{P} \to \OO_X \to 0,
 \]
 tensoring with $\EE$ this gives the sequence 
 \[
  0\to \Omega^1_X\otimes\EE \to \mathcal{PE} \to \EE \to 0
 \]
 (notice that the first term in the left is the sheaf $Tor^X_1(\EE,\OO_X)$ which is $0$ as $\OO_X$ is flat over $\OO_X$). 
 The last exact sequence give rise to an exact sequence
\begin{align*}
   \cdots\to \ext^1_{\OO_X}(\EE,\mathrm{Sym}^2(\EE))&\to \ext^1_{\OO_X}(\mathcal{PE},\mathrm{Sym}^2(\EE))\to \\
  &\to\ext^1_{\OO_X}(\Omega^1_X\otimes\EE,\mathrm{Sym}^2(\EE))\to \cdots
\end{align*}
 Recall that the group $\ext^1_{\OO_X}(\mathcal{PE},\mathrm{Sym}^2(\EE))$ can be regarded as the group of isomorphism classes of extensions of $\mathcal{PE}$ by $\mathrm{Sym}^2(\EE)$. Viewed like this, the morphism $\delta: \hom_{\OO_X}(\mathcal{PE},F^1 )\to  \ext^1_{\OO_X}(\mathcal{PE},\mathrm{Sym}^2(\EE))$ evaluated at an element $a\in \hom_{\OO_X}(\mathcal{PE},F^1)$ returns the isomorphism class of the extension $0 \to \mathrm{Sym}^2(\EE)\to A \to \mathcal{PE}\to 0$ where $A$ is the pull-back of the diagram 
 \[
   \xymatrix{
  A \ar[d] \ar[r] & \mathcal{PE}\ar[d]^a \\
  \Omega^1_X\otimes\EE \ar[r] & F^1
 }
 \]
 In particular the composition 
 \[
 \hom_{\OO_X}(\mathcal{PE},F^1 )\xrightarrow{\delta} \ext^1_{\OO_X}(\mathcal{PE},\mathrm{Sym}^2(\EE))\to \ext^1_{\OO_X}(\Omega^1_X\otimes\EE,\mathrm{Sym}^2(\EE))
 \]
 evaluated at the element $[\nabla] \in \hom_{\OO_X}(\mathcal{PE},F^1 )$ returns the isomorphism class of the extension $0\to \mathrm{Sym}^2(\EE) \to B \to \Omega^1_X\otimes\EE$ where $B$ is the pull-back of the diagram
 \[
   \xymatrix{
  B \ar[d] \ar[r] & \Omega^1_X\otimes\EE\ar[d]^{[\nabla]\circ i} \\
  \Omega^1_X\otimes\EE \ar[r] & F^1
 }
 \]
 where $i:\Omega^1_X\otimes\EE\to\mathcal{PE}$ is the canonical immersion.
 
 Now to compute $[\nabla]\circ i:\Omega^1_X\otimes\EE\to F^1$ recall that $[\nabla]$ is defined by being the unique $\OO_X$-linear morphism making the following diagram commute,
 \[
   \xymatrix{
  \EE \ar[d]^{d^{(1)}} \ar[r]^d & F^1 \\
  \mathcal{PE} \ar[ru]_{[\nabla]} & 
 }
 \]
 where $d^{(1)}:\EE\to \mathcal{PE}$ is the universal differential operator of order $1$. Then, as follows from the formulas of \cite[p.~193]{atiyah} explicitly describing the $\OO_X$-module structure of $\mathcal{PE}$, given local sections $f$ of $\OO_X$ and $\varpi$ of $\EE$ we have 
 \[
 [\nabla](df\otimes \varpi) = d(f\varpi)-fd(\varpi).
 \]
 So, $[\nabla]\circ i$ is just the exterior product of forms, hence the sequence $0\to \mathrm{Sym}^2(\EE) \to B \to \Omega^1_X\otimes\EE\to 0$ splits, then the class of $\delta([\nabla])$ in  $\ext^1_{\OO_X}(\Omega^1_X\otimes\EE,\mathrm{Sym}^2(\EE))$ is zero. Therefore $\delta([\nabla])$ is in the image of $\ext^1_{\OO_X}(\EE,\mathrm{Sym}^2(\EE))\to \ext^1_{\OO_X}(\mathcal{PE},\mathrm{Sym}^2(\EE))$. Hence if  $\ext^1_{\OO_X}(\EE,\mathrm{Sym}^2(\EE))=(0)$ then $\delta([\nabla])=0$. 
 So, if $\ext^1_{\OO_X}(\EE,\mathrm{Sym}^2(\EE))=(0)$, then there is a morphism $\mathcal{PE}\to \Omega^1_X\otimes\EE$ lifting $[\nabla]$.

 What we need to prove to conclude is that among the morphisms $\mathcal{PE}\to \Omega^1_X\otimes\EE$, there is one $\nabla:\mathcal{PE}\to \Omega^1_X\otimes\EE$ such that $\nabla|_{\Omega^1_X\otimes \EE}$ is the identity.
 To do this we consider the short exact sequence $0\to \Omega^1_X\otimes\EE\to \mathcal{PE}\to \EE\to 0$ and the exact sequence of $\hom$ groups
 \begin{align*}
  \hom_{\OO_X}(\EE,\Omega_X\otimes\EE)\to\hom_{\OO_X}(\mathcal{PE},\Omega_X\otimes\EE)&\to\hom_{\OO_X}(\Omega_X\otimes\EE,\Omega_X\otimes\EE)\xrightarrow{\delta} \\
  &\xrightarrow{\delta}\ext^1(\EE,\Omega_X\otimes\EE)\to\cdots
 \end{align*}
 The identity is an element $\mathrm{id}\in \hom_{\OO_X}(\Omega_X\otimes\EE,\Omega_X\otimes\EE)$ and we want to show that it is the restriction of some morphism $\nabla:\mathcal{PE}\to  \Omega_X\otimes\EE$, which is equivalent to the condition $\delta(\mathrm{id})=0$. 
 We already know that there is a morphism $\tilde{\nabla}:\mathcal{PE}\to \Omega^1_X\otimes\EE$ lifting $[\nabla]$, so the restriction of $\tilde{\nabla}$ to $\Omega^1_X\otimes\EE$, which we also denote $\tilde{\nabla}$, makes the following diagram commute.
 \[
   \xymatrix{
  0 \ar[r] &  \mathrm{Sym}^2(\EE) \ar[d] \ar[r] & \Omega^1_X\otimes\EE \ar[d]^{\tilde{\nabla}} \ar[r] & F^1 \ar[r] \ar@{=}[d] & 0 \\
  0 \ar[r] &  \mathrm{Sym}^2(\EE)  \ar[r] & \Omega^1_X\otimes\EE  \ar[r] & F^1 \ar[r] & 0
 }
 \]
 Being a restriction we have $\delta(\tilde{\nabla})=0$.
 If we can prove that $\delta(\tilde{\nabla}-\mathrm{id})=0$ then $\delta(\mathrm{id})=0$ and we are set.
 The image of $\tilde{\nabla}-\mathrm{id}$ is in $\mathrm{Sym}^2(\EE)$ so the element $\delta(\tilde{\nabla}-\mathrm{id})\in \ext^1(\EE,\Omega_X\otimes\EE)$ is the class of the extension in the last row of the diagram:
 \[
   \xymatrix{
  0 \ar[r] &  \Omega^1_X\otimes\EE \ar[d]^{\tilde{\nabla}-\mathrm{id}} \ar[r] & \mathcal{PE} \ar[d] \ar[r] & \EE \ar[r] \ar[d] & 0 \\
  0 \ar[r] &  \mathrm{Sym}^2(\EE) \ar[d] \ar[r] & I \ar[d] \ar[r] & \EE \ar[d] \ar[r] & 0 \\
  0 \ar[r] &  \Omega^1_X\otimes\EE \ar[r] & II \ar[r] & \EE \ar[r] & 0
 }
 \]
 Where $I$ and $II$ are push-forwards. If  $\ext^1_{\OO_X}(\EE,\mathrm{Sym}^2(\EE))=(0)$ then $[I]=0\in \ext^1(\EE,\mathrm{Sym}^2(\EE))$ and so $\delta(\tilde{\nabla}-\mathrm{id})=[II]=0\in \ext^1(\EE,\Omega^1_X\otimes\EE)$. 
 So the condition  $\ext^1_{\OO_X}(\EE,\mathrm{Sym}^2(\EE))=(0)$ implies that there is a connection on $\EE$.
\end{proof}

\begin{remark} In the case where $\EE=\LL$ is a line bundle (\emph{i.e.} the codimension $1$ case) we have $\mathrm{Sym}^2(\LL)=\LL^{\otimes 2}$ so $\ext^1(\LL, \mathrm{Sym}^2(\LL))=\ext^1(\LL, \LL^{\otimes 2})=\ext^1(\OO_X, \LL)=H^1(X,\LL)$. So in the codimension $1$ case we recover Proposition \ref{propc1L=0}. 
\end{remark}

\begin{corollary}
Let $X$ be a smooth projective variety over $\CC$ such that every line bundle $\LL$ verifies $H^1(X,\LL)=0$ and such that $Pic(X)$ is torsion-free (\emph{e.g.}: $X$ smooth complete intersection). And let $\LL\xrightarrow{\omega}\Omega^q_X$ be a foliation such that $\EE\cong\bigoplus_i \LL_i$ for some line bundles $\LL_i$. Then $\LL$ has persistent singularities.
\end{corollary}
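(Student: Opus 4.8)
The plan is to argue by contradiction: assume $\II(\omega)=\OO_X$, invoke Theorem~\ref{teoConnectionE} to obtain a holomorphic connection on $\EE$, push it down to the line-bundle summands so as to force $\EE\cong\OO_X^{\oplus q}$, and then observe that this is incompatible with $\EE\hookrightarrow\Omega^1_X$ and $H^0(X,\Omega^1_X)=0$.

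First I would verify the cohomological hypothesis of Theorem~\ref{teoConnectionE} in the present setting. Since $\EE\cong\bigoplus_i\LL_i$ is locally free one has $\mathrm{Sym}^2(\EE)\cong\bigoplus_{i\leq j}\LL_i\otimes\LL_j$, hence
\[
\ext^1_{\OO_X}\!\big(\EE,\mathrm{Sym}^2(\EE)\big)\;\cong\;\bigoplus_{k}\ \bigoplus_{i\leq j}\,H^1\!\big(X,\ \LL_k^{-1}\otimes\LL_i\otimes\LL_j\big).
\]
Each $\LL_k^{-1}\otimes\LL_i\otimes\LL_j$ is a line bundle, so by the standing hypothesis every summand on the right vanishes, and therefore $\ext^1_{\OO_X}(\EE,\mathrm{Sym}^2(\EE))=0$. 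By Theorem~\ref{teoConnectionE}, the assumption $\II(\omega)=\OO_X$ then yields a holomorphic connection $\nabla\colon\EE\to\Omega^1_X\otimes\EE$.

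The key step is to descend $\nabla$ to each summand. Let $j_i\colon\LL_i\hookrightarrow\EE$ and $p_i\colon\EE\twoheadrightarrow\LL_i$ be the structural inclusion and projection of the direct-sum decomposition, and set $\nabla_i:=(\mathrm{id}_{\Omega^1_X}\otimes p_i)\circ\nabla\circ j_i$. A one-line Leibniz computation (the off-diagonal components of $\nabla$ are $\OO_X$-linear, hence do not contribute) shows that $\nabla_i$ is a holomorphic connection on the line bundle $\LL_i$. By Atiyah's theorem \cite[Theorem~4, p.~192]{atiyah} this implies $c_1(\LL_i)=0$ in $H^2(X,\CC)$, i.e. $c_1(\LL_i)$ is a torsion class in $H^2(X,\ZZ)$. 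Since $H^1(X,\OO_X)=0$, the exponential sequence makes $c_1\colon\mathrm{Pic}(X)\to H^2(X,\ZZ)$ injective, so a suitable power $\LL_i^{\otimes n}$ is trivial; as $\mathrm{Pic}(X)$ is torsion-free we conclude $\LL_i\cong\OO_X$ for all $i$.

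Finally I would extract the contradiction. From $\LL_i\cong\OO_X$ we get $\EE\cong\OO_X^{\oplus q}$, so $H^0(X,\EE)\cong\CC^q\neq 0$ because $q\geq 1$. On the other hand, applying the left-exact functor $H^0(X,-)$ to the inclusion $\EE\hookrightarrow\Omega^1_X$ gives an injection $H^0(X,\EE)\hookrightarrow H^0(X,\Omega^1_X)$, while Hodge symmetry together with $H^1(X,\OO_X)=0$ forces $H^0(X,\Omega^1_X)=0$ (the same argument used for the corollary of Proposition~\ref{propc1L=0}). This contradiction shows $\II(\omega)\neq\OO_X$, that is, $\omega$ has persistent singularities. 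The step needing the most care is the descent of the connection to the $\LL_i$ together with the torsion argument in $\mathrm{Pic}(X)$; the remainder is a direct application of Theorem~\ref{teoConnectionE} and the running hypotheses.
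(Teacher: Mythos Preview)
Your proof is correct and follows essentially the same route as the paper: compute $\ext^1_{\OO_X}(\EE,\mathrm{Sym}^2(\EE))$ as a direct sum of $H^1$'s of line bundles, apply Theorem~\ref{teoConnectionE} to obtain a connection on $\EE$, deduce $c_1(\LL_i)=0$ for each summand, use the torsion-free Picard hypothesis to force $\LL_i\cong\OO_X$, and derive a contradiction from $H^0(X,\Omega^1_X)=0$. The only difference is that you make explicit the descent of $\nabla$ to each $\LL_i$ via $\nabla_i=(\mathrm{id}\otimes p_i)\circ\nabla\circ j_i$, whereas the paper passes directly from ``there is a connection on $\EE$'' to ``the Chern classes of $\LL_1,\dots,\LL_q$ are all zero''; your added detail is a welcome justification of that step. (Incidentally, your parenthetical about off-diagonal components being $\OO_X$-linear is not actually needed: the Leibniz rule for $\nabla_i$ follows straight from $p_i\circ j_i=\mathrm{id}$.)
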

\begin{proof}
 As $\EE$ is a direct sum of line bundles the group $\ext^1_X(\EE,\mathrm{Sym}^2(\EE))$ decomposes as
 \[
  \ext^1_X(\EE,\mathrm{Sym}^2(\EE))=\bigoplus_{i}\bigoplus_{j\le k}
  H^1(X,\LL_i^{-1}\otimes \LL_j\otimes\LL_k)=0.
 \]
 So, as $\omega$ is decomposable, if it does not posses persistent singularities then $\II=\OO_X$ so by Theorem \ref{teoConnectionE} there is a connection on $\EE$. This implies that the Chern classes of the line bundle $\LL_1,\dots,\LL_q$ are all zero. Then $\LL_i\cong\OO_X$ for $i=1,\dots,q$, giving global sections of $\Omega^1_X$, contradicting the fact that $H^0(X,\Omega^1_X)=H^1(X,\OO_X)=0$.
\end{proof}

\begin{remark}
 The main theorem of this section gives criteria that, provided strong hypotheses of cohomological nature on the sheaf $\EE$, the absence of persistent singularities implies $\EE$ is a locally free sheaf. The question of when a foliation can be defined by a locally free sheaf $\EE$ is a particularly interesting one. 
 It follows in a similar fashion as in example \ref{exampleCodimension2inA3} that a general codimension $2$ foliation in a $3$ dimensional space (or more generally a dimension $1$ foliation disregarding the dimension of the ambient space) cannot have $\EE$ locally free around an isolated singularity. So restricting a foliation with both persistent and isolated non-persistent singularities (as foliations defined by logarithmic $2$-forms generally are) to the open space $U\subseteq X$ given by the complement of the persitent singularities we would have a foliation with $\II=\OO_X$ and $\EE$ not locally free.
 We do not know, however, of an example where $\II=\OO_X$ and $\EE$ is \emph{not} locally free in the projective case. So the following question arises.
 
 \
 
 \textbf{Question:} Let $X$ be a projective variety. Does $\mathcal{I}=\OO_X$ implies $\EE$ is locally free? Under what hypotheses on $X$ this is true?
 \end{remark}

\

\

\begin{tabular}{l l}
C\'esar Massri$^*$ \hspace{3cm}\null&\textsf{cmassri@dm.uba.ar}\\
Ariel Molinuevo$^\dag$  &\textsf{arielmolinuevo@gmail.com}\\
Federico Quallbrunn$^\ddag$  &\textsf{fquallb@dm.uba.ar}\\
\end{tabular}

\

\begin{tabular}{l l}
&\\
$^*${Departamento de Matem\'atica} & \\
{Pabell\'on I} &\\
{Ciudad Universitaria} &\\
{CP C1428EGA} &\\
{Buenos Aires} &\\
{Argentina} &\\

&\\
$^\dag$Instituto de Matem\'atica \\
Universidade Federal do Rio de Janeiro \\
Caixa Postal 68530 \\
CEP. 21945-970 Rio de Janeiro - RJ \\
BRASIL \\
&\\
\end{tabular}

\

\begin{tabular}{l l}
$^\ddag$ Departamento de Matem\'atica & \\
Universidad CAECE & \\
Av. de Mayo 866 & \\
CP C1084AAQ & \\
{Buenos Aires} &\\
{Argentina} &\\
\end{tabular}

\end{document}